\newcommand\bes{\begin{eqnarray}}
\newcommand\ees{\end{eqnarray}}
\newcommand\R{\mathbb R}
\newcommand{\dd}{\displaystyle}
\newcommand{\lf}{\left}
\newcommand{\rr}{\right}
\newcommand{\rd}{{\rm d }}
\newtheorem{theorem}{Theorem}[section]
\newtheorem{lemma}[theorem]{Lemma}
\newtheorem{example}[theorem]{Example}
\newtheorem{definition}[theorem]{Definition}
\newtheorem{proposition}[theorem]{Proposition}
\numberwithin{equation}{section}
\begin{document}

\title[Rate of accelerated expansion ]{Rate of accelerated expansion of the epidemic region in a nonlocal epidemic model with  free boundaries}

\author[Y. Du, W. Ni and R. Wang]{Yihong Du$^1$, Wenjie Ni$^1$ and Rong Wang$^2$}
\thanks{
$^1$ School of Science and Technology, University of New England, Armidale, NSW 2351, Australia.}
\thanks{\
$^2$ Mathematical Sciences Institute, Australian National University, Canberra, Australia.
}
\thanks{\ \ \
 Emails: ydu@une.edu.au (Y. Du), wni2@une.edu.au (W. Ni), rong.wang1@anu.edu.au (R. Wang) }

\thanks{\ \ \ This research was supported by the Australian Research Council.}

\date{\today}

\maketitle

\begin{abstract}
This paper is concerned with the long-time dynamics of an epidemic model whose diffusion
and reaction terms involve nonlocal effects described by suitable convolution operators, and the epidemic region is represented by an evolving interval enclosed by
 the free boundaries in the model. In Wang and Du \cite{WangDu-JDE}, it was shown that the model is
well-posed, and its long-time dynamical behaviour is governed by a spreading-vanishing dichotomy.
 The spreading speed was investigated in a subsequent work of Wang and Du \cite{WangDu-DCDS-A}, where a threshold condition for the diffusion kernels $J_1$ and $J_2$ was obtained,
 such that the asymptotic spreading speed is finite precisely when this  condition is satisfied. In this paper,
we examine the  case that this  threshold condition is not satisfied, which leads to accelerated spreading; for some typical classes of kernel functions,  we determine the precise rate of accelerated expansion of the epidemic region by constructing delicate upper and lower solutions.   \medskip

\noindent
\textbf{Key Words}: nonlocal diffusion, free boundary, epidemic spreading, rate of acceleration
\smallskip

\noindent
\textbf{AMS Subject Classification (2010)}: 35R09, 35R35, 92D30

\end{abstract}

\section{introduction}

The following system has been investigated   in Wang and Du \cite{WangDu-JDE, WangDu-DCDS-A} as  a model for the spreading of cholera and other epidemics of a similar nature,
\begin{equation}\label{1}
\begin{cases}\displaystyle
u_t=d_1\int_{g(t)}^{h(t)}J_1(x-y)u(y,t)dy-d_1u-a_{11}u&\\
\displaystyle
\hspace{2.7cm}+a_{12}\int_{g(t)}^{h(t)}K(x-y)v(y,t)dy,&t>0,~~x\in(g(t),h(t)),\\
\displaystyle
v_t=d_2\int_{g(t)}^{h(t)}J_2(x-y)v(y,t)dy-d_2v-a_{22}v+G(u),&t>0,~~x\in(g(t),h(t)),\\
\displaystyle
g'(t)=\displaystyle-\mu \int_{g(t)}^{h(t)}\int_{-\infty}^{g(t)}J_1(x-y)u(x,t)dydx&\\
\displaystyle
\hspace{2.6cm}-\mu\rho \int_{g(t)}^{h(t)}\int_{-\infty}^{g(t)}J_2(x-y)v(x,t)dydx,&t>0,\\
\displaystyle
h'(t)=\mu \int_{g(t)}^{h(t)}\int^{\infty}_{h(t)}J_1(x-y)u(x,t)dydx&\\
\displaystyle
\hspace{2.6cm}+\mu\rho \int_{g(t)}^{h(t)}\int^{\infty}_{h(t)}J_2(x-y)v(x,t)dydx,&t>0,\\
u(h(t),t)=u(g(t),t)=v(h(t),t)=v(g(t),t)=0,&t>0,\\
h(0)=-g(0)=h_0,&\\
u(x,0)=u_0(x),~~~v(x,0)=v_0(x),&x\in[-h_0,h_0],
\end{cases}
\end{equation}
where the function $G(u)$ satisfies
\begin{enumerate}
\item[$(G1)$] $G\in C^{2}([0,\infty)),\ G(0)=0,\ G^\prime (z)>0\ \mbox{ for } z\geq 0;$

\item[$(G2)$] $\frac{G(z)}{z}$ is strictly decreasing and $\lim_{z\rightarrow +\infty}\frac{G(z)}{z} < \frac{a_{11}a_{22}}{a_{12}}$.
\end{enumerate}
The kernel functions  $J_1,\, J_2$ and $K$ satisfy
\begin{enumerate}
\item[$\textbf{(J):}$] $J\in C(\R)\cap L^{\infty}(\R)$, $J(x)=J(-x)\geq 0$,
 $J(0)>0$, $\displaystyle\int_{\R}J(x)dx=1\ \mbox{ for } J\in\{J_1, J_2, K\}.$
\end{enumerate}
The parameters $d_1, d_2, a_{11}, a_{12}, a_{22}, \mu, \rho, h_0$ are positive constants, and the initial functions $u_0, v_0$ satisfy
\begin{equation}\label{Assumption}
\left\{\begin{aligned}
& u_0\in C[-h_0,h_0],~~u_0(\pm h_0)=0~~\text{and}~~u_0(x)>0~~\mbox{for}~~x\in (-h_0,h_0),\\
& v_0\in C[-h_0,h_0],~~v_0(\pm h_0)=0~~\text{and}~~v_0(x)>0~~\mbox{for}~~x\in (-h_0,h_0).
\end{aligned}\right.
\end{equation}

In this model, $u(x,t)$ represents the population density of the infectious agent at spatial location $x$ and time $t$, $v(x,t)$ denotes that of the infective humans,
and $[g(t), h(t)]$ stands for the infected spatial region, which evolves with time.
The equations
\begin{equation}\label{hgprime}
\begin{cases}
g'(t)=\displaystyle-\mu \int_{g(t)}^{h(t)}\int_{-\infty}^{g(t)}J_1(x-y)u(x,t)dydx
\displaystyle
-\mu\rho \int_{g(t)}^{h(t)}\int_{-\infty}^{g(t)}J_2(x-y)v(x,t)dydx,\\
\displaystyle
h'(t)=\mu \int_{g(t)}^{h(t)}\int^{\infty}_{h(t)}J_1(x-y)u(x,t)dydx+\mu\rho \int_{g(t)}^{h(t)}\int^{\infty}_{h(t)}J_2(x-y)v(x,t)dydx
\end{cases}
\end{equation}
mean that the expanding rate of the infected region $[g(t),h(t)]$ is proportional to the outward
flux of the agents and infected human population moving out of the boundary of the
range. A detailed explanation of \eqref{hgprime} can be found in \cite{CDLL2019} (these type of free boundary conditions were also proposed independently in \cite{CQW}).

The corresponding ODE problem of
 \eqref{1} was proposed by
Capasso and Paveri-fontana \cite{Capasso1979} (see also Section 4.3.4 of \cite{C})
 to model  
oro-faecal transmitted diseases such as cholera, which has the form
\begin{equation}\label{model*ODE2}
\begin{cases}
u_t=-a_{11}u+a_{12}v,&t>0,\\
v_t=-a_{22}v+G(u),&t>0.\\
\end{cases}
\end{equation}
In this model, the  total human population is assumed to be constant during the epidemic,  $a_{11}$ stands for the natural death rate of the infectious agents,
$a_{12}$ represents the growth rate of the agents contributed by the infective humans, and $a_{22}$ denotes the fatality rate of the infective human population.
The dynamics of \eqref{model*ODE2} is governed by the following threshold parameter, known as the basic reproduction number,
\begin{equation*}
R_0:=\frac{a_{12}G'(0)}{a_{11}a_{22}}.
\end{equation*}
To be precise, if $0<R_0\leq 1$, the unique solution $(u,v)$ of \eqref{model*ODE2} with initial values $u(0),v(0)>0$ goes to $(0,0)$ as $t\to\infty$;
while if $R_0>1$, then \eqref{model*ODE2} admits a unique positive equilibrium $(u^*,v^*)$ and $(u,v)\to (u^*, v^*)$ as $t\to\infty$.
Clearly $(u^*, v^*)$ is uniquely determined by
 \begin{equation*}
 \frac{G(u^*)}{u^*}=\frac{a_{11}a_{22}}{a_{12}},\; v^*=\frac{a_{11}}{a_{12}}u^*.
 \end{equation*}

\medskip

The basic model \eqref{model*ODE2} has been refined and further developed along a number of different lines by many authors (e.g., \cite{Ahn2016, ChangDu2022, dlnz, HKLL, FZ,WangDu-DCDS-B,WLS, XLR, ZLW, zhaomeng2020, zhaomeng2020CPAA,zhaomeng2020JDE, ZW}), and we refer to the introduction section of \cite{WangDu-JDE} for a detailed account.
We note in particular that \eqref{1} is capable of describing the evolution of the epidemic region $[g(t), h(t)]$ as time changes. The dispersal of $u$ in \eqref{1} can be roughly described as following the rule that individuals of $u$ move from spatial location $x$ to $y$ with probability $J_1(x-y)$ and frequency $d_1$ in a unit of time. The dispersal of $v$ is similarly described, with $J_1(x-y)$ and $d_1$ replaced by $J_2(x-y)$ and $d_2$, respectively. The involvement of the kernel function $K$ in the equation for $u$ is caused by the modern way of handling human wastes, which  come from the infected people in a nonlocal manner via the sewage system (see \cite{WangDu-JDE} for more details).

It is proved in \cite{WangDu-JDE} that \eqref{1} has a unique solution $(u,v,g,h)$ defined for all $t>0$, and its long-time dynamics is governed by a spreading-vanishing dichotomy, namely
 one of the following cases must occur as $t\to\infty$,
\begin{itemize}
  \item [{\rm(i)}] \textbf{Vanishing}:
\begin{equation*}
\begin{cases}
  (g(t), h(t)) \mbox{ converges to a finite interval},\\
   (u(x,t),v(x,t))\to (0,0)~~\mbox{uniformly for}~~x\in[g(t),h(t)];
   \end{cases}
\end{equation*}
  \item [{\rm(ii)}]\textbf{Spreading}:
\begin{equation*}
  \begin{cases}
   (g(t), h(t))\to (-\infty, \infty),\\
   (u(x,t),v(x,t))\to (u^*,v^*) ~~\mbox{locally uniformly for $x$ in}~~\R.
   \end{cases}
\end{equation*}
\end{itemize}
Precise criteria governing the ocurrance of these alternatives have also been obtained in \cite{WangDu-JDE}. In particular, if $R_0\leq 1$, then vanishing always happens, and if $R_0>1$, then spreading happens when $h_0$ is greater than a certain threshold number $l_*$; in other words, when $R_0>1$,  the epidemic will spread successfully if the infected region is already over a certain critical size at time $t=0$, regardless of the values of $u_0$ and $v_0$ as long as they satisfy \eqref{Assumption}.

When the case of spreading happens, it is important to know how fast the epidemic spreads in space. This question was addressed in \cite{WangDu-DCDS-A}.
The answer relies on a complete understanding of the associated semi-wave problem of \eqref{1}, which  consists of the following equations in \eqref{semiwave-problem1}
and \eqref{semiwave-problem2}:
\begin{equation}\label{semiwave-problem1}
\begin{cases}
\displaystyle
d_1\int_{-\infty}^{0}J_1(x-y)\varphi_1(y)dy-d_1\varphi_1+c\varphi_1'(x)-a_{11}\varphi_1&\\
\displaystyle
\hspace{5.2cm}+a_{12}\int_{-\infty}^{0}K(x-y)\varphi_2(y)dy=0,&-\infty<x<0,\\
\displaystyle
d_2\int_{-\infty}^{0}J_2(x-y)\varphi_2(y)dy-d_2\varphi_2+c\varphi_2'(x)-a_{22}\varphi_2+G(\varphi_1)=0,&-\infty<x<0,\\
(\varphi_1(-\infty),\varphi_2(-\infty))=(u^*,v^*),~(\varphi_1(0),\varphi_2(0))=(0,0),
\end{cases}
\end{equation}
\begin{equation}\label{semiwave-problem2}
c=\mu\int^0_{-\infty}\int_0^{\infty}J_1(x-y)\varphi_1(x)dydx+
\mu\rho\int^0_{-\infty}\int_0^{\infty}J_2(x-y)\varphi_2(x)dydx.
\end{equation}
Here one seeks a solution triple $(c,\varphi_1,\varphi_2)\in (0,\infty)\times C^1((-\infty,0])\times C^1((-\infty,0])$ satisfying both \eqref{semiwave-problem1} and \eqref{semiwave-problem2}.
It was shown in \cite{WangDu-DCDS-A}  that if such a triple exists, then the spreading speed of \eqref{1} is $c$, and if such a triple does not exist, then the spreading speed of \eqref{1} is $\infty$, namely accelerated spreading happens.
Moreover, it was proved in \cite{WangDu-DCDS-A} that a solution triple to \eqref{semiwave-problem1} and \eqref{semiwave-problem2} exists if and only if the kernel functions $J_1$ and $J_2$ satisfy
\begin{enumerate}
\item[$\textbf{(J1):}$] $\displaystyle\int_0^{\infty}x J(x)dx<\infty$ for $J\in\{J_1, J_2\}$.
\end{enumerate}
More precisely,  the following two theorems were proved in \cite{WangDu-DCDS-A}.
\medskip

\noindent {\bf Theorem A} \cite{WangDu-DCDS-A}.
Suppose that $J_1, J_2, K$ satisfy {\bf (J)}, $G$ satisfies $(G1)-(G2)$, and $R_0>1$.  Then \eqref{semiwave-problem1} and \eqref{semiwave-problem2} possess a solution triple $(c,\varphi_1,\varphi_2)\in (0,\infty)\times C^1((-\infty,0])\times C^1((-\infty,0])$
with $\varphi_1$ and $\varphi_2$ non-increasing if and only if $J_1$ and $J_2$ satisfy {\bf (J1)}. Moreover, when {\bf (J1)} holds, the solution triple is unique and $\varphi_i'(x)<0$ for $x\leq 0$, $i=1,2$; such a unique solution triple is denoted by $(c_0, \varphi_1^{c_0},\varphi_2^{c_0})$.
\smallskip

\noindent {\bf Theorem B}  \cite{WangDu-DCDS-A}.
Suppose that $J_1, J_2, K$ satisfy {\bf (J)}, $G$ satisfies $(G1)-(G2)$,  and spreading happens. Then
\[
\begin{cases}
\lim_{t\to\infty}\frac{h(t)}t=-\lim_{t\to\infty}\frac{g(t)}t=c_0 & \mbox{ if  $J_1$ and $J_2$ satisfy {\bf (J1)}},\\
\lim_{t\to\infty}\frac{h(t)}t=-\lim_{t\to\infty}\frac{g(t)}t=\infty & \mbox{ otherwise},\\
\end{cases}
\]
where $c_0$ is given by Theorem A.

\medskip

The main purpose of this paper is to obtain more precise information on the expansion rate of the epidemic region $[g (t), h(t)]$ in the case $\lim_{t\to\infty}\frac{h(t)}t=-\lim_{t\to\infty}\frac{g(t)}t=\infty$, namely when the threshold condition {\bf (J1)} does not hold while spreading happens.

As expected, the rate of spreading depends crucially on the behaviour of the kernel functions near infinity.
For a constant $\alpha>0$, we define condition $(\mathbf{P_{\alpha}})$ for a kernel function $J(x)$  by
\begin{enumerate}
\item[$(\mathbf{P_{\alpha}})$:]\ \ \  $J(x)\approx |x|^{-\alpha}$  for $|x|\gg1$.
\end{enumerate}
Here, and in what follows,
\[\mbox{$\alpha(x)\approx \beta(x)$ means $C_1 \beta(x)\leq \alpha (x)\leq C_2\beta(x)$ for some positive constants $C_1$ and $C_2$,}
\]
 and for all $x$ in the range of concern.

We will determine the rate of acceleration of the spreading for \eqref{1} when the dominating one  of the two kernel functions $J_1$ and $J_2$ satisfies $(\mathbf{P_{\alpha}})$.
Clearly, for a continuous kernel function $J$ satisfying  $(\mathbf{P_{\alpha}})$,  it satisfies  {\bf (J)}  only if  $\alpha>1$, and it satisfies {\bf (J1)} only if $\alpha>2$. Thus such a kernel function $J$  satisfies {\bf (J)} but not {\bf (J1)} precisely when  $\alpha\in (1,2]$.
Consequently, if at least one of the two kernel functions $J_1$ and $J_2$ in \eqref{1} satisfies  $(\mathbf{P_{\alpha}})$ with $\alpha\in (1,2]$, then
\begin{align*}
	\int_0^{\infty}x J_1(x)dx=\infty\ \ {\rm or}\ \ \int_0^{\infty}x J_2(x)dx=\infty,
\end{align*}
and  by Theorem B, accelerated spreading will happen (whenever the spreading alternative occurs to \eqref{1}).

We say  $J_1$ is {\bf dominating} among the two functions $J_1$ and $J_2$ if  there is a constant $C> 0$ such that
\begin{align*}
	J_2(x)\leq C J_1(x) \mbox{ for } x\in\R.
\end{align*}
We define $J_2$ dominating analogously.
\medskip

Our  main result of this paper is the following theorem.
\begin{theorem}\label{th1}
Suppose that  $(\mathbf{J})$ holds,  there is a dominating kernel function among $J_1$ and $J_2$, and  this dominating kernel  satisfies $(\mathbf{P_{\alpha}})$
for some $\alpha\in (1,2]$. Then, whenever spreading happens to \eqref{1}, we have, for  $t\gg 1$,
\begin{equation*}
-g(t),h(t)\approx \begin{cases}\ t^{\frac{1}{\alpha-1}}&{\rm if}\ \alpha\in(1,2),\\
\  t\ln t&{\rm if}\ \alpha=2.
\end{cases}\end{equation*}
\end{theorem}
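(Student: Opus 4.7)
My plan is to prove the matching two-sided estimate by constructing upper and lower solutions in the sense of the comparison principle of \cite{WangDu-JDE}. Without loss of generality assume $J_1$ is the dominating kernel, so $J_2(x) \leq C J_1(x)$ on $\R$ and $J_1$ itself satisfies $(\mathbf{P_\alpha})$. The condition $(\mathbf{P_\alpha})$ yields the tail estimates
\[\int_R^\infty \!J_1(s)\,ds \approx R^{1-\alpha},\qquad \int_0^R \!s\, J_1(s)\,ds \approx \begin{cases} R^{2-\alpha} & \alpha \in (1,2),\\ \ln R & \alpha = 2,\end{cases}\]
for $R\gg 1$, which will be used throughout. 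By the spreading dichotomy $(u,v)\to(u^*,v^*)$ locally uniformly, so $u,v$ are bounded by some constant $M$. I will work with $h(t)$; the estimate for $-g(t)$ follows by the same argument.

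\emph{Upper bound.} Setting $L(t) := h(t)-g(t)$, the change of variable $s=y-x$ in the free-boundary equation in \eqref{hgprime} combined with $u,v\leq M$ and $J_2\leq CJ_1$ gives
\[h'(t) \leq C_1\Big(\int_0^{L(t)} s J_1(s)\,ds + L(t)\int_{L(t)}^\infty J_1(s)\,ds\Big),\]
and a symmetric bound for $-g'(t)$. The tail estimates bound the right side by $C_2 L(t)^{2-\alpha}$ when $\alpha\in(1,2)$ and by $C_2\ln L(t)$ when $\alpha=2$. Adding, $L'(t)\leq 2C_2 L(t)^{2-\alpha}$ (resp. $2C_2\ln L(t)$), which integrates to $L(t)\leq C t^{1/(\alpha-1)}$ (resp. $Ct\ln t$) for $t\gg 1$, and therefore controls both $h(t)$ and $-g(t)$ from above.

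\emph{Lower bound: construction.} After shifting time using the spreading alternative, we may assume $u(\cdot,0),v(\cdot,0)\geq \eta$ on an interval $[-\ell_0,\ell_0]$ with $\ell_0$ as large as needed. I then seek a sub-solution triple $(\underline u,\underline v,\underline h)$ (taking $\underline g=-\underline h$) where $\underline u,\underline v$ equal small positive constants $\eta_1,\eta_2$ on $[-\underline h(t)+\ell,\underline h(t)-\ell]$ for a fixed $\ell>0$, are smoothly cut off to $0$ near $\pm \underline h(t)$, and the free boundary is
\[\underline h(t) = \kappa t^{1/(\alpha-1)}\ (\alpha\in(1,2)),\qquad \underline h(t)=\kappa t\ln t\ (\alpha=2),\]
with $\kappa>0$ small. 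The lower analogue of the tail computation above gives
\[\mu\int_{-\underline h}^{\underline h}\!\int_{\underline h}^\infty\! J_1(x-y)\underline u(x,t)\,dy\,dx + \mu\rho \int_{-\underline h}^{\underline h}\!\int_{\underline h}^\infty\! J_2(x-y)\underline v(x,t)\,dy\,dx \geq c_3\eta_1 \underline h(t)^{2-\alpha}\]
when $\alpha\in(1,2)$, and $\geq c_3\eta_1\ln\underline h(t)$ when $\alpha=2$; in either case $\underline h'(t)$ is dominated by the right-hand side provided $\kappa$ is chosen small enough, so the free-boundary inequality for a sub-solution holds.

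\emph{Main obstacle.} The delicate step is to simultaneously satisfy the reaction--diffusion sub-inequalities for $\underline u$ and $\underline v$. In the constant-interior region they reduce to the algebraic pair $-a_{11}\eta_1+a_{12}\eta_2\geq 0$, $-a_{22}\eta_2+G(\eta_1)\geq 0$, whose joint solvability for small $\eta_1,\eta_2$ is exactly the condition $R_0>1$. The real difficulty lies in a neighbourhood of $\pm\underline h(t)$: the nonlocal diffusion term $d_1\int_{-\underline h}^{\underline h}J_1(x-y)\underline u(y,t)\,dy-d_1\underline u(x,t)$ becomes strongly negative because the convolution is truncated, and time differentiation of $\underline u(x,t)=\eta_1\psi(x/\underline h(t))$ produces an extra term proportional to $\underline h'(t)/\underline h(t)$ whose sign varies. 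I anticipate resolving this by taking $\psi$ to be the principal eigenfunction of the linearised nonlocal cooperative system on a large but bounded reference interval, so that the PDE inequality holds with a positive margin in the bulk, and then exploiting the ``slowly varying'' property $\underline h'(t)/\underline h(t)=o(1)$ as $t\to\infty$, valid for both profiles, to absorb the transport term into this margin. Once the sub-solution is established, the comparison principle from \cite{WangDu-JDE} delivers $h(t)\geq \underline h(t)$ for $t\gg 1$, completing the lower bound and hence the theorem.
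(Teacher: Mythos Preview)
Your upper-bound argument is essentially the paper's: both use the uniform bound on $(u,v)$ and the tail estimates of the dominating kernel to control $h'(t)$ directly (the paper packages this as a constant upper solution $(\bar u,\bar v)=(Mu^*,Mv^*)$, but the calculus is the same).

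The lower bound, however, has a real gap in the boundary-layer step. First, the description is internally inconsistent: a cutoff of fixed width $\ell$ gives $\underline u_t\sim \underline h'(t)/\ell$, which blows up, while a scaled profile $\psi(x/\underline h(t))$ gives a layer of width $\sim\underline h(t)$, not $\ell$. More seriously, neither of your two proposed fixes works. The principal eigenfunction of the nonlocal operator on a bounded interval is strictly positive at the endpoints, so it cannot serve as a sub-solution satisfying $\underline u(\pm\underline h(t),t)=0$; the paper notes exactly this and builds instead ``principal sub-eigenfunctions'' that vanish at $\pm L$ (Section~2). And the observation $\underline h'/\underline h=o(1)$ controls the transport term only on compact subsets of $(-1,1)$ in the scaled variable: near the edge, if $\psi$ vanishes to order $\lambda\geq 1$ then $\underline u_t/\underline u\sim \underline h'(t)/(\underline h(t)-|x|)$, which diverges as $|x|\to\underline h(t)$ regardless of how small $\underline h'/\underline h$ is. So the margin from $R_0>1$ cannot absorb it.

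The paper resolves this with two ideas you are missing. It takes \emph{asymmetric} profiles: the component paired with the dominating kernel (say $\underline v$ when $J_2$ dominates) carries an extra factor $\Psi_1$ that plateaus in a thin zone $\{|x|\geq\psi_1(t)\}$ near the boundary, so that there $\underline v$ vanishes one order lower than $\underline u$ and can absorb $\underline u_t$. The price is that $\underline v_t$ then cannot be controlled by $\underline u$ or $\underline v$; instead one shows, using the fat tail $J_2(x)\approx|x|^{-\alpha}$, that the nonlocal term $\int_{-\underline h}^{\underline h}J_2(x-y)\underline v(y,t)\,dy$ is bounded below by $c\,\underline h(t)^{1-\alpha}$ near the edge and dominates $\underline v_t$. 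This use of the heavy tail \emph{inside} the PDE inequality (not just in the free-boundary flux) is the key mechanism, and it also explains why the construction is genuinely different depending on whether $J_1$ or $J_2$ dominates --- the system is not symmetric in $u$ and $v$ because of the nonlocal $K$-term versus the local $G(u)$.
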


\noindent{\bf Remarks:}
\begin{itemize}
\item[(i)] In Theorem \ref{th1}, if  $\alpha\in (1, 2]$ is replaced by $\alpha>2$, then  problem \eqref{1}  has  finite spreading speed (by Theorem B). 
Note also that $R_0>1$ is a necessary condition for spreading to happen. Therefore, throughout this paper, we always assume that \[R_0>1.\]
\item[(ii)] We note that in Theorem \ref{th1}, the rate of acceleration is determined by the dominating kernel alone.
The non-dominating kernel  need not satisfy $(\mathbf{P_{\beta}})$ for some $\beta>0$; for example it may have compact support.
\item[(iii)] Our estimates on the acceleration rate of spreading of \eqref{1} extend those in \cite{DN2021}, where a single species model of Fisher-KPP type was considered.   To extend the estimates to a system setting with very different nonlinearities, we have to develop new techniques and ideas in this paper.
\item[(iv)] In \cite{dn2022} a rather general cooperative system of nonlocal equations with free boundaries were considered. As already pointed out in \cite{WangDu-JDE, WangDu-DCDS-A}, due to the nonlocal reaction term involving the kernel function $K(x)$ in \eqref{1}, the theory of \cite{dn2022} does not cover the model here.
\item[(v)] Let us note that the kernel function $K(\cdot)$ does not affect the rate of acceleration in Theorem \ref{th1}. However, we conjecture that in Theorem \ref{th1}, 
\[
-g(t),\ h(t)=\begin{cases} [c_\alpha+o(1)]\,t^{\frac 1{\alpha-1}} & \mbox{ if } \alpha\in (1,2),\\
[c_\alpha+o(1)]\, t\ln t & \mbox{ if } \alpha=2,
\end{cases}
\]
as $t\to\infty$ for some positive constant $c_\alpha$ which depends on $K(\cdot)$.
\item[(vi)] For the Fisher-KPP model without free boundary, rate of accelerated spreading has been obtained for some classes of diffusion kernels which are not thin tailed, see, for example, \cite{Gsiam2011}; see also \cite{CR} for the case that the nonlocal diffusion term is given by the fractional Laplacian $(-\Delta)^s$, $s\in (0,1)$.
\end{itemize}

\section{Principal sub-eigenfunctions}

In this section, we find some easily checked sufficient conditions for a function $\phi=\phi_L\in C([-L, L])$  to satisfy
\begin{align}\label{2.1a}
	\begin{cases} \displaystyle \mathcal{L}[\phi](x):=\int_{-L}^{L}J(x-y)\phi (y)\rd y\geq (1-\epsilon) \phi(x) \mbox{ for } x\in [-L, L] ,\\
	\phi(x)>0 \mbox{ for } x\in [-L,L]\ \ {\rm and}\ \ \phi(\pm L)=0,
	\end{cases}
\end{align}
 where $\epsilon>0$ is small and converges to 0 as $L\to\infty$.
Such functions (with $J\in\{J_1, J_2, K\}$) will be used as a building block in our construction of lower solutions  of \eqref{1}; they are also of independent interest.\footnote{\ A special $\phi(x)$ satisfying \eqref{2.1a} was constructed and used in \cite{DuNi2020N}}  We will call such a function $\phi$ a {\bf principal sub-eigenfunction}
of the operator $\mathcal L$ generated by the kernel function $J$ in \eqref{2.1a}.\footnote{\ Let us note that, if the kernel function $J$ satisfies condition {\bf (J)}, then the principal eigenfunction of $\mathcal L$  satisfies \eqref{2.1a} except that it is positive at $x=\pm L$. 
 Indeed, it is well known that the operator $\mathcal L_L[\phi]:=\mathcal L[\phi]-\phi$ has a principal eigenpair $(\lambda_L, \phi_L)$ and $\lambda_L\to 0$ as $L\to\infty$; see \cite{CDLL2019}. Therefore 
\[\mathcal{L}[\phi_L](x)=(1+\lambda_L)\phi_L(x)\geq (1-\epsilon)\phi_L(x) \mbox{ in } [-L, L] \mbox{ for all } L\gg 1.
\]
}

\begin{definition}\label{def2.1}  Let $\Omega$ be an open set in $\R$ and  $x_0\in  \overline\Omega$. A function $f\in C (\overline  \Omega)$  is called {\bf asymptotically convex} at $x_0$ if there is    a convex function $f_1\in C (\overline  \Omega)$ such that
	\begin{align*}
		f(x)=f_1(x)+o(f_1(x)) \mbox{ as } x\to x_0.
	\end{align*}
\end{definition}

Clearly,  $\phi\in C([-1,1])$  is asymptotically convex at $x=1$ if one of following holds:
\begin{itemize}
	\item[{\rm (a)}]  there are constants $C_1> 0$ and  $k\geq 1$ such that
	\begin{align*}
		\phi(x)=C_1(1-x)^{k}+o((1-x)^{k}) \mbox{ as } x\to 1;
	\end{align*}
\item[{\rm (b)}] for some integer $k\geq 1$ and  small $\rho>0$,  $\phi\in C^{k}([1-\rho,1])$ and
\begin{align*}
	\phi^{(i)}(1)= 0 \mbox{ for } i=0,..., k-1, (-1)^k\phi^{(k)}(1)>0,
\end{align*}
 where  $\phi^{(i)}$ denotes the $i$-{th} derivative of $\phi$.
\end{itemize}

\begin{proposition}\label{prop2.1}
	Let  $J$ satisfy {\rm \textbf{(J)}},  and $ \psi$ be Lipschitz continuous over $[-1, 1]$, positive in  $(-1,1)$, and $\psi(\pm 1)=0$. If
	\begin{align}\label{2.2a}
		\psi\ is\  asymptotically\ convex\ at \ x=\pm 1,
	\end{align}
 then for any $\epsilon>0$, there exists $r_0=r_0(\epsilon,J,\psi)>0$ such that for every $r\in (0,r_0)$,  $L:=1/r$ and $\phi(x):=\psi(rx)$, \eqref{2.1a} holds.
\end{proposition}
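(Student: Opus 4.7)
The plan is to reduce, via a scaling substitution, the inequality $\mathcal{L}[\phi](x) \geq (1-\epsilon)\phi(x)$ to a convolution inequality on the fixed interval $[-1,1]$, then treat the interior by Lipschitz continuity and the boundary layer near $x = \pm 1$ by combining the asymptotic convexity with Markov's inequality. Setting $s := rx$, $z := ry$, and $J_r(u) := r^{-1}J(u/r)$, the claim becomes
\[
\int_{-1}^{1} J_r(z-s)\psi(z)\,dz \geq (1-\epsilon)\psi(s),\qquad s\in[-1,1],
\]
where $J_r$ is a symmetric probability density concentrating at $0$ as $r\to 0$. For $s \in [-1+\eta,\,1-\eta]$ with any fixed small $\eta>0$, $\psi(s) \geq c_\eta > 0$, and Lipschitz continuity of $\psi$ together with the concentration of $J_r$ gives $\int_{-1}^{1} J_r(z-s)\psi(z)\,dz = \psi(s)+o(1)$ uniformly as $r\to 0$, yielding the inequality for small $r$.

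The substantive step is the boundary layer; take $s\in[1-\eta,1]$ (the case near $-1$ being analogous). Using the asymptotic convexity at $x=1$, choose $\rho > 0$ small so that $\psi(z) \geq (1-\epsilon_1)\psi_1(z)$ on $[1-\rho,1]$, where $\psi_1$ is convex on $[-1,1]$ with $\psi_1(1)=0$ and $\psi_1\geq 0$ on $[1-\rho,1]$, and $\epsilon_1\to 0$ as $\rho\to 0$. Dropping the nonneg piece $\int_{-1}^{1-\rho}$ and applying the subgradient inequality $\psi_1(z) \geq \psi_1(s) - m(s)(z-s)$ valid for all $z \in [-1,1]$ with some $m(s) \geq 0$ (the negative of a subgradient of $\psi_1$ at $s$), one arrives, after the substitution $w = z - s$ and using the symmetry of $J$, at
\[
\int_{-1}^{1} J_r(z-s)\psi(z)\,dz \geq (1-\epsilon_1)\Bigl[\psi_1(s)\bigl(1 - B(\sigma)\bigr) + m(s)\,r\int_\sigma^\infty v J(v)\,dv\Bigr] + o(1),
\]
where $\sigma := (1-s)/r$ and $B(\sigma) := \int_\sigma^\infty J(u)\,du$.

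The crux is Markov's inequality $\int_\sigma^\infty v J(v)\,dv \geq \sigma B(\sigma)$, combined with the subgradient inequality evaluated at $z = 1$: since $\psi_1(1) = 0$, one has $m(s)(1-s) \geq \psi_1(s)$, and hence $m(s)\,r\sigma = m(s)(1-s) \geq \psi_1(s)$. Multiplying gives $m(s)\,r\int_\sigma^\infty v J(v)\,dv \geq \psi_1(s)\,B(\sigma)$, which precisely cancels the deficit $B(\sigma)$ in the main term, producing the lower bound $(1-\epsilon_1)\psi_1(s)(1-o(1)) \geq (1-\epsilon)\psi(s)$ for $\epsilon_1$ and $r$ sufficiently small. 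The hard part is exactly this compensation: the mass lost to truncating the convolution onto $[-1,1]$ is recovered from the subgradient correction, and the asymptotic convexity (rather than, say, concavity) is precisely what makes this correction have the right sign; this is also why Lipschitz continuity alone fails in the boundary layer while the asymptotic convexity succeeds.
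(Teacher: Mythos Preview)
Your approach is close to the paper's: both split the verification into an interior region (handled by Lipschitz continuity and the positive lower bound on $\psi$) and a boundary layer near $s=\pm 1$ (handled by convexity). The difference is in how convexity is exploited. The paper fixes $L_0$ with $\int_{-L_0}^{L_0}J\ge 1-\epsilon/2$, extends $\psi_1$ by zero past $1$, symmetrizes the integral to $\int_0^{L_0}J(y)[\psi_1(rx+ry)+\psi_1(rx-ry)]\,dy$, and then applies the midpoint inequality $\psi_1(a)+\psi_1(b)\ge 2\psi_1((a+b)/2)$ directly; this avoids any first-moment condition on $J$. Your subgradient-plus-Markov route is equivalent in spirit, but two points need repair.

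First, you write $\int_\sigma^\infty v\,J(v)\,dv$, which can diverge under \textbf{(J)} alone and \emph{does} diverge for the kernels central to the paper (those with $(\mathbf{P}_\alpha)$, $\alpha\in(1,2]$); so as written the displayed inequality is false in the cases of interest. The range coming out of the substitution is actually $[\sigma,A]$ with $A:=(s-1+\rho)/r$, and then $\int_\sigma^{A} vJ(v)\,dv\ge\sigma\int_\sigma^{A} J(v)\,dv$ together with $m(s)r\sigma=m(s)(1-s)\ge\psi_1(s)$ gives the same cancellation. Second, the additive ``$+\,o(1)$'' cannot remain additive, since $\psi_1(s)\to 0$ as $s\to 1$; the actual error is $\psi_1(s)\cdot B(A)$ with $B(A)\to 0$, i.e.\ multiplicative, and indeed your concluding line $(1-\epsilon_1)\psi_1(s)(1-o(1))$ already has the correct form. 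With these two corrections the argument is complete and equivalent to the paper's; the paper's midpoint formulation is slightly cleaner because working on a symmetric window $[-L_0,L_0]$ from the outset makes the first-moment issue disappear automatically.
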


\begin{proof} Clearly we only need  to show
	\begin{align}\label{2.3a}
		\int_{-L}^{L}J(x-y)\phi (y)\rd y\geq (1-\epsilon) \phi(x),\ \ \ x\in [-L,L].
	\end{align}
	Since $||J||_{L^1}=1$, there is $L_0>0$ depending only on  $J$ and $\epsilon$ such that
	\begin{align}\label{2.4a}
		\int_{-L_0}^{L_0}J(x) \rd x\geq 1-\epsilon/2.
	\end{align}
	By the Lipschitz continuity of $\psi$, there is $M>0$ such that,
	\begin{align*}
		|\psi(x)-\psi(y)|\leq M|x-y|,\ \ \ |x|\leq 1,\  |y|\leq 1,
	\end{align*}
	and so
	\begin{align}\label{2.5a}
		|\phi(x)-\phi(y)|=|\psi(rx)-\psi(ry)|\leq Mr |x-y|,\ \ \ |x|\leq L,\ |y|\leq L.
	\end{align}

	{\bf Step 1}.  Extension of $\psi$
	
	By \eqref{2.2a} and Definition \ref{def2.1},   we can find a small constant $\rho>0$,   a convex function $\psi_1\in C ([1-\rho,1])$ and a function $\psi_2\in C ([1-\rho,1])$ such that
	\begin{align}\label{2.7a}
		\psi(x)=\psi_1(x)+\psi_2(x) \mbox{ for } x\in  [1-\rho,1]\ \ {\rm and}\ \  \lim_{x\nearrow 1}\frac{\psi_2(x)}{\psi_1(x)}=0,\ \  \
	\end{align}
Since $\psi(x)>0$ in $(-1,1)$, by shrinking $\rho$ if needed, we have
\begin{align*}
	\psi_1(x)> 0,  \ \ x\in [1-\rho,1),
\end{align*}
   and hence, if we extend $\psi_1$ by $\psi_1(x)=0$ for $x>1$, then
 \begin{align*}
 	\psi_1(x)\mbox{ is convex for } x\in [1-\rho,\infty).
 \end{align*}
   Similarly,  we define  $\psi(x)\equiv 0$ and $\psi_2(x)\equiv 0$ for $x>1$.

	{\bf Step 2}. Verification of \eqref{2.3a}.
	
	We will only verify \eqref{2.3a} for $x\in [0, L]$, as the case $x\in [-L, 0]$ is parallel.
	We will divide the proof into two cases: (a) $x\in \lf[0, \frac{(1-\rho_1/2)}{r}\rr]$ and  (b) $x\in \lf[\frac{(1-\rho_1/2)}{r}, \frac{1}{r}\rr]$, where $\rho_1\in (0,\rho)$  depends on $\epsilon$ but not on $r$ (see the estimate of $A_2$ in Case (b) below).
	
	{\bf Case (a)}. For
	\begin{align*}
		x\in \lf[0, \frac{(1-\rho_1/2)}{r}\rr]=\lf[0, L-\frac{\rho_1}{2r}\rr],
	\end{align*}
	a direct calculation gives
	\begin{align*}
		\int_{-L}^{L}J(x-y)\phi(y)\rd y=\int_{-L-x}^{L-x}J(y)\phi (x+y)\rd y\geq \int_{-L_0}^{L_0}J(y)\phi (x+y)\rd y,
	\end{align*}
	where $L_0$ is given by \eqref{2.4a} and we have used
	\[
	L-x=\frac 1r-x\geq \frac{1}{r}[1-(1-\rho_1/2)]=\frac{\rho_1}{2r}>L_0 \mbox{ provided that } 0<r<\frac{\rho_1}{2L_0}.
	\]
	Then by \eqref{2.4a} and \eqref{2.5a},
	\begin{align*}
		&\int_{-L_0}^{L_0}J(y)\phi (x+y)\rd y\\
		=&\ \int_{-L_0}^{L_0}J(y)\phi (x)\rd y+\int_{-L_0}^{L_0}J(y)[\phi (x+y)-\phi (x)]\rd y\\
		\geq &\ \int_{-L_0}^{L_0}J(y)\phi (x)\rd y-Mr\int_{-L_0}^{L_0}J(y)|y|\rd y\\
		\geq &\ (1-\epsilon/2)\phi(x)-MrL_0.
	\end{align*}
	Denote
	\begin{align*}
		M_1:=\min_{x\in [0,(1-\rho_1/2)/r]}\phi(x)=\min_{x\in [0,1-\rho_1/2]}\psi(x),
	\end{align*}
	which is independent of $r$.  Then from the above calculations we obtain,   for $ x\in [0, (1-\frac{\rho_1}2)/r]$,
	\begin{align*}
		&\int_{-L}^{L}J(x-y)\phi (x)\rd y\geq (1-\epsilon/2)\phi (x)-MrL_0\\
		=&\ (1-\epsilon)\phi (x)+\epsilon \phi (x)/2  -MrL_0\\
		\geq&\ (1-\epsilon)\phi (x)+ \epsilon M_1/2-MrL_0\\
		\geq &\ (1-\epsilon)\phi (x),
	\end{align*}
provided that
\begin{align}\label{2.8a}
	0<r\leq \min\left\{\frac{\epsilon M_1}{2ML_0},\frac{\rho_1}{2L_0}\right\}.
\end{align}

	{\bf Case (b)}. For
	\begin{align*}
		x\in \lf[\frac{(1-\rho_1/2)}{r}, \frac{1}{r}\rr]=\lf[L-\frac{\rho_1}{2r}, L\rr],
	\end{align*}
	we have, using $-L-x<-L_0$ and $\phi(x)=0$ for $x\geq L$,
	\begin{align*}
		\int_{-L}^{L}J(x-y)\phi (y)\rd y\geq \int_{-L_0}^{\min\{L_0, L-x\}}J(y)\phi (x+y)\rd y=\int_{-L_0}^{L_0}J(y)\phi (x+y)\rd y.
	\end{align*}
 Then it follows from \eqref{2.7a} and $\phi(s)=\psi(rs)$ that
\begin{align*}
		&\int_{-L_0}^{L_0}J(y)\phi (x+y)\rd y\\
		= & \int_{-L_0}^{L_0}J(y)\psi_1 (rx+ry)\rd y+\int_{-L_0}^{L_0}J(y)\psi_2 (rx+ry)\rd y\\
		=&\int_{0}^{L_0}J(y)[\psi_1 (rx+ry)+\psi_1 (rx-ry)]\rd y+\int_{-L_0}^{L_0}J(y)\psi_2 (rx+ry)\rd y.
\end{align*}
Since $\psi_1(s)$ is convex for $s\geq 1-\rho$, and for $x\in \lf[\frac{(1-\rho_1/2)}{r}, \frac{1}{r}\rr]$ and $y\in [0, L_0]$  we have
\[\mbox{ $rx+ry\geq rx-ry\geq 1-\rho_1/2-rL_0>1-\rho_1>1-\rho$ provided $r<\rho_1/(2L_0)$,}
\]
 we obtain, when $r<\rho_1/(2L_0)$,
\begin{align*}
	A_1:=&\int_{0}^{L_0}J(y)[\psi_1 (rx+ry)+\psi_1 (rx-ry)]\rd y\geq 	2\psi_1 (rx)\int_{0}^{L_0}J(y)\rd y\geq (1-\epsilon/2) \psi_1 (rx).
\end{align*}
Since
\begin{align*}
	\lim_{s\nearrow1}\frac{\psi_2(s)}{\psi_1(s)}=0,
\end{align*}
if $\rho_1>0$ is small enough, we have
\[
\mbox{$|\psi_2(s)|\leq \frac{\epsilon}4 \psi_1(s)$ for any $s\in [1-\rho_1/2,1]$,}
\]
  and hence
\begin{align*}
	(1-\epsilon/2) \psi_1 (s)
	=(1-\epsilon/2) \frac{\psi_1 (s)}{\psi_1 (s)+\psi_2 (s)}\psi(s)\geq \frac{1-\epsilon/2}{1+\epsilon/4} \psi(s),
\end{align*}
which, combined with $\phi(s)=\psi(rs)$, implies
\begin{align*}
	A_1\geq&(1-\epsilon/2) \psi_1 (rx)\geq \frac{1-\epsilon/2}{1+\epsilon/4} \phi(x) \mbox{ for } x\in \lf[\frac{(1-\rho_1/2)}{r}, \frac{1}{r}\rr].
\end{align*}

Note that, for $x\in \lf[\frac{(1-\rho_1/2)}{r}, \frac{1}{r}\rr]$ and $y\in [-L_0, \min\{L-x,L_0\}]$, we have
\[
rx+ry\in \left[1-\frac{\rho_1}2-rL_0, 1\right]\subset [1-\rho_1, 1] \mbox{ when \eqref{2.8a} holds.}
\]
Hence, due to   $\psi_1(rx)=\psi_2(rs)\equiv 0$ for $s>1/r$ and $|\psi_2(s)|\leq \frac{\epsilon}4 \psi_1(s)$ for $s\in [1-\rho_1/2,1]$, we obtain
\begin{align*}
	A_2:=&\left|\int_{-L_0}^{L_0}J(y)\psi_2 (rx+ry)\rd y\right|=\left|\int_{-L_0}^{\min\{L-x,L_0\}}J(y)\psi_2 (rx+ry)\rd y\right|\\
	\leq&\frac{\epsilon}{4} \int_{-L_0}^{\min\{L-x,L_0\}}J(y)\psi_1 (rx+ry)\rd y
	=\frac{\epsilon}{4}A_1.
\end{align*}

Combining  the above estimates for $A_1$ and $A_2$ yields, for $x\in \lf[\frac{(1-\rho_1/2)}{r}, \frac{1}{r}\rr]$,
\begin{align*}
	\int_{-L}^{L}J(x-y)\phi(x)\rd y\geq A_1-A_2\geq (1-\frac\epsilon 4)A_1\geq (1-\epsilon) \phi(x).
\end{align*}

The proof is finished.
\end{proof}

From the above proof, we easily obtain the following result where in \eqref{2.1a}, $L$ is replaced by $L(t)$ and  $\phi(x)$ is replaced by  $\phi(x,t)$, which is a form that
will be used in our applications.

\begin{proposition}\label{prop2.3}
	Suppose  $J$ satisfies {\rm \textbf{(J)}}, and $\psi\in C([-1,1]\times [0,\infty))$ has the following properties:
	\begin{itemize}
		\item[{\rm (1)}] $\psi(\pm 1,t)=0$ and $\psi(x,t)>0$ for $x\in (-1,1)$ and $t\in [0,\infty)$;
		\item[{\rm (2)}] $\psi(\cdot,t)$ is  Lipschitz continuous in $[-1,1]$ with Lipschitz constant $M(t)$ for $t\geq 0$;
		\item[{\rm (3)}]  there is a continuous function $\rho:[0,\infty)\to (0,1)$  such that
		\begin{align*}
			& \inf_{|x|\in [0,1-\rho(t)/2] ,\ t\in [0,\infty)}\psi(x,t)>0, \mbox{ and for every $t\geq 0$},\\
			&\psi(\cdot,t)\  \mbox{ is convex over}\  [-1,-1+\rho(t)] \mbox{ as well as over } [1-\rho(t),1].
		\end{align*}
	\end{itemize}
	Let $L\in C([0,\infty))$ be a positive  function, and define
	\begin{align*}
		\phi(x,t):=\psi\lf(\frac{x}{L(t)},t\rr),\ \  x\in [-L(t),L(t)],\ t\in [0,\infty).
	\end{align*}
 Then for any $\epsilon>0$, there exist  positive constants $r_1=r_1(\epsilon,\psi,J)$ and $r_2=r_2(\epsilon,J)$ such that if
 \begin{align}\label{2.9a}
L(t)/M(t)\geq r_1 \ \ {\rm and}\ \ L(t)\rho(t)\geq r_2  \ \ \ {\rm for\ all}\ t\geq 0,
 \end{align}
 then
	\begin{align}\label{2.10b}
		\int_{-L(t)}^{L(t)}J(x-y)\phi (y, t)\rd y\geq (1-\epsilon) \phi(x,t) \mbox{ for }  x\in [-L(t),L(t)],\ t\geq 0.
	\end{align}
\end{proposition}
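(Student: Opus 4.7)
My plan is to adapt the proof of Proposition 2.1 to the parameterised setting, arguing pointwise in $t\geq 0$ while tracking uniformity through hypotheses (2), (3) and condition \eqref{2.9a}. A useful simplification over Proposition 2.1 is that hypothesis (3) provides \emph{genuine} (not merely asymptotic) convexity near $\pm 1$, so the $\psi_1+\psi_2$ splitting used there becomes unnecessary.

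First I would fix $\epsilon>0$ and choose $L_0=L_0(\epsilon,J)>0$ so that $\int_{-L_0}^{L_0}J(z)\rd z\geq 1-\epsilon/2$. Hypothesis (3) gives $m_0:=\inf_{|s|\leq 1-\rho(t)/2,\,t\geq 0}\psi(s,t)>0$, and I would set $r_1:=2L_0/(\epsilon m_0)$ and $r_2:=2L_0$. Extending $\phi(\cdot,t)$ by zero outside $[-L(t),L(t)]$ and making the change of variable $z=y-x$, it suffices, by the non-negativity of $\phi$ and the bound $L(t)\geq L_0$ (which follows from $L(t)\rho(t)\geq r_2$ and $\rho(t)<1$), to prove
\[
\int_{-L_0}^{L_0}J(z)\phi(x+z,t)\rd z\geq(1-\epsilon)\phi(x,t).
\]
By symmetry assume $x\in[0,L(t)]$ and split into the interior range $s:=x/L(t)\in[0,1-\rho(t)/2]$ and the boundary range $s\in[1-\rho(t)/2,1]$.

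In the interior range, hypothesis (2) gives $|\phi(x+z,t)-\phi(x,t)|\leq M(t)|z|/L(t)$, hence
\[
\int_{-L_0}^{L_0}J(z)\phi(x+z,t)\rd z\geq(1-\epsilon/2)\phi(x,t)-\frac{M(t)L_0}{L(t)},
\]
and $\phi(x,t)\geq m_0$ combined with $L(t)/M(t)\geq r_1$ absorbs the error term into $(\epsilon/2)\phi(x,t)$. In the boundary range, $L(t)\rho(t)\geq r_2=2L_0$ forces $s\pm z/L(t)\in[1-\rho(t),1+\rho(t)/2]$ for $|z|\leq L_0$; extending $\psi(\cdot,t)$ by zero for $s>1$ preserves convexity on $[1-\rho(t),\infty)$, since $\psi\geq 0$ and $\psi(1,t)=0$ force the left derivative of $\psi(\cdot,t)$ at $s=1$ to be $\leq 0$, matching the zero right derivative of the extension. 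Midpoint convexity then yields $\psi(s+z/L(t),t)+\psi(s-z/L(t),t)\geq 2\psi(s,t)$ for $z\in[0,L_0]$, and the evenness of $J$ gives
\[
\int_{-L_0}^{L_0}J(z)\phi(x+z,t)\rd z=\int_0^{L_0}J(z)[\phi(x+z,t)+\phi(x-z,t)]\rd z\geq 2\phi(x,t)\int_0^{L_0}J(z)\rd z\geq(1-\epsilon/2)\phi(x,t),
\]
which is stronger than required.

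The main obstacle is the calibration encoded in \eqref{2.9a}: the ratio $L(t)/M(t)$ must dominate the Lipschitz error arising in the interior, while $L(t)\rho(t)$ must make the $L_0$-neighbourhood of each boundary point fit inside the convex strip on which $\psi(\cdot,t)$ is convex. Both requirements are handled by the explicit choices $r_1:=2L_0/(\epsilon m_0)$ (depending on $\epsilon$, $J$ and $\psi$ through $m_0$) and $r_2:=2L_0$ (depending only on $\epsilon$ and $J$), matching the dependencies asserted in the statement, so the proof reduces to the routine estimates above, which are parameterised counterparts of those used in Proposition 2.1.
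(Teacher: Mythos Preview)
Your proposal is correct and follows essentially the same route as the paper's own proof: fix $L_0$ with $\int_{-L_0}^{L_0}J\geq 1-\epsilon/2$, split into an interior range (handled by the Lipschitz bound and the positive lower bound $m_0$) and a boundary range (handled by extending $\psi(\cdot,t)$ by zero and using exact convexity, so that the $\psi_2$-correction of Proposition~2.1 vanishes), arriving at the identical constants $r_1=2L_0/(\epsilon m_0)$ and $r_2=2L_0$. The paper phrases this as ``follows easily from the proof of Proposition~2.1 by treating $t$ as a parameter, with $\psi_1\equiv\psi$, $\psi_2\equiv 0$,'' which is exactly your argument.
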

\begin{proof}
This follows easily from the proof of Proposition \ref{prop2.1} by treating $t$ as a parameter. We first obtain $L_0$ as in \eqref{2.4a}, and then we use the Lipschitz continuity
to obtain the corresponding version of \eqref{2.5a}, namely, with $r(t)=1/L(t)$,
\begin{equation*}\label{2.5b}
|\phi(x, t)-\phi(y, t)|=|\psi(r(t)x, t)-\psi(r(t)y, t)|\leq M(t)r(t) |x-y| \mbox{ for } |x|\leq L(t),\ |y|\leq L(t),\ t\geq 0.
\end{equation*}

We now follow Step 1 of the previous proof with $\psi_1\equiv\psi$, $\psi_2\equiv 0$, and extend $\psi(x, t)$ to $|x|>1$ by letting $\psi(x, t)=0$ for $|x|>1$.

In Step 2, we take $\rho_1=\rho(t)$ and obtain, for Case (a),
\begin{align*}
		\int_{-L(t)}^{L(t)}J(x-y)\phi (x, t)\rd y\geq  (1-\epsilon)\phi (x, t) \mbox{ for } x\in [0, (1-\frac{\rho(t)}2)L(t)],\ t\geq 0
	\end{align*}
provided that
\begin{align}\label{2.8b}
	0<r(t)\leq \min\left\{\frac{\epsilon M_1}{2M(t)L_0},\frac{\rho(t)}{2L_0}\right\},
\end{align}
where
\begin{align*}
		M_1:= \inf_{x\in [0,1-\rho(t)/2],\, t\geq 0}\psi(x, t)>0.
	\end{align*}
	
	For Case (b), we note that $A_2=0$ and
	\[
	\int_{-L(t)}^{L(t)}J(x-y)\phi (x, t)\rd y\geq A_1\geq (1-\frac\epsilon2)\phi(x, t) \mbox{ for } x\in [(1-\frac{\rho(t)}2)L(t), L(t)],\ t\geq 0,
	\]
	provided that $0<r(t)\leq \rho(t)/(2L_0)$, in particular when \eqref{2.8b} holds. Therefore \eqref{2.10b} holds if we take
	\[
	r_1=\frac{2L_0}{M_1\epsilon} \mbox{ and } r_2=2L_0 \mbox{ in } \eqref{2.9a}.
	\]
	The proof is complete.
\end{proof}

\begin{example}\label{example2}  Let $J$ satisfy {\rm \textbf{(J)}}. %Then \eqref{2.10b} holds for the following $\phi$ with the function $L(t)$ as in Proposition {\rm \ref{prop2.3}}.
	\begin{itemize}
		\item[{\rm (a)}] Let $\lambda\geq 1$ be a constant. Define
	\begin{align*}
		&\psi(x,t):=(1-|x|)^{\lambda},\ \ x\in [-1,1],\ t\geq 0,\\
		&\phi(x,t):=	\psi(\frac{x}{L(t)},t)=\lf(1-\frac{|x|}{L(t)}\rr)^{\lambda},\ \ x\in [-L(t),L(t)],\ t\geq 0.
	\end{align*}
 Then we can use Proposition {\rm \ref{prop2.3}} with  $M(t)\equiv \lambda$ and $\rho(t)\equiv 1/2$ to see that  \eqref{2.10b} holds if
\begin{align*}
	L(t)\geq \max\{\lambda r_1,2r_2\}.
\end{align*}
\item[{\rm (b)}] Let $\lambda\geq 2$ be a constant, and  $\eta\in C(\R^2)$ be a function whose range is contained in $[4/5, 1]$. Define
	\begin{align*}
		&\psi(x,t):= (1-|x|)^{\lambda-1} \psi_1(x,t),\ \ \ x\in [-1,1],\ t\geq 0,\\
		&\phi(x,t):=	\psi(\frac{x}{L(t)},t)=\lf(1-\frac{|x|}{L(t)}\rr)^{\lambda}\psi_1(\frac{x}{L(t)},t),\ \ x\in [-L(t),L(t)],\ t\geq 0,
	\end{align*}
where
\begin{equation*}
	\psi_1(x,t):=\begin{cases}
		1-|x|,& |x|\leq \eta(L(t), t),\\
		1-\eta(L(t), t),& \eta(L(t), t)\leq  |x|\leq 1.
	\end{cases}
\end{equation*}
Then again we can use Proposition {\rm \ref{prop2.3}} with  $M(t)\equiv \lambda$ and $\rho(t)\equiv 1/2$ to see that  \eqref{2.10b} holds if
\begin{align*}
		L(t)\geq \max\{\lambda r_1,2r_2\}.
\end{align*}
 Note that one can easily check that $\psi(\cdot, t)$ is convex over $[-1,-1/2]$ and $[1/2,1]$.
 	
	\item[{\rm (c)}]  Let $\lambda\geq 2$ be a constant, and   $\eta_i\in C(\R^2)\ (i=1,2) $ be positive functions satisfying  $\eta_1>1>\eta_2>4/5$. Define
	\begin{align*}
		&\psi(x,t):= \min\lf\{1,\big[(1-|x|)\tilde \eta_1(t)\big]^{\lambda-1} \rr\}\psi_1(x,t),\ \ \ x\in [-1,1],\ t\geq 0,
	\end{align*}
	where
	\begin{equation*}
		\psi_1(x,t):=\begin{cases}
			\min\lf\{1,(1-|x|)\tilde \eta_1(t) \rr\},& |x|\leq \tilde \eta_2(t),\\
			\min\lf\{1,[1-\tilde \eta_2(t)]\tilde \eta_1(t) \rr\},& \tilde \eta_2(t)\leq  |x|\leq 1.
		\end{cases}
	\end{equation*}
and $\tilde \eta_i(t):=\eta_i(L(t),t)$. Then  for
\begin{align*}
	\phi(x,t):=	\psi(\frac{x}{L(t)},t),\ \ x\in [-L(t),L(t)],\ t\geq 0,
\end{align*}	
there exists $r_0=r_0(\epsilon, J, \lambda)>0$ such that the inequality \eqref{2.10b}  holds provided that
\begin{align*}
	L(t)\geq  r_0\tilde \eta_1(t)  \mbox{ for all } t\geq 0.
	\end{align*}
		\end{itemize}
\end{example}

To apply Proposition {\rm \ref{prop2.3}} to (c), we note from $\lambda\geq 2$ that $\psi(x, t)$ is a convex function of $x$ when
\begin{align*}
	|x|\in \lf[1-\frac{1}{\tilde \eta_1(t)},1\rr].
\end{align*}
 So we may  take  $\rho(t):=1/\tilde \eta_1(t)$ and then
  \begin{align*}
		\psi(x,t)=\begin{cases} 1 &\mbox{ for } |x|\leq 1-\rho(t),\\
		[(1-|x|)\tilde \eta_1(t) ]^{\lambda-1}\psi_1(x,t) &\mbox{ for } 1-\rho(t)<|x|\leq 1.
		\end{cases}
\end{align*}
Therefore
 for the Lipschitz constant $M(t)$, we may take
\begin{align*}
	M(t):=  \lambda \tilde \eta_1(t).
\end{align*}
Moreover,
\[\begin{aligned}
M_1=\inf_{|x|\in [0,1-\rho(t)/2] ,\ t\in [0,\infty)}\psi(x,t)&\geq \inf_{|x|\in [0,1-\rho(t)/2] ,\ t\in [0,\infty)}[(1-|x|)\tilde \eta_1(t) ]^{\lambda-1}\min\lf\{1,(1-|x|)\tilde \eta_1(t) \rr\}\\
&\geq \inf_{t\geq 0}\, [\frac{\rho(t)}2\tilde\eta_1(t)]^{\lambda-1}\min\{1, \frac{\rho(t)}2\tilde\eta_1(t)\}=2^{-\lambda}>0.
\end{aligned}\]
Hence  \eqref{2.10b} holds if
\[
L(t)\geq r_1M(t)=r_1\lambda \tilde\eta_1(t) \mbox{ and } L(t)\geq r_2 /\rho(t)=r_2\tilde \eta_1(t),
\]
namely,
\begin{align*}
L(t)\geq \max\lf\{\lambda r_1, r_2\rr\}\tilde \eta_1(t).
\end{align*}

These examples will be used in the next section.

\section{Proof of Theorem \ref{th1}}
Theorem \ref{th1} will follow from the lemmas in this section, which is divided into three subsections.

Suppose that the assumptions of Theorem \ref{th1} hold and 
  $J_2$ is the dominating kernel function.
 Then, there are three positive constants $K_1$, $K_2$ and $K_3$ such that
\begin{equation}\label{J_1J_2}
K_1\leq \max\{1, |x|^\alpha\} J_2(x) \leq K_2 \ {\rm and}\  \frac{J_1(x)}{J_2(x)}\leq K_3 \mbox{ for }  x\in \R.
\end{equation}
Subsections 3.1 and 3.2 below give the proof of Theorem \ref{th1} when \eqref{J_1J_2} holds. The proof of Theorem \ref{th1} for the remaining case that $J_1$ is dominating will be given in Subsection 3.3.

\subsection{Lower bound for $h(t)$ and $-g(t)$}\label{subsection3.1}
We will construct a lower solution for \eqref{1}, which will provide  the desired lower bound for $h(t)$ and $-g(t)$.

We first note that there exist positive constants $\delta_1$, $\delta_2$ and $\rho$ such that for all $s\in (0,1]$,
\begin{align}\label{linear}
	-a_{11}\delta_1+a_{12}\delta_2\geq \rho (\delta_1+\delta_2)\ \mbox{ and }\
	G(s\delta_1)- a_{22}s\delta_2\geq s \rho (\delta_1+\delta_2).
\end{align}
In fact, denote
\begin{equation*}
	\mathbf{A}=\left(\begin{matrix}
		-a_{11}&a_{12}\\
		G'(0)&-a_{22}
	\end{matrix}\right).
\end{equation*}
It is easily checked that
\begin{align*}
	\rho_1=\frac{-(a_{11}+a_{22})+\sqrt{(a_{11}-a_{22})^2+4a_{12}G'(0)}}{2}
\end{align*}
is the unique positive eigenvalue of $\mathbf{A}$ due to $R_0>1$.
By the Perron-Frobenius theorem, there exists a positive  eigenvector $({\delta}_1,{\delta}_2)$ such that
\begin{align*}
	\mathbf{A}({\delta}_1,{\delta}_2)^T=\rho_1({\delta}_1,{\delta}_2)^T.
\end{align*}
By replacing $(\delta_1,\delta_2)$ with a suitable scalar multiple of it, we may assume that they are sufficiently small so that
 \begin{equation}\label{linear1}
 \begin{cases}
 &\mbox{$0<\delta_1<u^*$,\ $0<\delta_2<v^*$,
 }\\
 	&-a_{11}\delta_1+a_{12}\delta_2=\rho_1 \delta_1=\frac{\rho_1 \delta_1}{\delta_1+\delta_2}(\delta_1+\delta_2), \mbox{ and for any $s\in (0,1]$,}\\
 	&G(s\delta_1)-a_{22}s\delta_2=G'(0)s\delta_1-o(1)s{\delta_1}-a_{22}s\delta_2= \rho_1 s \delta_2-o(1) s{\delta_1}\geq \frac{s\rho_1 \delta_2}{2(\delta_1+\delta_2)}(\delta_1+\delta_2),
 \end{cases}\end{equation}
where $o(1)\rightarrow 0$ as $\delta_1\rightarrow 0$. Hence, \eqref{linear} holds with $\rho=\min\{\frac{\rho_1 \delta_1}{\delta_1+\delta_2},\frac{\rho_1 \delta_2}{2(\delta_1+\delta_2)} \}$.

\begin{lemma}\label{lemma1-2}
	Suppose that $(\mathbf{J})$ holds,  $\alpha\in (1,2)$ in \eqref{J_1J_2} and spreading occurs to \eqref{1}. Then there exists  $D:=D(\alpha)>0$ such that
	\begin{equation*}\label{hg-lower1}
	-g(t),\	h(t)\geq D\,t^{\frac{1}{\alpha-1}} \mbox{ for all large } t>0.
	\end{equation*}
\end{lemma}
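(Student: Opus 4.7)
The plan is to construct, for each sufficiently large starting time $T_0$ and each sufficiently small $D>0$, a lower solution $(\underline{u},\underline{v},-L(t),L(t))$ of \eqref{1} on $[T_0,\infty)$ whose boundaries $\pm L(t)=\pm D\,t^{1/(\alpha-1)}$ realize the claimed rate, and then conclude $-g(t),h(t)\geq L(t)$ via the comparison principle from \cite{WangDu-JDE}. Because spreading happens, one can pick $T_0\gg 1$ so that $[-2L(T_0),2L(T_0)]\subset [g(T_0),h(T_0)]$ and $(u,v)(\cdot,T_0)\geq(\delta_1,\delta_2)$ uniformly on $[-L(T_0),L(T_0)]$, where $(\delta_1,\delta_2)$ is the Perron--Frobenius vector from \eqref{linear1}. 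The lower solution takes the separable form $(\underline{u},\underline{v})(x,t)=\sigma(\delta_1,\delta_2)\phi(x,t)$ for some $\sigma\in(0,1)$ and $\phi(x,t)=\psi\bigl(x/L(t),t\bigr)$ with $\psi$ chosen from Example~\ref{example2}(c): $\psi\in[0,1]$, $\psi\equiv 1$ on a large interior part of $[-1,1]$, tapering smoothly to $\psi(\pm1,t)=0$. The initial ordering at $t=T_0$ is immediate since $\sigma\phi\leq 1$.

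\textbf{Sub-PDE.} The specific $\psi$ from Example~\ref{example2}(c) satisfies the hypotheses of Proposition~\ref{prop2.3} for each of the kernels $J_1,J_2,K$, so $\int_{-L(t)}^{L(t)} J(x-y)\phi(y,t)\,dy\geq (1-\epsilon)\phi(x,t)$ on $[-L(t),L(t)]$ once $L(t)$ is large. Plugging this into the right-hand side of the $u$-equation and using \eqref{linear1} yields
\begin{align*}
d_1\!\int_{-L(t)}^{L(t)}\!J_1(x-y)\underline{u}\,dy - (d_1+a_{11})\underline{u} + a_{12}\!\int_{-L(t)}^{L(t)}\!K(x-y)\underline{v}\,dy
\;\geq\; \tfrac{\sigma\rho(\delta_1+\delta_2)}{2}\,\phi
\end{align*}
as soon as $\epsilon$ is small; the analogous estimate for the $v$-equation invokes $G(\sigma\delta_1\phi)-a_{22}\sigma\delta_2\phi\geq \sigma\phi\rho(\delta_1+\delta_2)$ (valid by \eqref{linear1} since $\sigma\phi\in(0,1]$) and produces the same right-hand side. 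Hence both sub-PDE inequalities reduce to the single requirement $\phi_t(x,t)\leq C_0\phi(x,t)$ for a constant $C_0$ depending only on $\rho,\delta_1,\delta_2$. Because $L'(t)/L(t)=1/[(\alpha-1)t]\to 0$ and Example~\ref{example2}(c) is engineered so that $\phi_t/\phi=O(L'(t)/L(t))$ on the support of $\phi$, this inequality holds for every $t\geq T_0$ provided $T_0$ is sufficiently large.

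\textbf{Sub-speed and comparison.} Using $J_2(x)\geq K_1|x|^{-\alpha}$ for $|x|\geq 1$ together with $\phi\geq 1/2$ on a large middle portion, the change of variables $u=y-x$ gives
\begin{align*}
\mu\rho\!\int_{-L(t)}^{L(t)}\!\!\int_{L(t)}^{\infty}\!\! J_2(x-y)\,\underline{v}(x,t)\,dy\,dx
&\;\geq\; \frac{\mu\rho\sigma\delta_2 K_1}{2(\alpha-1)}\!\int_{-L(t)+1}^{L(t)-1}\!(L(t)-x)^{1-\alpha}\,dx\\
&\;\geq\; C^\star L(t)^{2-\alpha},
\end{align*}
whereas $L'(t)=\tfrac{D^{\alpha-1}}{\alpha-1}L(t)^{2-\alpha}$. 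Choosing $D$ small enough that $D^{\alpha-1}/(\alpha-1)\leq C^\star$ therefore gives $L'(t)\leq \mu\!\int\!\!\int J_1\underline{u}+\mu\rho\!\int\!\!\int J_2\underline{v}$, i.e.\ the right boundary of the lower solution advances no faster than the flux prescription in \eqref{1} allows; the estimate at $x=-L(t)$ is symmetric. The comparison principle for prescribed-boundary lower solutions of \eqref{1} from \cite{WangDu-JDE} then delivers $h(t)\geq L(t)$ and $-g(t)\geq L(t)$ for all large $t$, which is the claim.

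\textbf{Main obstacle.} The truly delicate step is the joint construction of $\phi$: Proposition~\ref{prop2.3} imposes an asymptotic convexity and a uniform positive lower bound for $\phi$ away from the free boundary; the sub-PDE requires $\phi_t/\phi$ to be uniformly controlled so that the reaction term dominates; and the flux computation requires $\phi\approx 1$ on a large interior region so that the tail $J_2\sim|x|^{-\alpha}$ produces the sharp growth $L(t)^{2-\alpha}$. Tuning the parameters $\tilde\eta_1(t),\tilde\eta_2(t)$ of Example~\ref{example2}(c) to $L(t)$ (and hence to $t$) is what makes all three requirements hold simultaneously; this is where the principal bookkeeping of the proof is spent.
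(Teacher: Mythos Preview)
Your proposal has a genuine gap at the ``sub-PDE'' step. You reduce both differential inequalities to the single requirement $\phi_t(x,t)\le C_0\,\phi(x,t)$ on $[-L(t),L(t)]$ and then assert that ``Example~\ref{example2}(c) is engineered so that $\phi_t/\phi=O(L'(t)/L(t))$ on the support of $\phi$''. This is false near the moving boundary. Any profile of Example~\ref{example2} type behaves like $\phi(x,t)\sim c\,(L(t)-|x|)^{k}$ for some $k\ge 1$ as $|x|\to L(t)$, whence
\[
\frac{\phi_t}{\phi}\;\sim\; k\,\frac{L'(t)}{L(t)-|x|}+O(1),
\]
which blows up as $|x|\uparrow L(t)$. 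Tuning $\tilde\eta_1,\tilde\eta_2$ changes the width of the transition layer but cannot remove this singularity; Example~\ref{example2}(c) is engineered only for the sub-eigenfunction estimate of Proposition~\ref{prop2.3}, not for any bound on $\phi_t/\phi$. Consequently your separable ansatz $(\underline u,\underline v)=\sigma(\delta_1,\delta_2)\phi$ cannot satisfy the sub-PDE on a full neighbourhood of the free boundary.

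The paper's proof avoids exactly this obstacle by abandoning proportionality: $\underline u$ and $\underline v$ are built from \emph{different} profiles. Near the boundary (for $|x|\in[\psi_1(t),\underline h(t)]$) one has $\underline v\gtrsim \underline h^{1-\alpha}(\underline h-|x|)^{\lambda-1}/\underline h^{\lambda-1}$ via an extra plateau factor $\Psi_1$, while $\underline u\sim(\underline h-|x|)^{\lambda}/\underline h^{\lambda}$. This asymmetry buys two things: (i) in the $u$-equation one can absorb $\underline u_t$ into $\epsilon\,\underline v$ (not $\epsilon\,\underline u$), because $\underline v$ is strictly larger than $\underline u$ near the edge; (ii) in the $v$-equation one absorbs $\underline v_t$ into $\epsilon\int J_2(x-y)\underline v\,dy$ (not $\epsilon\,\underline v$), exploiting the heavy tail $J_2(x)\approx|x|^{-\alpha}$ of the \emph{dominating} kernel to show that this nonlocal term is $\gtrsim \underline h^{1-\alpha}\gg \underline v$ there. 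With your scalar $\phi$ you have $\underline u=(\delta_1/\delta_2)\underline v$ everywhere, so mechanism~(i) is unavailable; and in the $u$-equation the kernels $J_1,K$ carry no heavy-tail assumption, so mechanism~(ii) is unavailable as well. Either rework the construction with two distinct profiles (as the paper does), or explain how the boundary layer can be handled without $\phi_t\le C_0\phi$.
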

\begin{proof}
	We construct a suitable lower solution to \eqref{1}, which will lead to the desired estimate by the comparison principle.	%Without loose of generality, we assume that $J_2$ satisfies $(\mathbf{J_{\alpha}})$.
	
Let $\lambda\geq 2$ be a constant. 	Define
	\begin{equation*}
		\begin{cases}
			\underline{h}(t):=(C_1t+\sigma_1)^{\frac{1}{\alpha-1}},~~\underline{g}(t):=-\underline{h}(t) &\mbox{ for }t\geq0,\\
		\underline{u}(x,t):=		\lf(\frac{\underline h(t)-|x|}{\underline h(t)}\rr)^{\lambda}\delta_1 &\mbox{ for } t\geq 0,\ |x|\leq \underline{h}(t),\\
	\underline{v}(x,t):=	\Psi_1(x,t)\lf(\frac{\underline h(t)-|x|}{\underline h(t)}\rr)^{\lambda-1}\delta_2 &\mbox{ for }  t\geq 0,\ |x|\leq \underline{h}(t),
\end{cases}
\end{equation*}
			where
\begin{equation*}
	\Psi_1(x,t):=
	\begin{cases}
	\dd	\frac{\underline h(t)-|x|}{\underline h(t)},&|x|\leq \psi_1(t),\\[3mm]
	\dd	\frac{\underline h(t)-\psi_1(t)}{\underline h(t)},& |x|\geq \psi_1(t),
	\end{cases}\ \ \  \ \psi_1(t):=\dd\frac{\underline h(t)}{C_2\underline h^{1-\alpha}(t)+1},
\end{equation*}
 $(\delta_1,\delta_2)$ satisfies \eqref{linear}, \eqref{linear1},  and the positive constants $C_1$, $C_2$  and $\sigma_1$ will be chosen later.

 It is clear that $\underline u$, $\underline v$ and $\underline u_t$ are continuous, while  $\underline v_t$ exists and is continuous except when  $|x|=\psi_1(t)$, where $\underline v_t$ has a jumping discontinuity. Moreover,
	\[
	0\leq \underline u (x,t)\leq \delta_1,\ 0\leq \underline v(x,t)\leq \delta_2\  \mbox{ for } |x|\leq \underline h(t),\ t\geq 0.
	\]
	In what follows, we check that $(\underline u, \underline v, \underline g, \underline h)$ defined above forms a lower solution to \eqref{1} when the involved constants
are chosen properly. 	We will do this in two steps.
	
{\bf Step 1}.	We  check the inequality
	\begin{align}\label{lower-ac1}
		\underline{h}'(t)\leq \mu \int_{-\underline{h}(t)}^{\underline{h}(t)}\int^{+\infty}_{\underline{h}(t)}\Big(J_1(x-y)\underline{u}(x,t)+\rho J_2(x-y)\underline v (x,t)\Big)\rd y \rd x,
	\end{align}
which immediately gives
\begin{align*}
	\underline{g}'(t)\geq-\mu \int_{-\underline{h}(t)}^{\underline{h}(t)}\int_{-\infty}^{-\underline{h}(t)}\Big(J_1(x-y)\underline{u}(x,t)+\rho J_2(x-y)\underline v (x,t)\Big)\rd y \rd x.
\end{align*}
	
Using the definitions of $\underline u$ and $\underline v$ we have
		\begin{align*}
			&\mu \int_{-\underline{h}(t)}^{\underline{h}(t)}\int^{+\infty}_{\underline{h}(t)}\Big(J_1(x-y)\underline{u}(x,t)+\rho J_2(x-y)\underline{v}(x,t)\Big)\rd y \rd x\\
			\geq&\ \mu \rho  \int_{0}^{\underline{h}(t)}\int^{+\infty}_{\underline{h}(t)}J_2(x-y)\underline{v}(x,t)\rd y \rd x\geq \mu \rho\int_{0}^{\underline{h}(t)}\int^{+\infty}_{\underline{h}(t)}\delta_2J_2(x-y)\lf(\frac{\underline h(t)-x}{\underline h(t)}\rr)^\lambda \rd y\rd x\\
			= &\ \frac{\mu \rho}{\underline{h}^\lambda(t)}\int_{-\underline{h}(t)}^{0}\int^{+\infty}_{0}\delta_2J_2(x-y)(-x)^\lambda \rd y\rd x
			=\frac{\mu \rho}{\underline{h}^\lambda(t)}\int^{\underline{h}(t)}_{0}\int^{+\infty}_{x} \delta_2 J_2(y)x^\lambda dydx\\
			=&\ \frac{\mu \rho}{\underline{h}^\lambda(t)}\lf[\int^{\underline{h}(t)}_{0}\int^{y}_{0}+\int_{\underline{h}(t)}^{+\infty}
			\int^{\underline{h}(t)}_{0}
			\rr]\delta_2J_2(y)x^\lambda dxdy\\
			\geq&\ \frac{\mu \rho}{\underline{h}^\lambda(t)}\int^{\underline{h}(t)}_{0}\int^{y}_{0}\delta_2J_2(y)x^\lambda dxdy
			= \frac{\mu \rho}{(\lambda+1)\underline{h}^\lambda(t)}\int^{\underline{h}(t)}_{0}\delta_2J_2(y)y^{\lambda+1} \rd y.
			\end{align*}
By \eqref{J_1J_2}, when $\sigma_1\gg 1$,
			\begin{align*}
				&\frac{\mu\rho}{(\lambda+1)\underline{h}^\lambda(t)}\int^{\underline{h}(t)}_{0}\delta_2J_2(y)y^{\lambda+1} \rd y\\
				&\geq\frac{K_1\delta_2\mu\rho}{(\lambda+1)\underline{h}^\lambda(t)}\int^{\underline{h}(t)}_{1}y^{\lambda+1-\alpha} \rd y= 	\frac{K_1\delta_2\mu\rho (\underline h^{\lambda+2-\alpha}(t)-1)}{(\lambda+1)(\lambda+2-\alpha)\underline{h}^\lambda(t)}\\
				&\geq\frac{K_1\delta_2\mu\rho }{2(\lambda+1)(\lambda+2-\alpha)}\underline{h}^{2-\alpha}(t)= \frac{K_1\delta_2\mu\rho }{2(\lambda+1)(\lambda+2-\alpha)}(C_1t+\sigma_1)^{\frac{2-\alpha}{\alpha-1}}\\
			&\geq\frac{C_1}{\alpha-1}(C_1t+\sigma_1)^{\frac{2-\alpha}{\alpha-1}}=\underline{h}'(t)
		\end{align*}
	provided that
	\begin{align*}
		C_1\leq \frac{K_1\delta_2\mu\rho(\alpha-1) }{2(\lambda+1)(\lambda+2-\alpha)}.
	\end{align*}
 Thus \eqref{lower-ac1} holds for small $C_1$ and large $\sigma_1$.

{\bf Step 2}.	We prove the following inequalities for $t>0$ and $|x|\in [0, \underline h(t)]\setminus\{\psi_1(t)\}$,
\begin{equation}\label{lower2.4}
\begin{cases}
	\dd\underline u_t\leq d_1\int^{\underline h(t)}_{-\underline h(t)}J_1(x-y)\underline{u}(y,t)dy-d_1\underline{u}
	-a_{11}\underline{u}+a_{12}\int^{\underline h(t)}_{-\underline h(t)}K(x-y)\underline{v}(y,t)dy,\\[3mm]
	\dd\underline{v}_t\leq d_2\int^{\underline h(t)}_{-\underline h(t)}J_2(x-y)\underline{v}(y,t)dy-d_2\underline{v}
-a_{22}\underline{v}+G(\underline{u}).
\end{cases}
\end{equation}

We first calculate $\underline u_t$ and $\underline v_t$. 	It is easily seen that for $x\in [-\underline h(t),\underline h(t)]$ and $t\geq 0$,
	\begin{align*}
			\underline{u}_t(x,t)=\delta_1\lambda\lf(\frac{\underline h(t)-|x|}{\underline h(t)}\rr)^{\lambda-1}\frac{\underline{h}'(t)|x|}{\underline{h}^2(t)}=\frac{C_1\delta_1\lambda}{\alpha-1}\lf(\frac{\underline h(t)-|x|}{\underline h(t)}\rr)^{\lambda-1}\frac{|x|}{\underline{h}(t)}\underline{h}^{1-\alpha}(t),
	\end{align*}
where we have used $\underline h'(t)=\frac{C_1}{\alpha-1}\underline h^{2-\alpha}(t)$. Similarly,
\begin{align*}
	\underline{v}_t(x,t)=\frac{C_1\delta_2\lambda}{\alpha-1}\lf(\frac{\underline h(t)-|x|}{\underline h(t)}\rr)^{\lambda-1}\frac{|x|}{\underline{h}(t)}\underline{h}^{1-\alpha}(t)
	\mbox{ for } 0\leq |x|<\psi_1(t).
\end{align*}
For $\psi_1(t)< |x|<\underline h(t)$, using
\begin{align*}
\Psi_1=\frac{\underline h-\psi_1}{\underline h}=\frac{C_2\underline h^{1-\alpha}}{C_2\underline h^{1-\alpha}+1}\leq C_2\underline h^{1-\alpha},\ 	\ (\Psi_1)_t=\frac{C_2(1-\alpha)\underline h^{-\alpha}\underline h'}{(C_2\underline h^{1-\alpha}+1)^2}=-\frac{C_1C_2\underline h^{2-2\alpha}}{(C_2\underline h^{1-\alpha}+1)^2}<0,
\end{align*}
we obtain
\begin{equation}\begin{aligned}\label{3.6}
	\underline{v}_t(x,t)&
	=\delta_2(\Psi_1)_t\lf(\frac{\underline h-|x|}{\underline h}\rr)^{\lambda-1}+\Psi_1\frac{C_1\delta_2(\lambda-1)}{\alpha-1}\lf(\frac{\underline h-|x|}{\underline h}\rr)^{\lambda-2}\frac{|x|}{\underline{h}}\underline{h}^{1-\alpha}\\
	%=&\delta_2\lf(\frac{\underline h-|x|}{\underline h}\rr)^{\lambda-2}\lf[-\lf(\frac{\underline h-|x|}{\underline h}\rr)\frac{C_1C_3\underline h^{2-2\alpha}}{(C_3\underline h^{1-\alpha}+1)^2}+\frac{C_3\underline h^{1-\alpha}}{C_3\underline h^{1-\alpha}+1}\frac{C_1(\lambda-1)}{\alpha-1}\frac{|x|}{\underline{h}}\underline{h}^{1-\alpha}\rr]\nonumber\\
	&\leq\Psi_1\frac{C_1\delta_2(\lambda-1)}{\alpha-1}\lf(\frac{\underline h-|x|}{\underline h}\rr)^{\lambda-2}\frac{|x|}{\underline{h}}\underline{h}^{1-\alpha}\\
	&\leq\frac{C_1C_2\delta_2(\lambda-1)}{\alpha-1}\lf(\frac{\underline h-|x|}{\underline h}\rr)^{\lambda-2}\underline{h}^{2-2\alpha}.
\end{aligned}
\end{equation}

We next show that $\underline u_t$ and $\underline v_t$ can be bounded from above by constant multiples of  $\underline u$, $\underline v$ and  $\int_{-\underline h}^{\underline h} J_2(x-y) \underline v(y,t) \rd y$; namely,  \medskip

{\bf Claim 1}: For any fixed $\epsilon>0$ we can find suitable  $C_1$ and $C_2$ such that for  $\sigma_1\gg1$, $t\geq 0$ and $|x|\in [0, \underline h(t)]\setminus\{\psi_1(t)\}$,
\begin{align}\label{lower2.5}
	\underline u_t\leq \epsilon (\underline u+\underline v),\ \ \underline v_t\leq \epsilon \underline v+\epsilon \int_{-\underline h}^{\underline h} J_2(x-y) \underline v(y,t) \rd y.
\end{align}

%for some $\widetilde C$ depending on $C$.
%We only verify  the above inequality for $\underline u$ since one can similarly check that for $\underline v$, and
The proof of \eqref{lower2.5} will be carried out according to two cases: $ |x|< \psi_1(t)$ and $\psi_1(t)< |x|< \underline h (t)$. % for some continuous function $\psi$ given as below.

 For
$
	|x|< \psi_1 (t)=\frac{\underline h (t)}{C_2\underline{h}^{1-\alpha}(t)+1}$,
by the earlier computations,
\begin{align*}
	\underline u_t- \epsilon \underline u&=\frac{C_1\delta_1\lambda}{\alpha-1}\lf(\frac{\underline h-|x|}{\underline h}\rr)^{\lambda-1}\frac{|x|}{\underline{h}}\underline{h}^{1-\alpha}-\epsilon \lf(\frac{\underline h-|x|}{\underline h}\rr)^{\lambda}\delta_1\\
	&=\epsilon \lf(\frac{\underline h-|x|}{\underline h}\rr)^{\lambda-1}\delta_1\lf[\frac{C_1\lambda}{\epsilon (\alpha-1)}\frac{|x|}{\underline{h}}\underline{h}^{1-\alpha}+\frac{|x|}{\underline{h}}-1\rr]\\
	&=\epsilon \lf(\frac{\underline h-|x|}{\underline h}\rr)^{\lambda-1}\delta_1\lf[\lf(\frac{C_1\lambda}{\epsilon (\alpha-1)}\underline{h}^{1-\alpha}+1\rr)\frac{|x|}{\underline{h}}-1\rr]\\
	&\leq \epsilon \lf(\frac{\underline h-|x|}{\underline h}\rr)^{\lambda-1}\delta_1\lf[\lf(\frac{C_1\lambda}{\epsilon (\alpha-1)}\underline{h}^{1-\alpha}+1\rr)\frac{1}{C_2\underline{h}^{1-\alpha}+1}-1\rr]
	\leq 0
\end{align*}
provided that
\begin{align*}
	C_2\geq \frac{C_1\lambda}{\epsilon (\alpha-1)}.
\end{align*}
The same calculation also yields, under the same conditions for $C_1$ and $C_2$,
\begin{align*}
	\underline v_t-\epsilon \underline v\leq 0 \mbox{ for } |x|< \psi_1(t).
\end{align*}
Hence \eqref{lower2.5} holds  for $|x|<\psi_1 (t)$.

Next we check \eqref{lower2.5} for $\psi_1 (t)< |x|< \underline h (t)$.   Clearly,
\begin{align*}
	\underline u_t=&\frac{C_1\delta_1\lambda}{\alpha-1}\lf(\frac{\underline h-|x|}{\underline h}\rr)^{\lambda-1}\frac{|x|}{\underline{h}}\underline{h}^{1-\alpha}\leq \frac{C_1\delta_1\lambda}{\alpha-1}\lf(\frac{\underline h-|x|}{\underline h}\rr)^{\lambda-1}\underline{h}^{1-\alpha},
\end{align*}
and from the definitions of $\underline v$ and $\Psi_1$, we have
\begin{align*}
	 \underline v=&  \delta_2\Psi_1\lf(\frac{\underline h-|x|}{\underline h}\rr)^{\lambda-1}=\delta_2\frac{C_2\underline h^{1-\alpha}}{C_2\underline h^{1-\alpha}+1}\lf(\frac{\underline h-|x|}{\underline h}\rr)^{\lambda-1}\geq \frac{\delta_2C_2\underline h^{1-\alpha}}{2}\lf(\frac{\underline h-|x|}{\underline h}\rr)^{\lambda-1}
\end{align*}
for  large $\underline h$ which can be guaranteed by enlarging $\sigma_1$ if necessary. Therefore
\begin{align*}
\underline u_t\leq \epsilon \underline v\leq \epsilon (\underline u+\underline v) \ \mbox{ for }\ \psi_1(t)< |x|< \underline h (t)
\end{align*}
provided that
\begin{align*}
	C_2\geq \frac{2C_1\delta_1\lambda}{\epsilon\delta_2} \mbox{ and } \sigma_1\gg 1.
\end{align*}

It remains to verify \eqref{lower2.5} for $v_t$ for  $\psi_1(t)\leq |x|\leq \underline h(t)$. By \eqref{3.6},
\begin{align*}
	\underline v_t&\leq\frac{C_1C_2\delta_2(\lambda-1)}{\alpha-1}\lf(\frac{\underline h-|x|}{\underline h}\rr)^{\lambda-2}\underline{h}^{2-2\alpha}\\
	&\leq \frac{C_1C_2\delta_2(\lambda-1)}{\alpha-1}\lf(\frac{\underline h-\psi_1}{\underline h}\rr)^{\lambda-2}\underline{h}^{2-2\alpha}\\
	&=\frac{C_1C_2^{\lambda-1}\delta_2(\lambda-1)}{\alpha-1}\frac{\underline h^{\lambda(1-\alpha)}}{\lf(C_2\underline{h}^{1-\alpha}+1\rr)^{\lambda-2}}\leq \frac{C_1C_2^{\lambda-1}\delta_2(\lambda-1)}{\alpha-1}\underline h^{\lambda(1-\alpha)}.
\end{align*}
On the other hand,  since $J_2$ is even,
\begin{align*}
	\int_{-\underline h}^{\underline h} J_2(x-y) \underline v(y,t) \rd y=\int_{-\underline h}^{\underline h} J_2(y-x) \underline v(y,t) \rd y=\int_{-\underline h-x}^{\underline h-x} J_2(y) \underline v(x+y,t) \rd y.
\end{align*}
Moreover, since $\psi_1/\underline h$ is close to  $1$ for large $\underline h$, we have
\begin{align*}
 &(-\underline h/2, -\underline h/4)	\subset (-\underline h-x,\underline h-x) \ \mbox{ for }\ \psi_1\leq x\leq \underline h,\\
 &x+y\in [\underline h/4, 3\underline h /4]   \ \mbox{ for }\ \psi_1\leq x\leq \underline h,\ y\in(-\underline h/2, -\underline h/4),
\end{align*}
and so by \eqref{J_1J_2},
\begin{equation}\label{3.8}
\begin{aligned}
	&\int_{-\underline h}^{\underline h} J_2(x-y) \underline v(y,t) \rd y\geq \int_{-\underline h/2}^{-\underline h/4} J_2(y) \underline v(x+y,t) \rd y\\
	&\geq K_1\delta_2\int_{-\underline h/2}^{-\underline h/4} |y|^{-\alpha} \lf(\frac{\underline h-|x+y|}{\underline h}\rr)^{\lambda} \rd y=K_1\delta_2\int_{-\underline h/2}^{-\underline h/4} |y|^{-\alpha} \lf(\frac{\underline h-x-y}{\underline h}\rr)^{\lambda} \rd y\\
	&\geq	\frac{K_1\delta_2}{4^{\lambda}}\int_{-\underline h/2}^{-\underline h/4} |y|^{-\alpha}  \rd y=\frac{K_1\delta_2(4^{\alpha-1}-2^{\alpha-1})}{4^{\lambda}(\alpha-1)} \underline h^{1-\alpha}.
\end{aligned}
\end{equation}
Hence, thanks to
\begin{align*}
 \alpha< \lambda (\alpha-1)+1,
\end{align*}
we deduce for large $\underline h$,
\begin{align*}
	\underline v_t\leq \frac{C_1C_2^{\lambda-1}\delta_2(\lambda-1)}{\alpha-1}\underline h^{\lambda(1-\alpha)}<\epsilon \frac{K_1\delta_2(4^{\alpha-1}-2^{\alpha-1})}{4^{\lambda}(\alpha-1)} \underline h^{1-\alpha}\leq \epsilon\int_{-\underline h}^{\underline h} J_2(x-y) \underline v(y,t) \rd y,
\end{align*}
which yields \eqref{lower2.5} for $\psi_1(t)<x<\underline h(t)$. The proof for $\psi_1 (t)<-x<\underline h (t)$ is similar. Claim 1 is now proved, after checking that all the requirements for $C_1, C_2$ and $\sigma_1$ stated above can be met simultaneously.

In view of Claim 1,  in order to verify \eqref{lower2.4}, it remains to show that
\begin{equation}\begin{aligned}\label{lower2.6}
		& d_1\int^{\underline h(t)}_{-\underline h(t)}J_1(x-y)\underline{u}(y,t)dy-d_1\underline{u}
	-a_{11}\underline{u}+a_{12}\int^{\underline h(t)}_{-\underline h(t)}K(x-y)\underline{v}(y,t)dy,\\[2mm]
	&\geq  \epsilon\lf(\underline u+\underline v\rr)
\end{aligned}
\end{equation}
and
\begin{equation}\begin{aligned}\label{lower2.7}
		 &d_2\int^{\underline h(t)}_{-\underline h(t)}J_2(x-y)\underline{v}(y,t)dy-d_2\underline{v}
	-a_{22}\underline{v}+G(\underline{u}) \\
	&\geq \epsilon\lf(\underline v+\int^{\underline h(t)}_{-\underline h(t)}J_2(x-y)\underline{v}(y,t)dy\rr).
\end{aligned}
\end{equation}

By \eqref{linear} and the fact that
\begin{align*}
	(\underline{u},\underline{v})=\lf(\frac{\underline h(t)-|x|}{\underline h(t)}\rr)^{\lambda}(\delta_1,\delta_2)\ \mbox{ for }\ |x|\leq  \psi_1(t),
\end{align*}
 we have
	\begin{equation*}
		\begin{cases}
			-a_{11}\underline{u}+a_{12}\underline{v}\geq  \rho (\underline{u}+\underline{v}), &|x|\leq \psi_1(t),\\
			-a_{22}\underline{v}+G(\underline{u})\geq  \rho (\underline{u}+\underline{v}),&|x|\leq \psi_1(t).
		\end{cases}
	\end{equation*}
 For $|x|\geq  \psi_1(t)$,  we have  $\underline u\leq s\delta_1,\ \underline v=s\delta_2$ with
 \[
 s:= \Psi_1(x,t)\lf(\frac{\underline h(t)-|x|}{\underline h(t)}\rr)^{\lambda-1}.
 \]
  So by \eqref{linear}, 
\begin{align*}
	-a_{11}\underline{u}+a_{12}\underline{v}-  \rho (\underline{u}+\underline{v})\geq s[-a_{11}\delta_1+a_{12}\delta_2-\rho(\delta_1+\delta_2)]\geq 0 \ \mbox{ for }  \psi_1(t)\leq |x|\leq \underline h(t).
\end{align*}
By Example \ref{example2} (a) and (b) with $L(t)$ replaced by $\underline h(t)$, we see for any given small $\epsilon>0$, there is large $h_*=h_*(\epsilon)$ such that for $\underline h(t)\geq h_*$ and $|x|\leq \underline h(t)$,
\begin{align*}
	&\int^{\underline h(t)}_{-\underline h(t)}J_1(x-y)\underline{u}(y,t)dy\geq (1-\epsilon)\underline u(x,t),\\
	&\int^{\underline h(t)}_{-\underline h(t)}J(x-y)\underline{v}(y,t)dy\geq (1-\epsilon)\underline v(x,t) \mbox{ for } J\in\{J_2, K\},
\end{align*}
from which we can easily obtain \eqref{lower2.6} for $|x|\leq \underline h(t)$ and  \eqref{lower2.7} for  $|x|\leq \psi_1(t)$. Indeed, the left hand side of \eqref{lower2.6}
is bounded from below by
\[
d_1(1-\epsilon)\underline u-d_1\underline u+\rho(\underline u+\underline v)-a_{12}\epsilon\underline v\geq (\rho/2)(\underline u+\underline v)\geq \epsilon(\underline u+\underline v)
\]
if $\epsilon>0$ is small enough so that $(d_1+a_{12}+1)\epsilon\leq \rho/2$. For \eqref{lower2.7}, we note that it follows from
\[\begin{aligned}
&(d_2-\epsilon)\int^{\underline h(t)}_{-\underline h(t)}J_2(x-y)\underline{v}(y,t)dy-d_2\underline{v}
	-a_{22}\underline{v}+G(\underline{u})\\
	&\geq (d_2-\epsilon)(1-\epsilon)\underline v-d_2\underline v+\rho(\underline u+\underline v)
	\geq (\rho/2)(\underline u+\underline v)\geq \epsilon\underline v
\end{aligned}\]
provided that $(d_2+1)\epsilon\leq \rho/2$.

The proof of \eqref{lower2.7} for  $\psi_1(t)\leq |x|\leq \underline h(t)$ uses the following conclusion.

{\bf Claim 2}.   For any given constant $M>0$, there exists $M_0>0$ such that
\begin{align}\label{3.11}
	\int^{\underline h(t)}_{-\underline h(t)}J_2(x-y)\underline{v}(y,t)dy\geq M \underline v(x,t) \mbox{ for } \psi_1(t)\leq |x|\leq \underline h(t),\ t\geq 0,
\end{align}
whenever $\sigma_1\geq M_0$.

This, combined with $G(\underline{u})\geq  0$, clearly yields
 \eqref{lower2.7} for  $\psi_1(t)\leq |x|\leq \underline h(t)$. Thus to  finish the proof of Step 2, it suffices to prove \eqref{3.11}.

 For $\psi_1(t)\leq |x|\leq \underline h(t)$, by the definition of  $\underline v$ ,
\begin{align*}
	\underline v=   \delta_2\frac{\underline h-\psi_1}{\underline h}\lf(\frac{\underline h-|x|}{\underline h}\rr)^{\lambda-1}
	\leq\delta_2	\lf(\frac{\underline h-\psi_1}{\underline h}\rr)^{\lambda}=\delta_2\lf(\frac{C_2\underline h^{1-\alpha}}{C_2\underline h^{1-\alpha}+1}\rr)^\lambda
	\leq \delta_2C_2^{\lambda}\underline h^{\lambda(1-\alpha)}.
\end{align*}
On the other hand, by \eqref{3.8},
\begin{align*}
&\int_{-\underline h}^{\underline h} J_2(x-y) \underline v(y,t) \rd y\geq \frac{K_1\delta_1(4^{\alpha-1}-2^{\alpha-1})}{4^{\lambda}(\alpha-1)} \underline h^{1-\alpha}.
\end{align*}
These calculations together with $\alpha>1$ and $\lambda>1$ give \eqref{3.11} for large $\underline h$. We have now finished the proof of Claim 2.

{\bf Step 3}. Completion of the proof by the comparison principle.
			
Since spreading happens and $u^*>\delta_1\geq \underline u$, $v^*>\delta_2\geq \underline v$, there exists $t_0>0$ large enough such that $[g(t_0),h(t_0)]\supset[-\underline{h}(0),\underline{h}(0)]$, and also
\begin{align*}
u(x,t_0)\geq \delta_1\geq \underline{u}(x,0),~~v(x,t_0)\geq \delta_2\geq \underline{v}(x,0) \mbox{ for } 	x\in[-\underline{h}(0),\underline{h}(0)].
\end{align*}
 By definition, $(\underline{u}(x,t),\underline{v}(x,t))=(0,0)$ for $x=\pm \underline{h}(t)$ and $t\geq0$.

In view of the inequalities proved in Steps 1 and 2, we are now in a position to apply a suitable version of the comparison principle (see Remark 2.4 in \cite{WangDu-JDE}, and also Remark 2.4 in \cite{dn2022} which explains why the jumping discontinuity of $\underline v_t$ along $|x|=\psi_1(t)$ does not affect the conclusion) to assert that
				\[
				-\underline h(t)\geq g(t_0+t),
		 \ \ \underline{h}(t)\leq h(t_0+t) \mbox{ for }t\geq0.
				\]
The proof is finished.
			\end{proof}

\begin{lemma}\label{alpha2}
Suppose that $(\mathbf{J})$ holds and $\alpha=2$ in \eqref{J_1J_2}. If spreading occurs to \eqref{1}, then there exists $D>0$ such that
\begin{equation*}
-g(t), h(t)\geq D\,t\ln t\mbox{ for all large } t>0.\\
\end{equation*}

\end{lemma}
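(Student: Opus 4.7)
The plan is to mirror the structure of Lemma~\ref{lemma1-2} but with a lower solution whose moving front grows as $t\ln t$ rather than a pure power of $t$. I take $\underline h(t) = C_1(t+\sigma_1)\ln(t+\sigma_1)$ so that $\underline h'(t)\sim C_1\ln t$, and construct $\underline u,\underline v$ using the ``flat top with boundary layer'' profile of Example~\ref{example2}(c). Concretely, with $\lambda\geq 2$ and $\tilde\eta_1(t)$ chosen so that $\underline u\approx \delta_1$ and $\underline v\approx\delta_2$ on $|x|\leq \underline h(t)-L(t)$ and both transition smoothly to $0$ on a boundary layer of width $L(t)$, Proposition~\ref{prop2.3} guarantees the required nonlocal diffusion lower bounds $\int J\,\underline u\geq (1-\epsilon)\underline u$ and $\int J\,\underline v\geq (1-\epsilon)\underline v$ throughout $[-\underline h(t),\underline h(t)]$ for each $J\in\{J_1,J_2,K\}$.

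The crucial new estimate is the flux at the free boundary. Since $J_2(z)\geq K_1 z^{-2}$ for $z\geq 1$, one has $\int_A^\infty J_2(z)\,\rd z\geq K_1/A$ for $A\geq 1$, so
\[
\mu\rho\int_{-\underline h(t)}^{\underline h(t)}\!\int_{\underline h(t)}^{\infty}\! J_2(x-y)\,\underline v(x,t)\,\rd y\,\rd x\ \geq\ \mu\rho\,\delta_2 K_1\!\int_{0}^{\underline h(t)-L(t)}\!\frac{\rd x}{\underline h(t)-x}\ \sim\ \mu\rho\,\delta_2 K_1\ln\!\lf(\frac{\underline h(t)}{L(t)}\rr).
\]
Choosing $L(t)$ to grow at most polylogarithmically in $t$ (for instance $L(t)\asymp \ln t$), the right-hand side is of order $\ln t$, which matches $\underline h'(t)=C_1(\ln(t+\sigma_1)+1)$ as long as $C_1$ is sufficiently small. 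This yields the free-boundary inequality $\underline h'(t)\leq\mbox{flux}$ analogous to Step~1 of the preceding proof, and likewise for $\underline g'$.

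The interior PDE inequalities split into two cases. On the flat top, $\underline u_t=\underline v_t=0$, the nonlocal diffusion gives $\int J_i\underline u-\underline u\geq -\epsilon\underline u$ etc., and by \eqref{linear} the reaction dominates, so the inequalities are immediate. The delicate part is the boundary layer: with $L(t)$ matched to $\underline h'(t)$, the time derivatives $\underline u_t,\underline v_t$ are of order $\underline h'/L=O(1)$. For the $\underline v$-equation, the interior portion of $\int J_2\underline v$ gives a lower bound of order $\delta_2 K_1/(L(t)-s)$ at the point $x=\underline h(t)-s$, which together with the nonnegative term $G(\underline u)$ absorbs $\underline v_t$. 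For the $\underline u$-equation, one needs the coupling term $a_{12}\int K\,\underline v$ to provide a positive contribution even when $\underline u$ is small; this is arranged by taking $\underline u$'s boundary layer strictly inside $\underline v$'s flat region (i.e.\ designing $\underline u$ and $\underline v$ with slightly different profiles, $L_u<L_v$, with $L_v-L_u$ fixed large enough that $\int_{L_v-L_u}^{\infty}K\geq c_K>0$), so that $a_{12}\int K\,\underline v\geq a_{12}c_K\delta_2$ throughout the relevant range.

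With both interior inequalities and the free-boundary inequality verified, the comparison principle (Remark~2.4 of \cite{WangDu-JDE}, which accommodates the mild jumps in $\underline u_t,\underline v_t$ along the interfaces of the piecewise profile, as in Lemma~\ref{lemma1-2}) then yields $h(t+t_0)\geq\underline h(t)$ and $-g(t+t_0)\geq\underline h(t)$ for some large $t_0$, whence $-g(t),h(t)\geq D\,t\ln t$ for all large $t$. The main obstacle is the coordinated choice of $L_u$ and $L_v$ together with $C_1,\sigma_1,\delta_1,\delta_2$: $L(t)$ must be small enough so that $\ln(\underline h/L)\asymp \ln t$ survives in the flux, yet large enough that $\underline u_t,\underline v_t\lesssim\underline h'/L$ are absorbed by the available positive terms in the right-hand sides; matching these two opposing demands is precisely what forces the logarithmic correction in the acceleration rate.
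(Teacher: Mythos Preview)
Your overall architecture is right, but the choice $L(t)\asymp\ln t$ (or any polylogarithmic width) leaves a genuine gap in the $\underline v$-inequality that your sketch does not close. Write $\xi:=(\underline h(t)-|x|)/L(t)\in[0,1]$ in the boundary layer. With $L\asymp\ln t$ and $\underline h'\asymp\ln t$ you correctly note $\underline h'/L\asymp 1$, so $\underline v_t\asymp \delta_2\lambda\,\xi^{\lambda-1}$. For $\xi$ bounded away from $0$ and $1$ (say $\xi\in[c_0,1/2]$ with $c_0$ small but fixed) this is a \emph{fixed positive constant}. On the right-hand side, the reaction contributes $-a_{22}\underline v+G(\underline u)\asymp\rho\,\xi^{\lambda}$, one order smaller in $\xi$; the flat-region contribution to $\int J_2\underline v$ is $\asymp K_1\delta_2/((1-\xi)L(t))\to 0$; and Proposition~\ref{prop2.3} only gives $\int J_2\underline v\geq(1-\epsilon)\underline v$, which yields $d_2\int J_2\underline v-d_2\underline v\geq -\epsilon d_2\underline v$, not a positive gain. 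Thus for $\xi\in[c_0,1/2]$ and $t$ large, $\underline v_t$ cannot be absorbed, no matter how small $C_1$ is chosen relative to $C_L$ (shrinking $C_1/C_L$ only shrinks the threshold $c_0$, it does not eliminate the mismatch $\xi^{\lambda-1}$ vs.\ $\xi^{\lambda}$).

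The paper resolves this by two coupled choices that your sketch does not make. First, the layer width is $L(t)=(t+\sigma_2)^{\beta}$ with $\beta\in(0,1/2)$ and $\lambda>1/\beta$; then $\underline h'/L\asymp \ln t/t^{\beta}\to 0$, so $\underline v_t$ itself decays. Second, $\underline v$ (and only $\underline v$) carries an extra factor $\Psi_2(x,t)$ which freezes the last power of $(\underline h-|x|)/L$ at the level $\Psi_2\asymp\ln t/t^{\beta}$ on the outermost strip $\psi_2(t)\leq|x|\leq\underline h(t)$. On that strip one gets $\underline v_t\lesssim(\ln t/t^{\beta})^{\lambda}$, while the flat-region part of $\int J_2\underline v$ gives $\gtrsim 1/\underline h(t)\asymp 1/(t\ln t)$; the condition $\lambda\beta>1$ is exactly what makes the former $o$ of the latter. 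The same $\Psi_2$-asymmetry also guarantees $\underline u\leq s\delta_1$, $\underline v=s\delta_2$ with the \emph{same} $s$ on $[\psi_2,\underline h]$, so that $-a_{11}\underline u+a_{12}\underline v\geq\rho(\underline u+\underline v)$ and $\underline u_t\leq C\underline v$ hold there; your ``$L_u<L_v$'' device is aimed at a similar asymmetry, but as stated (``$\underline u$'s boundary layer strictly inside $\underline v$'s flat region'') it is internally inconsistent and in any case does not address the $\underline v$-equation. Note finally that the flux estimate works for any $L(t)=t^{\beta}$ with $\beta<1$, since $\ln(\underline h/L)\sim(1-\beta)\ln t$; so the flux side does not force a logarithmic layer, and the binding constraint is the interior inequality, which forces $L$ to be a genuine power of $t$.
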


\begin{proof}
Fix constants $\beta\in(0,1/2)$ and $\lambda> \frac{1}{\beta}>2$. Then define
\begin{equation*}
\begin{cases}
\underline{h}(t):=C_1(t+\sigma_2)\ln (t+\sigma_2),& t\geq0,\\
	\underline{u}(x,t):=\min\big\{1,\lf(\frac{\underline{h}(t)-|x|}{(t+\sigma_2)^{\beta}}\rr)^{\lambda}\big\}
	\delta_1, & x\in[-\underline{h}(t),\underline{h}(t)],~t\geq0,\\
	\underline{v}(x,t):=\min\big\{1,\Psi_2(t,x)\lf(\frac{\underline{h}(t)-|x|}{(t+\sigma_2)^{\beta}}\rr)^{\lambda-1}\big\}
	\delta_2, & x\in[-\underline{h}(t),\underline{h}(t)],~t\geq0,
	\end{cases}
\end{equation*}
where
\begin{equation*}
	\Psi_2(x,t):=
	\begin{cases}
		\dd	\frac{\underline h(t)-|x|}{(t+\sigma_2)^{\beta}},&|x|\leq \psi_2(t),\\[3mm]
		\dd	\frac{\underline h(t)-\psi_2(t)}{(t+\sigma_2)^{\beta}},& |x|\geq \psi_2(t),
	\end{cases}\ \ \  \ \psi_2(t):=\dd\frac{\underline h(t)-C_3\ln (t+\sigma_2)}{C_2(t+\sigma_2)^{-1}+1},
\end{equation*}
$(\delta_1,\delta_2)$ satisfies \eqref{linear}, \eqref{linear1},  while $C_1$, $C_2$, $C_3$
and $\sigma_2$ are positive constants to be determined.

We first show that, with suitable choices of the parameters,
\begin{equation}\label{lower-ac4}
\begin{aligned}
\underline{h}'(t)\leq\mu \int_{-\underline{h}(t)}^{\underline{h}(t)}\int^{+\infty}_{\underline{h}(t)}\Big(J_1(x-y)\underline{u}(x,t)+\rho J_2(x-y)\underline{v}(x,t)\Big)\rd y \rd x
\mbox{ for $t>0$,}
\end{aligned}
\end{equation}
which, combined with the facts that $(\underline{u}(x,t),\underline{v}(x,t))=(\underline{u}(-x,t),\underline{v}(-x,t))$
and $J_i(x)=J_i(-x)$ for $i=1,2$ and $x\in[-\underline{h}(t), \underline{h}(t)]$,  implies
\begin{align*}
	-\underline{h}'(t)\geq-\mu \int_{-\underline{h}(t)}^{\underline{h}(t)}\int_{-\infty}^{-\underline{h}(t)}\Big(J_1(x-y)\underline{u}(x,t)+\rho J_2(x-y)\underline v (x,t)\Big)\rd y \rd x
	\ \mbox{ for } t>0.
\end{align*}

By  the definitions of $\underline u$ and $\underline v$, we have
\begin{align*}
&\mu \int_{-\underline{h}(t)}^{\underline{h}(t)}\int^{+\infty}_{\underline{h}(t)}\Big(J_1(x-y)\underline{u}(x,t)+\rho J_2(x-y)\underline{v}(x,t)\Big) \rd y \rd x\\
&\geq \mu \int_{0}^{\underline{h}(t)-(t+\sigma_2)^{\beta}}\int^{+\infty}_{\underline{h}(t)}\rho J_2(x-y)\underline{v}(x,t) \rd y \rd x\\
&\geq \delta_2 \mu \int_{0}^{\underline{h}(t)-(t+\sigma_2)^{\beta}}\int^{+\infty}_{\underline{h}(t)}\rho J_2(x-y) \rd y \rd x\\
&=\delta_2 \mu \int_{-\underline{h}(t)}^{-(t+\sigma_2)^{\beta}}\int^{+\infty}_{0}\rho J_2(x-y)\rd y \rd x=\delta_2 \mu \rho \int_{(t+\sigma)^{\beta}}^{\underline{h}(t)}\int^{+\infty}_{x} J_2(y)\rd y \rd x.
	\end{align*}
By changing the order of integration, and then using \eqref{J_1J_2}, we obtain, for $\sigma_2\gg 1$ and $t>0$,
\begin{align*}
	&\int_{(t+\sigma)^{\beta}}^{\underline{h}(t)}\int^{+\infty}_{x} J_2(y)\rd y \rd x\\
	&= \Big[\int^{\underline{h}(t)}_{(t+\sigma_2)^{\beta}}\int^y_{(t+\sigma_2)^{\beta}}+\int
	^{\infty}_{\underline{h}(t)}\int^{\underline{h}(t)}_{(t+\sigma_2)^{\beta}}\Big] J_2(y) \rd x \rd y\\
	&\geq  \int^{\underline{h}(t)}_{(t+\sigma_2)^{\beta}}\int^y_{(t+\sigma_2)^{\beta}} J_2(y) \rd x \rd y\\
	&\geq K_1\int^{\underline{h}(t)}_{(t+\sigma_2)^{\beta}}\frac{y-(t+\sigma_2)^{\beta}}{y^2}dy\\
	&= K_1\Big[\ln(\underline{h}(t))-\beta\ln(t+\sigma_2)+\frac{(t+\sigma_2)^{\beta}}{\underline{h}(t)}-1\Big]\\	
	&\geq K_1\Big[\ln(\underline{h}(t))-\beta\ln(t+\sigma_2)-1\Big]\\
	&\geq  K_1(1-\beta)\big[\ln(t+\sigma_2)+1\big].
\end{align*}
Therefore
\begin{align*}
&\mu \int_{-\underline{h}(t)}^{\underline{h}(t)}\int^{+\infty}_{\underline{h}(t)}\Big(J_1(x-y)\underline{u}(x,t)+\rho J_2(x-y)\underline{v}(x,t)\Big) \rd y \rd x\\
\geq &\ \delta_2 \mu K_1\rho(1-\beta)\big[\ln(t+\sigma_2)+1\big]\\
\geq&\ C_1\big[\ln(t+\sigma_2)+1\big]=\underline{h}'(t) \mbox{ for } t>0
\end{align*}
provided that $\sigma_2\gg 1$ and
\begin{align*}
	C_1\leq \delta_2\mu K_1\rho(1-\beta).
\end{align*}
This proves \eqref{lower-ac4}.

Next we show that by adding further restrictions on the parameters, we have, for $|x|\in [0, \underline h(t)]\setminus \{\underline h(t)- (t+\sigma_2)^{\beta},\ \psi_2(t)\}$ and $t>0$,
\begin{equation}\label{3.14}
	\begin{cases}
		\dd\underline u_t\leq d_1\int^{\underline h(t)}_{-\underline h(t)}J_1(x-y)\underline{u}(y,t)dy-d_1\underline{u}
		-a_{11}\underline{u}+a_{12}\int^{\underline h(t)}_{-\underline h(t)}K(x-y)\underline{v}(y,t)dy,\\[3mm]
		\dd\underline{v}_t\leq d_2\int^{\underline h(t)}_{-\underline h(t)}J_2(x-y)\underline{v}(y,t)dy-d_2\underline{v}
		-a_{22}\underline{v}+G(\underline{u}).
	\end{cases}
\end{equation}
%for any $t>0$, $(\underline{u}_t(x,t),\underline{v}_t(x,t))$ exists in $[-\underline{h}(t),\underline{h}(t)]$ except for $|x|=\underline{h}(t)-(t+\sigma)^{\sigma}$.

We will establish \eqref{3.14} in three steps, where $\sigma_2\gg 1$ is always assumed.

{\bf Step 1}. Preliminary estimates  of $\underline u_t$ and $\underline v_t$.

Note that   $\underline u$ and $\underline v$ are piece-wisely differentiable with respect to $t$, with $\underline u_t$ having a jumping discontinuity at $|x|=\underline h(t)-(t+\sigma_2)^\beta$, and $\underline v_t$ having jumping discontinuities at $|x|=\underline h(t)-(t+\sigma_2)^\beta$ and at $|x|=\psi_2(t)$.
Let us note that $\sigma_2\gg 1$ implies
\[
\underline h(t)-(t+\sigma_2)^\beta<\psi_2(t) \mbox{ for } t>0.
\]
  Then clearly,
\begin{align*}
	\underline{u}_t(x,t)=\underline{v}_t(x,t)=0\ \mbox{ for } \ 0\leq |x|<\underline{h}(t)-(t+\sigma_2)^{\beta}.
\end{align*}
For $\underline{h}(t)-(t+\sigma_2)^{\beta}<|x|\leq \underline{h}(t)$,
\begin{equation}
\begin{aligned}\label{3.16u}
\underline{u}_t(x,t)=&\ \lambda \delta_1\lf(\frac{\underline{h}(t)-|x|}{(t+\sigma_2)^{\beta}}\rr)^{\lambda-1}\lf(\frac{C_1\big[(1-\beta)\ln(t+\sigma_2)+1\big]}{(t+\sigma_2)^{\beta}}
+\frac{\beta|x|}{(t+\sigma_2)^{\beta+1}}\rr)\\
\leq&\ \lambda \delta_1\lf(\frac{\underline{h}(t)-|x|}{(t+\sigma_2)^{\beta}}\rr)^{\lambda-1}\lf(\frac{C_1\ln(t+\sigma_2)}{(t+\sigma_2)^{\beta}}
+\frac{|x|}{(t+\sigma_2)^{\beta+1}}\rr).
\end{aligned}
\end{equation}
For  $\underline{h}(t)-(t+\sigma_2)^{\beta}<|x|< \psi_2(t)$, $\underline v$ is a constant multiple of $\underline u$, and so
\begin{equation*}
	\underline{v}_t(x,t)\leq \lambda \delta_2\lf(\frac{\underline{h}(t)-|x|}{(t+\sigma_2)^{\beta}}\rr)^{\lambda-1}\lf(\frac{C_1\ln(t+\sigma_2)}{(t+\sigma_2)^{\beta}}
	+\frac{|x|}{(t+\sigma_2)^{\beta+1}}\rr).
 \end{equation*}
For $\psi_2(t)<|x|\leq \underline{h}(t)$ and $\sigma_2\gg 1$,
\begin{align*}
	\Psi_2(x, t)=&\ \frac{\underline h(t)-\psi_2(t)}{(t+\sigma_2)^{\beta}}=\dd\frac{(C_1C_2+ C_3)\ln (t+\sigma_2)}{(t+\sigma_2)^{\beta}[C_2(t+\sigma_2)^{-1}+1]}\leq (C_1C_2+C_3)\frac{\ln (t+\sigma_2)}{(t+\sigma_2)^\beta},\label{Psi2}\\
	 (\Psi_2)_t(x,t)=&\ \dd (C_1C_2+C_3)\frac{ [1-\beta \ln (t+\sigma_2)] (t+\sigma_2) +[C_2+(1-\beta) \ln (t+\sigma_2)]}{(t+\sigma_2)^{\beta+2}[C_2(t+\sigma_2)^{-1}+1]^2}< 0,\nonumber
\end{align*}
and therefore,
\begin{equation}\begin{aligned}\label{3.15}
	\underline{v}_t(x,t)=&\ (\Psi_2)_t \delta_2\lf(\frac{\underline{h}(t)-|x|}{(t+\sigma_2)^{\beta}}\rr)^{\lambda-1}\\
	&+\Psi_2\delta_2(\lambda-1)\lf(\frac{\underline{h}(t)-|x|}{(t+\sigma_2)^
{\beta}}\rr)^{\lambda-2}\lf(\frac{C_1\big[(1-\beta)\ln(t+\sigma_2)+1\big]}{(t+\sigma_2)^{\beta}}
	+\frac{\beta |x|}{(t+\sigma_2)^{\beta+1}}\rr)\\
	\leq &\ \Psi_2\delta_2\lambda \lf(\frac{\underline{h}(t)-|x|}{(t+\sigma_2)^{\beta}}\rr)^{\lambda-2}\lf(\frac{C_1\ln(t+\sigma_2)}{(t+\sigma_2)^{\beta}}
	+\frac{|x|}{(t+\sigma_2)^{\beta+1}}\rr)\\
	\leq &\ \delta_2\lambda \Psi_2^{\lambda-1}\lf(\frac{C_1\ln(t+\sigma_2)}{(t+\sigma_2)^{\beta}}
	+\frac{\underline h(t)}{(t+\sigma_2)^{\beta+1}}\rr)\\
	\leq&\ 2C_1\delta_2\lambda\left(C_1C_2+C_3\right)^{\lambda-1}\left(\frac{\ln (t+\sigma_2)}{(t+\sigma_2)^{\beta}}\right)^\lambda.
\end{aligned}
\end{equation}

With these preparation, we are ready to obtain some easy to use upper bounds for $\underline u_t$ and $\underline v_t$ in the next step.

{\bf Step 2}. For any fixed $C>0$ we can find  suitable parameters $C_1, C_2, C_3$ and $\sigma_2$ such  that for  $t> 0$ and $|x|\in [0, \underline h(t)]\setminus \{\underline h(t)- (t+\sigma_2)^{\beta},\psi_2(t)\}$,
\begin{align}\label{3.17}
	\underline u_t(x,t)\leq  C\max\{\underline u(x,t),\underline v(x,t)\},\ \ \underline v_t(x,t)\leq  C\max\lf\{\underline v(x,t),\int_{-\underline h (t)}^{\underline h(t)} J_2(x-y) \underline v(y,t) \rd y\rr\}.
\end{align}

The proof of \eqref{3.17} will be carried out according to three cases: $0\leq |x|<\underline{h}(t)-(t+\sigma_2)^{\beta}$, $\underline{h}(t)-(t+\sigma_2)^{\beta}<|x|<
\psi_2(t)$ and $\psi_2(t)<|x|<\underline h (t)$.

For $0\leq |x|<\underline{h}(t)-(t+\sigma_2)^{\beta}$, we have $\underline u_t=\underline v_t=0$ and hence  \eqref{3.17} holds trivially.

 For $\underline{h}(t)-(t+\sigma_2)^{\beta}<|x|<\psi_2(t)$, by \eqref{3.16u} we have, using $\sigma_2\gg 1$,
 \begin{align*}
 	&\underline u_t-C\underline u\\
 	\leq &\
  \lambda \delta_1\lf(\frac{\underline{h}(t)-|x|}{(t+\sigma_2)^{\beta}}\rr)^{\lambda-1}\lf(\frac{C_1\ln(t+\sigma_2)}{(t+\sigma_2)^{\beta}}
 	+\frac{|x|}{(t+\sigma_2)^{\beta+1}}\rr)-C\delta_1\lf(\frac{\underline{h}(t)-|x|}{(t+\sigma_2)^{\beta}}\rr)^{\lambda}\\
 	\leq &\ \delta_1\lf(\frac{\underline{h}(t)-|x|}{(t+\sigma_2)^{\beta}}\rr)^{\lambda-1}\lf(\frac{\lambda C_1\ln(t+\sigma_2)}{(t+\sigma_2)^{\beta}}
 	+\frac{\lambda \underline h(t)}{(t+\sigma_2)^{\beta+1}}-C\frac{\underline h(t)-\psi_2(t)}{(t+\sigma_2)^{\beta}}\rr)\\
 	=&\ \frac{\delta_1}{(t+\sigma_2)^\beta}\lf(\frac{\underline{h}(t)-|x|}{(t+\sigma_2)^{\beta}}\rr)^{\lambda-1}[2\lambda C_1\ln(t+\sigma_2)-C\frac{(C_1C_2+C_3)\ln (t+\sigma_2)}{C_2(t+\sigma_2)^{-1}+1}]\\
	\leq& \ \frac{\delta_1}{(t+\sigma_2)^\beta}\lf(\frac{\underline{h}(t)-|x|}{(t+\sigma_2)^{\beta}}\rr)^{\lambda-1}\ln (t+\sigma_2)[2\lambda C_1-C(C_1C_2+C_3)/2]<0
 \end{align*}
provided that $\sigma_2\gg 1$ and
\[
\frac{4\lambda C_1}{C_1C_2+C_3}<C.
\]
Noting that in the range $\underline{h}(t)-(t+\sigma_2)^{\beta}<|x|<\psi_2(t)$, the function $\underline v(t,x)$ is a constant multiple of $\underline u(t,x)$,
we similarly have
\[
\underline v_t-C\underline v\leq 0.
\]

For $\psi_2(t)<|x|<\underline h(t)$, by \eqref{3.16u} and the definitions of  $\underline v$ and $\Psi_2$, we obtain
\begin{align*}
	&\underline u_t-  C\max\{\underline u,\underline v\}\leq \underline u_t-  C\underline v\\
	\leq &\ \lambda \delta_1\lf(\frac{\underline{h}-|x|}{(t+\sigma_2)^{\beta}}\rr)^{\lambda-1}\lf(\frac{C_1\ln(t+\sigma_2)}{(t+\sigma_2)^{\beta}}
	+\frac{ \underline h}{(t+\sigma_2)^{\beta+1}}\rr)\\
	&-C\delta_2\lf(\frac{\underline{h}-|x|}{(t+\sigma_2)^{\beta}}\rr)^{\lambda-1}\dd\frac{ (C_1C_2+C_3)\ln (t+\sigma_2)}{(t+\sigma_2)^{\beta}[C_2(t+\sigma_2)^{-1}+1]}\\
	=&\ \frac{ \ln (t+\sigma_2)}{(t+\sigma_2)^{\beta}}\lf(\frac{\underline{h}-|x|}{(t+\sigma_2)^{\beta}}\rr)^{\lambda-1}\lf[2\lambda \delta_1 C_1
	-C\frac{ (C_1C_2+C_3)\delta_2}{C_2(t+\sigma_2)^{-1}+1}\rr]\\
	\leq&\ \frac{ \ln (t+\sigma_2)}{(t+\sigma_2)^{\beta}}\lf(\frac{\underline{h}-|x|}{(t+\sigma_2)^{\beta}}\rr)^{\lambda-1}\lf[2\lambda \delta_1 C_1
	-C(C_1C_2+C_3)\delta_2/2\rr]\leq 0
\end{align*}
 if
 \begin{align*}
 	\frac{4\lambda  C_1}
	{C_1C_2+C_3}<\frac{\delta_2}{\delta_1}C.
 \end{align*}

It remains to prove the desired upper bound for $\underline v_t$ for $\psi_2(t)<x<\underline h(t)$.
Note that $\frac{\psi_2(t)}{\underline h (t)}\to 1$ uniformly for $t>0$ as $\sigma_2\to \infty$.  It follows that for $\psi_2(t)<x<\underline h(t)$ and $\sigma_2\gg 1$,
\begin{align*}
	&\int_{-\underline{h}(t)}^{\underline{h}(t)}J_2(x-y)\underline{v}(y,t)\rd y=\int_{-\underline{h}(t)-x}^{\underline{h}(t)-x}J_2(y)\underline{v}(x+y,t)\rd y\\
	\geq& \int_{-\underline h(t)/2}^{-\underline{h}(t)/4}J_2(y)\underline{v}(x+y,t)\rd y\geq\delta_2 \int_{-\underline h(t)/2}^{-\underline{h}(t)/4}J_2(y)\rd y
\end{align*}
since $|x+y|\leq 4\underline h(t)/5$ for $x$ close to $\underline h(t)$ and  $y\in [-\underline h(t)/2,-\underline h(t)/4]$,  which leads to $\underline v(x+y,t)=\delta_2$.
We may now use \eqref{J_1J_2} to obtain
\begin{equation*}\begin{aligned}
	&\int_{-\underline{h}(t)}^{\underline{h}(t)}J_2(x-y)\underline{v}(y,t)\rd y\geq \delta_2 \int_{-\underline h(t)/2}^{-\underline{h}(t)/4}J_2(y)\rd y\\
	&\geq K_1\delta_2 \int_{-\underline h(t)/2}^{-\underline{h}(t)/4}y^{-2}\rd y=\frac{2K_1\delta_2}{\underline h(t)}.
\end{aligned}\end{equation*}
Therefore, for $\psi_2(t)<x<\underline h(t)$ and $\sigma_2\gg 1$, by \eqref{3.15},
\begin{align*}
	&\underline v_t-C\int_{-\underline{h}(t)}^{\underline{h}(t)}J_2(x-y)\underline{v}(y,t)\rd y\\
	&\leq 2C_1\delta_2\lambda\left(C_1C_2+C_3\right)^{\lambda-1}\left(\frac{\ln (t+\sigma_2)}{(t+\sigma_2)^{\beta}}\right)^\lambda
	-C\frac{2K_1\delta_2}{\underline h(t)}\\
	&= 2C_1\delta_2\lambda\left(C_1C_2+C_3\right)^{\lambda-1}\left(\frac{\ln (t+\sigma_2)}{(t+\sigma_2)^{\beta}}\right)^\lambda
	-\frac{2CK_1\delta_2}{C_1(t+\sigma_2)\ln (t+\sigma_2)}\\
	&<0\ \mbox{ since $ \lambda\beta >1$. }
\end{align*}

{\bf Step 3}. We prove \eqref{3.14}.

In view of \eqref{3.17} proved in
 Step 2, we only need to find a constant  $\widetilde C>0$ such that
\begin{align}\label{3.18}
	 d_1\int^{\underline h(t)}_{-\underline h(t)}J_1(x-y)\underline{u}(y,t)dy-d_1\underline{u}
	-a_{11}\underline{u}+a_{12}\int^{\underline h(t)}_{-\underline h(t)}K(x-y)\underline{v}(y,t)dy
	\geq \widetilde C\lf(\underline u+\underline v\rr)
\end{align}
and
\begin{align}\label{3.19}
	d_2\int^{\underline h(t)}_{-\underline h(t)}J_2(x-y)\underline{v}(y,t)dy-d_2\underline{v}
	-a_{22}\underline{v}+G(\underline{u})
	\geq\widetilde C\lf(\underline v+\int^{\underline h(t)}_{-\underline h(t)}J_2(x-y)\underline{v}(y,t)dy\rr).
\end{align}

The proof is similar to that of \eqref{lower2.6} and \eqref{lower2.7} in the proof of Lemma \ref{lemma1-2}.  First by  \eqref{linear} we have
\begin{equation*}
	\begin{cases}
		-a_{11}\underline{u}(x,t)+a_{12}\underline{v}(x,t)\geq  \rho (\underline{u}+\underline{v}), &|x|\leq \psi_2(t),\\
		-a_{22}\underline{v}(x,t)+G(\underline{u})\geq  \rho (\underline{u}+\underline{v}),&|x|\leq \psi_2(t)
	\end{cases}
\end{equation*}
for some constant $\rho>0$. For $ \psi_2(t)\leq |x|\leq \underline h(t)$, we have
\[
\underline u\leq s\delta_1,\ \underline v=s\delta_2 \mbox{ with } s:=\min\Big\{1,\Psi_2(t,x)\lf(\frac{\underline{h}(t)-|x|}{(t+\sigma_2)^{\beta}}\rr)^{\lambda-1}\Big\}.
\]
Using this and  \eqref{linear}  we easily obtain, as before,
\begin{align*}
	-a_{11}\underline{u}(x,t)+a_{12}\underline{v}(x,t)\geq  \rho (\underline{u}+\underline{v}) \mbox{ for }  \psi_2(t)\leq |x|\leq \underline h(t).
\end{align*}

We next estimate the left hand side of  \eqref{3.18} and \eqref{3.19} by using Example \ref{example2} (c) with
\begin{align*}
	L(t)=\underline h(t), \ \  \tilde \eta_1(t)=\frac{\underline h(t)}{(t+\sigma_2)^{\beta}},\ \  \tilde\eta_2(t)=\frac{\psi_2(t)}{\underline h(t)}\ {\rm or}\ 1,
\end{align*}
Clearly
 $\underline u(x,t)$ meets the conditions for $\phi$ there with $\eta_2(t)=1$, and  $\underline v(x,t)$ meets the conditions for $\phi$ there with $\eta_2(t)=\frac{\psi_2(t)}{\underline h(t)}$. Since
\begin{align*}
	\frac{L(t)}{\tilde\eta_1(t)}=(t+\sigma_2)^{\beta}\to \infty \mbox{ as }  \sigma_2\to \infty,
\end{align*}
 we can apply Example \ref{example2} (c) to conclude that,   for any given small $\epsilon>0$, there is large $h_*=h_*(\epsilon)$ such that for $\sigma_2\geq h_*$, all $|x|\leq \underline h(t)$ and $t>0$, we have
\begin{align*}
	&\int^{\underline h(t)}_{-\underline h(t)}J_1(x-y)\underline{u}(y,t)dy\geq (1-\epsilon)\underline u(x,t),\\
	&\int^{\underline h(t)}_{-\underline h(t)}J(x-y)\underline{v}(y,t)dy\geq (1-\epsilon)\underline v(x,t) \mbox{ for } J\in\{J_2, K\},
\end{align*}
which imply  \eqref{3.18} and \eqref{3.19} in the same way as in the proof of Lemma \ref{lemma1-2}.
 This completes the proof of Step 3.

\medskip

Finally, we apply the comparison principle as in the proof of Lemma \ref{lemma1-2} to complete the proof. It only remains to check that the inequalities required for the initial functions and boundary conditions are satisfied. But these can be done in exactly the same way as in the proof of Lemma \ref{lemma1-2}.
\end{proof}

\subsection{Upper bound for $h(t)$ and $-g(t)$}
\begin{lemma}\label{lemma-upper}
	Suppose  $(\mathbf{J})$ holds and $\alpha\in (1, 2]$ in \eqref{J_1J_2}.    If spreading occurs to \eqref{1}, then there exists  $\tilde D:=\tilde D(\alpha)>0$ such that for all large $t>0$,
	\begin{equation}\label{hg-upper1}
			-g(t),\ h(t)\leq \begin{cases}\tilde D\, t^{\frac{1}{\alpha-1}}&  {\rm if}\ \alpha\in(1,2),\\
			 \tilde D\, t\ln t&   {\rm if}\ \alpha=2.
		\end{cases}
	\end{equation}
\end{lemma}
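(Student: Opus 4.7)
My plan is to construct a simple upper solution $(\bar u,\bar v,-\bar h,\bar h)$ of \eqref{1} for which $\bar u$ and $\bar v$ are positive constants, and then invoke the (cooperative) comparison principle of \cite{WangDu-JDE} to deduce $h(t_0+t)\leq \bar h(t)$ and $g(t_0+t)\geq -\bar h(t)$ for large $t_0$, where $\bar h$ grows at exactly the rate claimed in \eqref{hg-upper1}. Concretely, condition $(G2)$ and the fact that $G(u)/u$ is strictly decreasing allow us to pick constants $M_1>u^*$ and $M_2>v^*$ such that
\begin{equation*}
G(M_1)/a_{22}\leq M_2\leq (a_{11}/a_{12})M_1,
\end{equation*}
from which $-a_{11}M_1+a_{12}M_2\leq 0$ and $-a_{22}M_2+G(M_1)\leq 0$. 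With $\bar u(x,t)\equiv M_1$, $\bar v(x,t)\equiv M_2$, $\int J_i,\int K\leq 1$ imply that the reaction--diffusion inequalities required of an upper solution hold automatically (with $\bar u_t=\bar v_t=0$).

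The remaining ingredient is to choose $\bar h(t)$ so that the free boundary inequality
\begin{equation*}
\bar h'(t)\geq \mu M_1 I_1(\bar h(t))+\mu\rho M_2 I_2(\bar h(t)),\qquad I_i(L):=\int_{-L}^L\int_L^{\infty}J_i(x-y)\,dy\,dx,
\end{equation*}
is satisfied. A Fubini--type rearrangement gives $I_i(L)=\int_0^{\infty}J_i(z)\min(z,2L)\,dz$. Since $J_2$ is dominating and satisfies $(\mathbf{P_\alpha})$, one has $J_1\leq K_3 J_2$ and $J_2(z)\leq K_2\max\{1,z^{-\alpha}\}$, and splitting the integral at $z=2L$ yields $I_i(L)\leq C\,L^{2-\alpha}$ when $\alpha\in(1,2)$ and $I_i(L)\leq C(\ln L+1)$ when $\alpha=2$, for all large $L$. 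Hence it suffices to require
\begin{equation*}
\bar h'(t)\geq A\bar h(t)^{2-\alpha}\ \text{ when }\alpha\in(1,2),\qquad \bar h'(t)\geq A\ln \bar h(t)\ \text{ when }\alpha=2,
\end{equation*}
for a suitable $A>0$. Solving the first ODE gives $\bar h(t)=[A(\alpha-1)(t+T_0)]^{1/(\alpha-1)}$, and for the second the ansatz $\bar h(t)=A_1(t+T_0)\ln(t+T_0)$ with $A_1$ large enough works because $\bar h'(t)=A_1(\ln(t+T_0)+1)$ while $\ln\bar h(t)\sim\ln(t+T_0)$.

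To initiate the comparison, use the spreading hypothesis: since $(u,v)\to(u^*,v^*)$ locally uniformly in $\R$ and $M_i>\max(u^*,v^*)$, one can pick $t_0\gg 1$ so that $u(\cdot,t_0)\leq M_1$ and $v(\cdot,t_0)\leq M_2$ on $[g(t_0),h(t_0)]$, and also choose the initial width $\bar h(0)$ large enough that $[g(t_0),h(t_0)]\subset[-\bar h(0),\bar h(0)]$ (this simply fixes $T_0$). Then the cooperative structure of the system (the off-diagonal reactions $a_{12}K\ast v$ and $G(u)$ are both increasing in their arguments) allows the comparison principle (Remark~2.4 of \cite{WangDu-JDE}) to be applied, delivering $h(t_0+t)\leq\bar h(t)$ and $-g(t_0+t)\leq\bar h(t)$, which is \eqref{hg-upper1}. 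The only delicate point is the tail estimate for $I_2(L)$ in the $\alpha=2$ case, where the boundary term $2L\int_{2L}^{\infty}J_2$ must be carefully bounded by an $O(1)$ constant so that the $\ln L$ contribution from $\int_1^{2L}zJ_2(z)\,dz$ dominates; this is the main (mild) obstacle, and it matches the asymmetric growth rate $t\ln t$ stated in the lemma.
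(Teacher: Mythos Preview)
Your approach is essentially the same as the paper's: constant upper solutions $(\bar u,\bar v)$ chosen so that the reaction terms are nonpositive, plus a barrier $\bar h(t)$ whose growth rate matches the tail integral $I_i(L)=\int_0^\infty J_i(z)\min(z,2L)\,dz$, then comparison. Your Fubini rearrangement and the tail estimates for $\alpha\in(1,2)$ and $\alpha=2$ are correct and match the paper's computation line by line.

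There is, however, a genuine gap in your initialization step. You write that because $(u,v)\to(u^*,v^*)$ locally uniformly in $\R$, one can pick $t_0\gg1$ so that $u(\cdot,t_0)\leq M_1$ and $v(\cdot,t_0)\leq M_2$ on $[g(t_0),h(t_0)]$. Local uniform convergence does not give this: the domain $[g(t_0),h(t_0)]$ is expanding to all of $\R$, so convergence on compact sets says nothing about $u,v$ near the moving fronts. The spreading hypothesis is neither needed nor sufficient here. The fix is to start the comparison from $t=0$ instead. Since $u_0,v_0\in C([-h_0,h_0])$ are bounded, choose $M_1>\|u_0\|_\infty$ and $M_2>\|v_0\|_\infty$ still satisfying $G(M_1)/a_{22}\leq M_2\leq (a_{11}/a_{12})M_1$; this is possible because $G(z)/z$ is strictly decreasing below $a_{11}a_{22}/a_{12}$ for $z>u^*$, so the admissible interval for $M_2$ is nonempty and can be made as large as one likes by taking $M_1$ large. (The paper simply takes $M_1=Mu^*$, $M_2=Mv^*$ with $M>1$ large, which automatically satisfies both reaction inequalities since $-a_{11}u^*+a_{12}v^*=0$ and $G(Mu^*)\leq MG(u^*)=Ma_{22}v^*$ by the monotonicity of $G(z)/z$.) Then take $T_0$ large enough that $\bar h(0)\geq h_0$, and the comparison principle applies directly from $t=0$.
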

\begin{proof}
	We first consider the case $\alpha\in(1,2)$. Define
	\begin{equation*}
		\begin{aligned}
			&\bar{h}(t):=(Ct+\sigma)^{\frac{1}{\alpha-1}},~~\bar{g}(t)=-\bar{h}(t),\ \ \ \ \ \ \ \ \ \ t\geq 0,\\
			&\bar{u}(x,t):=M u^*,\ \ x\in[-\bar{h}(t),\bar{h}(t)],\ t\geq 0,\\
			&\bar{v}(x,t):=Mv^*,  \ \  x\in[-\bar{h}(t),\bar{h}(t)],\ t\geq 0,
		\end{aligned}
	\end{equation*}
	where $C$ and $\sigma$ are positive constants to be determined, and $M>1$ is such that $Mu^*>\|u_0\|_\infty,\ Mv^*>\|v_0\|_\infty$. Clearly, by the conditions on the function $G$, we have
	\begin{equation}\label{baruv}
		-a_{11}\bar{u}+a_{12}\bar{v}\leq 0 \mbox{ and }-a_{22}\bar{v}+G(\bar u)\leq 0.
	\end{equation}

	To apply the comparison principle, we first check
	\begin{equation}\label{upper-ac3}
		\begin{cases}
		\dd	\bar{h}'(t)\geq\mu \int_{-\bar{h}(t)}^{\bar{h}(t)}\int^{+\infty}_{\bar{h}(t)}\Big(J_1(x-y)\bar{u}(x,t)+\rho J_2(x-y)\bar{v}(x,t)\Big)dydx,\\[3mm]
			\dd -\bar{h}'(t)\leq-\mu \int_{-\bar{h}(t)}^{\bar{h}(t)}\int_{-\infty}^{-\bar{h}(t)}\Big(J_1(x-y)\bar{u}(x,t)+\rho J_2(x-y)\bar{v}(x,t)\Big)dydx.\\
		\end{cases}
	\end{equation}
	Choosing $\sigma>0$ large enough such that $\bar{h}(t)>1$, together with \eqref{J_1J_2}, we deduce
	\begin{equation}\begin{aligned}\label{3.24}
		&\int_{-\bar{h}(t)}^{\bar{h}(t)}\int^{+\infty}_{\bar{h}(t)}\Big(J_1(x-y)\bar{u}(x,t)+\rho J_2(x-y)\bar{v}(x,t)\Big) dydx\\
		\leq&\max\{\bar u, \bar v\}\int_{-2\bar{h}(t)}^{0}\int^{+\infty}_{0}[J_1(x-y)+\rho J_2(x-y)]dydx\\
		=&\max\{\bar u, \bar v\}\int^{2\bar{h}(t)}_{0}\int^{+\infty}_{x}[J_1(y)+\rho J_2(y)]dydx\\
		\leq& (K_3+\rho )\max\{\bar u, \bar v\}\int^{2\bar{h}(t)}_{0}\int^{+\infty}_{x} J_2(y) dydx\\
		=& (K_3+\rho )\max\{\bar u, \bar v\}\Big[\int^{2\bar{h}(t)}_0y J_2(y)dy+\int^{+\infty}_{2\bar{h}(t)}
		2\bar{h}(t)J_2(y)dy\Big]\\
		%\leq &C_1\max\{\bar u, \bar v\} \Big[\int^{2\bar{h}(t)}_0y \frac{K_1}{y^{\alpha}+1}dy+
		%2\bar{h}(t)\int^{+\infty}_{2\bar{h}(t)}\frac{K_1}{y^{\alpha}+1}dy\Big]\\
		\leq & (K_3+\rho )\max\{\bar u, \bar v\} \Big[\int^{1}_0K_2dy +\int^{2\bar{h}(t)}_1K_2 y^{1-\alpha}dy
		+2\bar{h}(t)\int^{+\infty}_{2\bar{h}(t)}\frac{K_2}{y^{\alpha}}dy\Big]\\
		\leq & (K_3+\rho )\max\{\bar u, \bar v\}K_2\Big[ 1+\frac{1}{2-\alpha}\big[(2\bar{h}(t))^{2-\alpha}-1\big]
		+\frac{(2\bar{h}(t))^{2-\alpha}}{\alpha-1}\Big].
	\end{aligned}
	\end{equation}
Hence for $C\gg 1$ and $\sigma\gg1$,
	\begin{align*}
		&\mu \int_{-\bar{h}(t)}^{\bar{h}(t)}\int^{+\infty}_{\bar{h}(t)}\Big(J_1(x-y)\bar{u}(x,t)+\rho J_2(x-y)\bar{v}(x,t)\Big)dydx\\
		\leq &\ \mu  (K_3+\rho )\max\{\bar u, \bar v\} K_2 \Big[1+\frac{2^{2-\alpha}}{(2-\alpha)(\alpha-1)}(Ct+\sigma)^{\frac{2-\alpha}{\alpha-1}}\Big]\\
		\leq&\  \frac{C}{\alpha-1}(Ct+\sigma)^{\frac{2-\alpha}{\alpha-1}}=\bar{h}'(t).
	\end{align*}
	Due to $J_i(x)=J_i(-x)$ and
	\[
	\bar{u}(x,t)=\bar{u}(-x,t),~~\bar{v}(x,t)=\bar{v}(-x,t) \mbox{ for }x\in[-\bar{h}(t),\bar{h}(t)],
	\]
	the second inequality of \eqref{upper-ac3} also holds.
	
	Since both $\bar u$ and $\bar v$ are constant,  by \eqref{baruv}, we obtain
	\begin{align*}
		& d_1\int^{\bar{h}(t)}_{-\bar{h}(t)}J_1(x-y)\bar u(y,t)dy-d_1\bar u-a_{11}\bar u+
		a_{12}\int^{\bar{h}(t)}_{-\bar{h}(t)}K(x-y)\bar v(y,t)dy\\
		 \leq &-a_{11}\bar u+a_{12}\bar v\leq 0=\bar u_t,\\
		&d_2\int^{\bar{h}(t)}_{-\bar{h}(t)}J_2(x-y)\bar v(y,t)dy-d_2\bar v-a_{22}\bar v+G(\bar u)\leq -a_{22}\bar v+G(\bar u)\leq 0=\bar v_t.
	\end{align*}

It remains to check the initial functions. 	Let $\sigma>0$ be large enough such that $\bar{h}(0)=\sigma^{\frac{1}{\alpha-1}}\geq h_0$. 	By the definitions of $\bar u$ and $\bar v$, clearly
\begin{align*}
	&\bar u(x,0)\geq u(x,0) \mbox{ and }\bar v(x,0)\geq v(x,0),\ \ \  x\in[-h_0,h_0],\\
		&\bar u(x,t)>0,~\bar v(x,t)> 0,\ \ \ \ \ \ \ \ \ \ \ \ \ \ \ \ \ \ \ \ \ \  x=\pm \bar h(t),\ t\geq 0.
\end{align*}
Therefore we can apply the comparison principle (see e.g., Lemma 2.3 in \cite{WangDu-JDE}) to conclude that
	\begin{equation*}
		[g(t), h(t)]\subset[-\bar h(t),\bar h(t)]\ \  {\rm and}\ \ \bar u\geq u,\ \bar v\geq  v \mbox{ for }t\geq0,\ x\in[g(t),h(t)].
	\end{equation*}
	which yields \eqref{hg-upper1} for $\alpha\in(1,2)$.
	
	For $\alpha=2$, we define
	\begin{equation*}
		\begin{aligned}
			&\bar{h}(t)=(Ct+\sigma)\ln(Ct+\sigma),~~\bar{g}(t)=-\bar{h}(t),\\
		\end{aligned}
	\end{equation*}
	and define $\bar u$ and $\bar v$ the same as in the case $\alpha\in(1,2)$.
	Now,  in the estimates of \eqref{3.24},  we use $\alpha=2$ and obtain
	\begin{align*}
		&\mu \int_{-\bar{h}(t)}^{\bar{h}(t)}\int^{+\infty}_{\bar{h}(t)}\Big(J_1(x-y)\bar{u}(x,t)+\rho J_2(x-y)\bar{v}(x,t)\Big)dydx\\
		\leq &(K_3+\rho )\mu \max\{\bar u, \bar v\}[ 2K_2+K_2\ln (2\bar h(t))]\\
		= &(K_3+\rho )\mu \max\{\bar u, \bar v\}( 2K_2+K_2\ln [2(Ct+\sigma)\ln(Ct+\sigma)])\\
		=&(K_3+\rho )\mu \max\{\bar u, \bar v\}( 2K_2+K_2\ln 2+K_2\ln (Ct+\sigma)+K_2\ln\ln(Ct+\sigma))\\
		\leq &C\ln(Ct+\sigma)+C=\bar{h}'(t)
	\end{align*}
	provided that  $C$ and $\sigma$ are sufficiently large.
	This implies that \eqref{upper-ac3} holds for $\alpha=2$.
	The rest of the proof is exactly the same as in the case of $\alpha\in(1,2)$, and we conclude that
	\eqref{hg-upper1} holds  when $\alpha=2$. The proof is finished.	
\end{proof}

\subsection{The case that $J_1$ is dominating}
In this subsection, we consider the remaining case that
  $J_1$ is the dominating kernel function.
Therefore, there exist three positive constants $\tilde{K}_1$, $\tilde{K}_2$ and $\tilde{K}_3$ such that
\begin{equation}\label{J_1dom}
\tilde{K}_1\leq \max\{1, |x|^\alpha\} J_1(x) \leq \tilde{K}_2 \ {\rm and}\  \frac{J_2(x)}{J_1(x)}\leq \tilde{K}_3 \mbox{ for }  x\in \R.
\end{equation}
We will show that the estimates in the previous subsections remain valid. The arguments are based on ideas  in the previous subsections, although not completely parallel. 

\begin{lemma}\label{J1-upper1}
Suppose that {\bf (J)} is satisfied,  \eqref{J_1dom} holds with $\alpha\in(1,2)$, and spreading happens to \eqref{1}. Then there exists $D:=D(\alpha)$ such that
\[
-g(t),~h(t)\geq D\, t^{\frac{1}{\alpha-1}} \mbox{ for all large }t>0.
\]

\end{lemma}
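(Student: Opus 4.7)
I would mirror the construction in Lemma~\ref{lemma1-2}, exchanging the roles of $\underline u$ and $\underline v$ so that the fat-tailed kernel $J_1$ drives the free boundary through the $\underline u$-component. Concretely, take $\underline h(t):=(C_1t+\sigma_1)^{1/(\alpha-1)}$, $\underline g(t):=-\underline h(t)$, and set
\[
\underline v(x,t):=\Bigl(\tfrac{\underline h(t)-|x|}{\underline h(t)}\Bigr)^{\lambda}\delta_2,\qquad \underline u(x,t):=\Psi_1(x,t)\Bigl(\tfrac{\underline h(t)-|x|}{\underline h(t)}\Bigr)^{\lambda-1}\delta_1,
\]
with $\Psi_1$ and $\psi_1$ defined exactly as in Lemma~\ref{lemma1-2} and $(\delta_1,\delta_2)$ the Perron--Frobenius vector from \eqref{linear1}. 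The proof then splits into the same three steps as in Lemma~\ref{lemma1-2}.

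\textbf{Free-boundary inequality.} The cap only raises $\underline u$ beyond the simple profile on $|x|\geq\psi_1$, so $\underline u(x,t)\geq\bigl(\tfrac{\underline h-|x|}{\underline h}\bigr)^{\lambda}\delta_1$ on $[-\underline h,\underline h]$. Retaining only the $\mu J_1\underline u$ contribution and repeating the double-integral computation of Lemma~\ref{lemma1-2} with $(J_2,\rho\delta_2)$ replaced by $(J_1,\delta_1)$, together with the tail lower bound $J_1(y)\geq \tilde K_1/|y|^{\alpha}$ from \eqref{J_1dom}, yields a lower bound of order $\underline h^{2-\alpha}$, matching $\underline h'(t)=\tfrac{C_1}{\alpha-1}\underline h^{2-\alpha}$ for $C_1$ small.

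\textbf{Interior inequalities and main difficulty.} On the inner piece $|x|\leq\psi_1(t)$, $(\underline u,\underline v)=s(\delta_1,\delta_2)$ with $s=\bigl(\tfrac{\underline h-|x|}{\underline h}\bigr)^{\lambda}$, so \eqref{linear} applies and the analysis proceeds as in Lemma~\ref{lemma1-2}. In the cap region $\psi_1<|x|<\underline h$, the $u$-equation is treated as the $v$-equation was in Lemma~\ref{lemma1-2}: the fat tail of $J_1$ supplies $\int J_1\underline u\geq M\underline u$ with $M\to\infty$ as $\sigma_1\to\infty$, absorbing $\underline u_t+(d_1+a_{11})\underline u$, while the nonnegative term $a_{12}\int K(x-y)\underline v\,\rd y$ is discarded. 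The genuinely new difficulty is the $v$-equation in the cap region: since $J_2$ is not assumed to have a fat tail, the diffusion integral only obeys $\int J_2\underline v\geq (1-\epsilon)\underline v$ (Example~\ref{example2}(a)) and cannot absorb $\underline v_t$ the way $\int J_2\underline v$ did in Lemma~\ref{lemma1-2}. Instead the absorbing reserve must come from $G(\underline u)$.

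\textbf{Resolving the obstacle and concluding.} The key observation is that on the cap region $\underline u/\underline v=(\Psi_1/\tfrac{\underline h-|x|}{\underline h})(\delta_1/\delta_2)\geq \delta_1/\delta_2$, and for $\sigma_1$ large the values of $\underline u$ are uniformly of order $C_2\underline h^{1-\alpha}$, hence small, so $G(\underline u)\geq(1-\eta)G'(0)\underline u$ with $\eta\to 0$ as $\sigma_1\to\infty$. The Perron--Frobenius identity (row~2 of $\mathbf A(\delta_1,\delta_2)^T=\rho_1(\delta_1,\delta_2)^T$) gives $G'(0)\delta_1=(a_{22}+\rho_1)\delta_2>a_{22}\delta_2$, so $G(\underline u)-a_{22}\underline v\geq\rho_1'\underline u$ for some $\rho_1'>0$; after tracking the factors $\bigl(\tfrac{\underline h-|x|}{\underline h}\bigr)^{\lambda-1}$ and $C_2\underline h^{1-\alpha}$, one obtains that $G(\underline u)-a_{22}\underline v$ dominates $\underline v_t+d_2\epsilon\underline v$ provided $C_2$ is large relative to $C_1$ (the opposite balancing to that in Lemma~\ref{lemma1-2}). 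Once all interior and boundary inequalities are established, checking the initial data inclusion as at the end of Lemma~\ref{lemma1-2} and invoking the comparison principle from \cite{WangDu-JDE} yields $-g(t),\ h(t)\geq\underline h(t)=(C_1t+\sigma_1)^{1/(\alpha-1)}$ for all large $t$.
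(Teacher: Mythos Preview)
Your proposal is correct and follows essentially the same approach as the paper: the same lower solution with the roles of $\underline u$ and $\underline v$ swapped, the same free-boundary estimate via $J_1$, and the same handling of the cap region, where $\underline v_t$ is absorbed by the reaction surplus $G(\underline u)-a_{22}\underline v$ (the paper phrases this as $\underline v_t\leq\epsilon(\underline u+\underline v)$ together with $G(\underline u)-a_{22}\underline v\geq\rho(\underline u+\underline v)$, using $\underline u=s\delta_1$, $\underline v\leq s\delta_2$ and \eqref{linear1} directly). Your parenthetical remark that the $C_2$-versus-$C_1$ balancing is ``opposite'' to Lemma~\ref{lemma1-2} is mistaken---both proofs require $C_2$ large relative to $C_1$---but this does not affect the argument.
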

\begin{proof}
Similar to the proof of Lemma \ref{lemma1-2},  we construct a suitable lower solution to \eqref{1}.
Let $\lambda\geq 2$ be a constant and define
	\begin{equation*}
		\begin{cases}
			\underline{h}(t):=(C_1t+\sigma_1)^{\frac{1}{\alpha-1}},~~\underline{g}(t):=-\underline{h}(t) &\mbox{ for }t\geq0,\\
		\underline{u}(x,t):=\Psi_1(x,t)\lf(\frac{\underline h(t)-|x|}{\underline h(t)}\rr)^{\lambda-1}\delta_1 &\mbox{ for } t\geq 0,\ |x|\leq \underline{h}(t),\\
	\underline{v}(x,t):=	\lf(\frac{\underline h(t)-|x|}{\underline h(t)}\rr)^{\lambda}\delta_2
&\mbox{ for }  t\geq 0,\ |x|\leq \underline{h}(t),
\end{cases}
\end{equation*}
			where
\begin{equation*}
	\Psi_1(x,t):=
	\begin{cases}
	\dd	\frac{\underline h(t)-|x|}{\underline h(t)},&|x|\leq \psi_1(t),\\[3mm]
	\dd	\frac{\underline h(t)-\psi_1(t)}{\underline h(t)},& |x|\geq \psi_1(t),
	\end{cases}\ \ \  \ \psi_1(t):=\dd\frac{\underline h(t)}{C_2\underline h^{1-\alpha}(t)+1},~t>0,
\end{equation*}
the constant pair $(\delta_1,\delta_2)$ satisfies \eqref{linear} and \eqref{linear1}, and the positive constants $C_1$, $C_2$  and $\sigma_1$ will be chosen later.

We first check the desired inequalities for $\underline{h}'(t)$ and $-\underline{h}'(t)$.
Due to \eqref{J_1dom}, direct calculations give
 \begin{equation*}\begin{aligned}
&\mu\int_{-\underline{h}(t)}^{\underline{h}(t)}\int^{+\infty}_{\underline{h}(t)}\Big(J_1(x-y)\underline{u}(x,t)+\rho J_2(x-y)\underline{v}(x,t)\Big)\rd y \rd x\\
 \geq&\ \mu\int_{0}^{\underline{h}(t)}\int^{+\infty}_{\underline{h}(t)}J_1(x-y)\underline{u}(x,t)\rd y \rd x\\
 \geq &\ \mu\int_{0}^{\underline{h}(t)}\int^{+\infty}_{\underline{h}(t)}J_1(x-y)\lf(\frac{\underline h(t)-x}{\underline h(t)}\rr)^{\lambda}\delta_1\rd y \rd x\\
=&\ \frac{\mu}{\underline{h}^{\lambda}(t)}\int^{0}_{-\underline{h}(t)}\int^{+\infty}_{0}J_1(x-y)(-x)^{\lambda}\delta_1\rd y \rd x\\
=&\ \frac{\mu \delta_1}{\underline{h}^{\lambda}(t)}\int_{0}^{\underline{h}(t)}\int^{+\infty}_{x}J_1(y)x^{\lambda}\rd y \rd x
=\frac{\mu \delta_1}{\underline{h}^{\lambda}(t)}\Big[\int_{0}^{\underline{h}(t)}\int^{y}_{0}+
\int^{+\infty}_{\underline{h}(t)}\int^{\underline{h}(t)}_{0}\Big]J_1(y)x^{\lambda}\rd x \rd y\\
\geq&\ \frac{\mu \delta_1}{\underline{h}^{\lambda}(t)}\int_{0}^{\underline{h}(t)}\int^{y}_{0}J_1(y)x^{\lambda}\rd x \rd y= \frac{\mu \delta_1}{(\lambda+1)\underline{h}^{\lambda}(t)}\int_{0}^{\underline{h}(t)}J_1(y)y^{\lambda+1}\rd y\\
\geq&\ \frac{\mu \delta_1 \tilde{K}_1}{(\lambda+1)\underline{h}^{\lambda}(t)}\int_{1}^{\underline{h}(t)}y^{\lambda+1-\alpha}\rd y
=\frac{\mu \delta_1 \tilde{K}_1(\underline{h}^{\lambda+2-\alpha}(t)-1)}{(\lambda+1)(\lambda+2-\alpha)\underline{h}^{\lambda}(t)}\\
\geq &\  \frac{\mu \delta_1 \tilde{K}_1\underline{h}^{2-\alpha}(t)}{2(\lambda+1)(\lambda+2-\alpha)}
=\frac{\mu \delta_1 \tilde{K}_1}{2(\lambda+1)(\lambda+2-\alpha)}(C_1t+\sigma_1)^{\frac{2-\alpha}{\alpha-1}}\\
\geq&\  \frac{C_1 }{\alpha-1}(C_1t+\sigma_1)^{\frac{2-\alpha}{\alpha-1}}=\underline{h}'(t),
 \end{aligned}\end{equation*}
provided that $\sigma_1\gg 1$ and
\[
C_1\leq \frac{(\alpha-1)\mu \delta_1 \tilde{K}_1}{2(\lambda+1)(\lambda+2-\alpha)}.
\]
The same argument also yields the desired inequality for $-\underline{h}'(t)$.

Next, we verify the inequalities described in \eqref{lower2.4} for $t>0$ and $|x|\in[0,\underline{h}(t)]\setminus \{\psi_1(t)\}$. 
To achieve this, the idea is to show that, with $\sigma_1\gg 1$, for any small $\epsilon>0$,
\begin{equation}\label{lower-inequal}
 \begin{cases}
\underline{u}_t(x,t)&\displaystyle \leq \epsilon \lf(\underline{u}+\int_{-\underline{h}(t)}^{\underline{h}(t)}J_1(x-y)\underline{u}(y,t)dy\rr)\\
&\displaystyle \leq d_1\int^{\underline h(t)}_{-\underline h(t)}J_1(x-y)\underline{u}(y,t)dy-d_1\underline{u}
	-a_{11}\underline{u}+a_{12}\int^{\underline h(t)}_{-\underline h(t)}K(x-y)\underline{v}(y,t)dy;\\
\underline{v}_t(x,t)& \displaystyle\leq \epsilon(\underline{v}+\underline{u})\leq d_2\int^{\underline h(t)}_{-\underline h(t)}J_2(x-y)\underline{v}(y,t)dy-d_2\underline{v}
	-a_{22}\underline{v}+G(\underline{u}),\\
 \end{cases}\end{equation}
where $|x|\in[0,\underline{h}(t)]\setminus \{\psi_1(t)\}$ and $t>0$.

We start by showing  that
for any fixed $\epsilon>0$, there exist suitable constants $C_1$ and $C_2$ such that for $\sigma_1\gg1$, $t\geq0$ and $|x|\in[0,\underline{h}(t)]\setminus\{\psi_1(t)\}$,
\begin{equation*}
\underline{u}_t(x,t)\leq \epsilon \lf(\underline{u}+\int_{-\underline{h}(t)}^{\underline{h}(t)}J_1(x-y)\underline{u}(y,t)dy\rr)
 \mbox{ and }\underline{v}_t(x,t)\leq \epsilon(\underline{v}+\underline{u}).
\end{equation*}

For $x\in[-\underline{h}(t),\underline{h}(t)]$ and $t>0$, we have
\begin{equation*}
 \begin{aligned}
\underline{v}_t(x,t)=\delta_2\lambda\lf(\frac{\underline h(t)-|x|}{\underline h(t)}\rr)^{\lambda-1}\frac{\underline{h}'(t)|x|}{\underline{h}^2(t)}=\frac{C_1\delta_2\lambda}{\alpha-1}\lf(\frac{\underline h(t)-|x|}{\underline h(t)}\rr)^{\lambda-1}\frac{|x|}{\underline{h}(t)}\underline{h}^{1-\alpha}(t).
 \end{aligned}\end{equation*}
We also have that for $ |x|< \psi_1(t)$,
\begin{equation*}
 \begin{aligned}
\underline{u}_t(x,t)=\frac{C_1\delta_1\lambda}{\alpha-1}\lf(\frac{\underline h(t)-|x|}{\underline h(t)}\rr)^{\lambda-1}\frac{|x|}{\underline{h}(t)}\underline{h}^{1-\alpha}(t);
 \end{aligned}\end{equation*}
while for
$\psi_1(t)\leq |x|\leq \underline{h}(t)$,
\begin{equation*}
 \begin{aligned}
\underline{u}_t(x,t)
	=\delta_1(\Psi_1)_t\lf(\frac{\underline h-|x|}{\underline h}\rr)^{\lambda-1}+\Psi_1\frac{C_1\delta_1(\lambda-1)}{\alpha-1}\lf(\frac{\underline h-|x|}{\underline h}\rr)^{\lambda-2}\frac{|x|}{\underline{h}}\underline{h}^{1-\alpha}.\\
 \end{aligned}\end{equation*}
For $\psi_1(t)\leq |x|\leq \underline{h}(t)$, since
\[
(\Psi_1)_t=\frac{\psi_1\underline{h}'-\psi'_1\underline{h}}{\underline{h}^2}
=\frac{C_2(1-\alpha)\underline{h}^{-\alpha}\underline{h}'}{(C_2\underline h^{1-\alpha}(t)+1)^2}<0 \mbox{ due to }\alpha\in(1,2) \mbox{ and }\underline h '>0,
\]
and
\[
\Psi_1=\frac{C_2\underline h^{1-\alpha}(t)}{C_2\underline h^{1-\alpha}(t)+1}\leq C_2 \underline{h}^{1-\alpha}(t),
\]
it follows that
\begin{equation}\label{primeu}
 \begin{aligned}
\underline{u}_t(x,t)\leq \frac{C_1C_2\delta_1(\lambda-1)}{\alpha-1}\lf(\frac{\underline h-|x|}{\underline h}\rr)^{\lambda-2}\frac{|x|}{\underline{h}}\underline{h}^{2-2\alpha} \mbox{ for }\psi_1(t)\leq |x|\leq \underline{h}(t).
 \end{aligned}\end{equation}

Thus, for $|x|< \psi_1(t)$,
\begin{equation*}
 \begin{aligned}
&\underline{u}_t(x,t)-\epsilon\underline{u}(x,t)\\
=&\ \frac{C_1\delta_1\lambda}{\alpha-1}\lf(\frac{\underline h(t)-|x|}{\underline h(t)}\rr)^{\lambda-1}\frac{|x|}{\underline{h}(t)}\underline{h}^{1-\alpha}(t)-\epsilon\lf(\frac{\underline h(t)-|x|}{\underline h(t)}\rr)^{\lambda}\delta_1\\
=&\ \epsilon\delta_1\lf(\frac{\underline h(t)-|x|}{\underline h(t)}\rr)^{\lambda-1}
\lf( \frac{C_1\lambda}{\epsilon(\alpha-1)}  \frac{|x|\underline{h}^{1-\alpha}(t)}{\underline{h}(t)}+ \frac{|x|}{\underline{h}(t)}-1 \rr)\\
\leq&\ \epsilon\delta_1\lf(\frac{\underline h(t)-|x|}{\underline h(t)}\rr)^{\lambda-1}
\lf[\lf( \frac{C_1\lambda}{\epsilon(\alpha-1)}\underline{h}^{1-\alpha}(t)+1\rr)\frac{1}{C_2\underline{h}^{1-\alpha}(t)+1}-1\rr]\leq0
 \end{aligned}\end{equation*}
if \[
C_2\geq \frac{C_1\lambda}{\epsilon(\alpha-1)}.
\]
Similarly, we see that
\[
\underline{v}_t(x,t)-\epsilon\underline{v}(x,t)\leq 0 \mbox{ for }|x|\leq\psi_1(t),~~t>0.
\]

For $|x|\in[\psi_1(t),\underline{h}(t)]$ and $t>0$,  our earlier calculations yield
\begin{equation*}
 \begin{aligned}
\underline{v}_t(x,t)-\epsilon\underline{u}(x,t)
%& \frac{C_1\delta_2\lambda}{\alpha-1}\lf(\frac{\underline h(t)-|x|}{\underline h(t)}\rr)^{\lambda-1}\underline{h}^{1-\alpha}(t)-\epsilon\delta_1\Psi_1\lf(\frac{\underline h(t)-|x|}{\underline h(t)}\rr)^{\lambda-1}\\
=&\ \frac{C_1\delta_2\lambda}{\alpha-1}\lf(\frac{\underline h(t)-|x|}{\underline h(t)}\rr)^{\lambda-1}\underline{h}^{1-\alpha}(t)-\epsilon\frac{\delta_1C_2\underline{h}^{1-\alpha}(t)}{C_2\underline{h}^{1-\alpha}(t)+1}\lf(\frac{\underline h(t)-|x|}{\underline h(t)}\rr)^{\lambda-1}\\
\leq&\ \lf(\frac{\underline h(t)-|x|}{\underline h(t)}\rr)^{\lambda-1}\underline{h}^{1-\alpha}(t)\lf[
\frac{C_1\delta_2\lambda}{\alpha-1}-\frac{\epsilon \delta_1C_2}{2}\rr]\leq0
 \end{aligned}\end{equation*}
 provided that
 \[
 C_2\geq\frac{2\delta_2C_1\lambda}{\delta_1\epsilon(\alpha-1)}.
 \]
Moreover, for such $|x|$, it follows from \eqref{primeu} that
\begin{equation*}
 \begin{aligned}
\underline{u}_t(x,t)\leq \frac{C_1C_2^{\lambda-1}
\delta_1(\lambda-1)}{\alpha-1}\frac{\underline{h}^{\lambda(1-\alpha)}(t)}
{(C_2\underline{h}^{1-\alpha}(t)+1)^{\lambda-2}}
\leq\frac{C_1C_2^{\lambda-1}
\delta_1(\lambda-1)}{\alpha-1}\underline{h}^{\lambda(1-\alpha)}(t).
 \end{aligned}\end{equation*}
And by \eqref{J_1dom}, for $x\in[\psi_1(t),\underline{h}(t)]$ and $t>0$, we have
\begin{equation}\label{J-1bound}
 \begin{aligned}
&\int^{\underline{h}(t)}_{-\underline{h}(t)}J_1(x-y)\underline{u}(y,t)\rd y=
\int^{\underline{h}(t)-x}_{-\underline{h}(t)-x}J_1(y)\underline{u}(x+y,t)\rd y\\
\geq&\int^{-\underline{h}(t)/4}_{-\underline{h}(t)/2}J_1(y)\underline{u}(x+y,t)\rd y
\geq\tilde{K}_1\delta_1\int^{-\underline{h}(t)/4}_{-\underline{h}(t)/2}|y|^{-\alpha}\lf(\frac{\underline h-|x+y|}{\underline h(t)}\rr)^{\lambda}\rd y\\
\geq &\ \frac{\tilde{K}_1\delta_1}{4^{\lambda}}\int^{-\underline{h}(t)/4}_{-\underline{h}(t)/2}|y|^{-\alpha}\rd y
=\frac{\tilde{K}_1\delta_1}{4^{\lambda}}\int^{\underline{h}(t)/2}_{\underline{h}(t)/4}y^{-\alpha}\rd y\\
=&\ \frac{\tilde{K}_1\delta_1(4^{\alpha-1}-2^{\alpha-1})}{(\alpha-1)4^{\lambda}}\underline{h}^{1-\alpha}(t).
 \end{aligned}\end{equation}
 Similarly, \eqref{J-1bound} holds for $x\in[-\underline{h}(t),-\psi_1(t)]$ since $J_1(x)$ and $\underline u(x,t)$ are symmetric in $x$ over $[-\underline{h}(t),\underline{h}(t)]$.
 Therefore, in view of $\lambda (1-\alpha)<1-\alpha<0$ and $\underline h(t)\geq \sigma_1^{1/(\alpha-1)}$, we have, for any $\epsilon>0$ and all $\sigma_1\gg1$,
 \begin{equation*}
 \begin{aligned}
\underline{u}_t(x,t)\leq \epsilon \int^{\underline{h}(t)}_{-\underline{h}(t)}J_1(x-y)\underline{u}(y,t)\rd y \ \forall t>0,\ x\in[-\underline{h}(t),-\psi_1(t)]\cup [\psi_1(t), \underline h(t)].
 \end{aligned}\end{equation*}

Now to complete the proof of \eqref{lower-inequal}, it remains to show, for $|x|\leq\underline{h}(t)$ and $t>0$,
\begin{equation}\begin{cases}\label{2-lower2.6}
		\displaystyle d_1\int^{\underline h(t)}_{-\underline h(t)}J_1(x-y)\underline{u}(y,t)dy-d_1\underline{u}
	-a_{11}\underline{u}+a_{12}\int^{\underline h(t)}_{-\underline h(t)}K(x-y)\underline{v}(y,t)dy\\[2mm]
	\hspace{8cm} \displaystyle\geq  \epsilon\lf(\underline u+\int^{\underline h(t)}_{-\underline h(t)}J_1(x-y)\underline{u}(y,t)dy\rr);\\
		\displaystyle d_2\int^{\underline h(t)}_{-\underline h(t)}J_2(x-y)\underline{v}(y,t)dy-d_2\underline{v}
	-a_{22}\underline{v}+G(\underline{u})
	\geq \epsilon\lf(\underline v+\underline u\rr). \end{cases}
\end{equation}

By \eqref{linear} and \eqref{linear1}, for $|x|\leq\psi_1(t)$ and $t>0$, we have
\begin{equation*}\begin{aligned}
-a_{11}\underline{u}+a_{12}\underline{v}\geq \rho (\underline{u}+\underline{v})\mbox{ and }
-a_{22}\underline{v}+G(\underline{u})\geq \rho (\underline{u}+\underline{v}).
\end{aligned}
\end{equation*}
When $|x|\in[\psi_1(t),\underline{h}(t)]$, by \eqref{linear1} and the fact that
\[
\underline u=s \delta_1,\ \  \underline v\leq s \delta_2
\]
 with $s:= \Psi_1(x,t)\lf(\frac{\underline h(t)-|x|}{\underline h(t)}\rr)^{\lambda-1}$, we obtain
\begin{equation*}\begin{aligned}
&-a_{22}\underline{v}+G(\underline{u})-\rho(\underline u+\underline v) \geq -a_{22} (s\delta_2)+G(s\delta_1)-\rho(s\delta_1+s\delta_2)\geq 0.
\end{aligned}
\end{equation*}

Next we apply Proposition \eqref{prop2.3}. By Example \eqref{example2} $(a)$ and $(b)$, for any given small $\epsilon>0$, there exists a 
 large $L(\epsilon)>0$ such that for $\underline{h}(t)>L(\epsilon)$ (which is guaranteed when $\sigma_1\gg 1$) and $|x|\leq \underline{h}(t)$, 
\begin{align*}
	&\int^{\underline h(t)}_{-\underline h(t)}J_1(x-y)\underline{u}(y,t)dy\geq (1-\epsilon)\underline u(x,t),\\
	&\int^{\underline h(t)}_{-\underline h(t)}J(x-y)\underline{v}(y,t)dy\geq (1-\epsilon)\underline v(x,t) \mbox{ for } J\in\{J_2, K\}.
\end{align*}
Therefore, for $|x|\leq \underline{h}(t)$ and $t>0$, 
\begin{equation*}\begin{aligned}
		 &d_2\int^{\underline h(t)}_{-\underline h(t)}J_2(x-y)\underline{v}(y,t)dy-d_2\underline{v}
	-a_{22}\underline{v}+G(\underline{u}) \\
	&\geq-d_2\epsilon \underline{v}+\rho(\underline{u}+\underline{v})\geq \frac{\rho}{2}(\underline{u}+\underline{v})\geq \epsilon(\underline{u}+\underline{v})
\end{aligned}
\end{equation*}
provided that $\epsilon>0$ is small enough such that $\rho>2d_2\epsilon$. This proves the second inequality in \eqref{2-lower2.6}.

We now prove the first inequality in \eqref{2-lower2.6}. For $|x|\leq\psi_1(t)$ and $t>0$,
\begin{equation*}\begin{aligned}
		& (d_1-\epsilon)\int^{\underline h(t)}_{-\underline h(t)}J_1(x-y)\underline{u}(y,t)dy-d_1\underline{u}
	-a_{11}\underline{u}+a_{12}\int^{\underline h(t)}_{-\underline h(t)}K(x-y)\underline{v}(y,t)dy,\\[2mm]
	&\geq  ( d_1-\epsilon)(1-\epsilon)\underline{u}-d_1\underline{u}+\rho(\underline{u}+\underline{v})-\epsilon a_{12}\underline{v}\geq \frac{\rho}{2}(\underline{u}+\underline{v})\geq  \epsilon \underline{u}
\end{aligned}
\end{equation*}
provided that $\rho\geq 2(d_1+1+a_{12})\epsilon$. 

For $|x|\in[\psi_1(t),\underline{h}(t)]$, we have, for any given $M>0$ and all $\sigma_1\gg 1$,
\begin{align*}
	\int^{\underline h(t)}_{-\underline h(t)}J_1(x-y)\underline{u}(y,t)dy\geq M \underline u(x,t) \mbox{ for }  t\geq 0,
\end{align*}
which is a consequence of \eqref{J-1bound} and the fact that
\begin{align*}
	\underline u =&\  \delta_1\frac{\underline h-\psi_1}{\underline h}\lf(\frac{\underline h-|x|}{\underline h}\rr)^{\lambda-1}
	\leq \delta_1	\lf(\frac{\underline h-\psi_1}{\underline h}\rr)^{\lambda}= \delta_1\lf(\frac{C_2\underline h^{1-\alpha}}{C_2\underline h^{1-\alpha}+1}\rr)^\lambda
	\leq \delta_1 C_2^{\lambda}\underline h^{\lambda(1-\alpha)}.
\end{align*}
Therefore, for $|x|\in [\psi_1(t), \underline h(t)]$ and $t>0$,
\begin{equation*}\begin{aligned}
		& (d_1-\epsilon)\int^{\underline h(t)}_{-\underline h(t)}J_1(x-y)\underline{u}(y,t)dy-d_1\underline{u}
	-a_{11}\underline{u}+a_{12}\int^{\underline h(t)}_{-\underline h(t)}K(x-y)\underline{v}(y,t)dy,\\[2mm]
	&\geq  ( d_1-\epsilon)M{\underline{u}}-d_1\underline{u}-a_{11}\underline u\geq   \epsilon \underline{u}
\end{aligned}
\end{equation*}
provided that $M>0$ is large enough such that $( d_1-\epsilon)M >d_1+a_{11}+\epsilon$. 
We have thus proved that \eqref{2-lower2.6}, and hence \eqref{lower-inequal}, holds for all $\sigma_1\gg1$ and $1\gg \epsilon>0$.

Finally, we show that the boundary and initial conditions also meet the requirement of the comparison principle.
Since spreading happens, we have
\[\lim_{t\rightarrow \infty}[-g(t)]=\lim_{t\to\infty} h(t)=\infty\]
and
\[
\lim_{t\rightarrow \infty}(u(x,t),v(x,t)) =(u^*,v^*) \mbox{ locally uniformly in }\R.
\]
Therefore, we can find some large $t_0>0$ such that,
\[
[-\underline{h}(0),\underline{h}(0)]\subset[g(t_0),h(t_0)],
\]
and for $x\in[-\underline{h}(0),\underline{h}(0)]$
\[
u(x,t_0)\geq\delta_1\geq\underline{u}(x,0)\mbox{ and }v(x,t_0)\geq\delta_2>\underline{v}(x,0)
\]
with our earlier choice of $(\delta_1,\delta_2)$.
In addition, we have $(\underline{u}(x,t),\underline{v}(x,t))=(0,0)$ for $x=\pm \underline{h}(t)$ and $t\geq0$. Therefore, we can apply the comparison principle (as in the proof of Lemma \ref{lemma1-2}) to obtain that
\[
-\underline h(t)\geq g(t_0+t),
		 \ \ \underline{h}(t)\leq h(t_0+t) \mbox{ for }t\geq0,
\]
which clearly implies the desired estimates for $g(t)$ and $h(t)$.
\end{proof}

\begin{lemma}\label{JIDOM2-LOWER}

Suppose that $(\mathbf{J})$ holds. If spreading occurs and \eqref{J_1dom} holds with $\alpha=2$, then there exists $D>0$ such that
\begin{equation*}
-g(t),h(t)\geq D\;t\ln t\mbox{ for all large } t>0.\\
\end{equation*}
\end{lemma}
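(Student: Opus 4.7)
The plan is to mirror the proof of Lemma \ref{alpha2}, swapping the roles of $u$ and $v$ throughout, since the dominating kernel is now $J_1$ (which drives the $u$-equation and the free boundary equations). Fix $\beta\in(0,1/2)$ and $\lambda>1/\beta$, set $\underline h(t):=C_1(t+\sigma_2)\ln(t+\sigma_2)$, $\underline g(t):=-\underline h(t)$, and take
$$\underline u(x,t):=\min\Big\{1,\Psi_2(x,t)\Big(\tfrac{\underline h(t)-|x|}{(t+\sigma_2)^{\beta}}\Big)^{\lambda-1}\Big\}\delta_1,\ \ \underline v(x,t):=\min\Big\{1,\Big(\tfrac{\underline h(t)-|x|}{(t+\sigma_2)^{\beta}}\Big)^{\lambda}\Big\}\delta_2,$$
with $\Psi_2$ and $\psi_2$ exactly as in Lemma \ref{alpha2} and $(\delta_1,\delta_2)$ satisfying \eqref{linear}--\eqref{linear1}. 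The reason to equip $\underline u$ (rather than $\underline v$) with the intricate $\Psi_2$-cutoff is that acceleration is now channelled through $J_1$ and the $u$-equation, so it is $\underline u_t$ that has to be dominated by $\int J_1\underline u$ near the free boundary, and hence $\underline u$ that must decay faster there.

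First I would verify the free boundary inequality for $\underline h'$. Discarding the $J_2$ contribution and using $\underline u\equiv\delta_1$ on $[0,\underline h(t)-(t+\sigma_2)^\beta]$, the same Fubini switch as in Lemma \ref{alpha2}, together with the lower bound $J_1(y)\ge\tilde K_1 y^{-2}$ from \eqref{J_1dom}, yields
$$\mu\int_{-\underline h(t)}^{\underline h(t)}\!\int_{\underline h(t)}^{\infty}\!J_1(x-y)\underline u(x,t)\,dy\,dx \ \ge\ \delta_1\mu\tilde K_1(1-\beta)\bigl[\ln(t+\sigma_2)+1\bigr]\ \ge\ \underline h'(t),$$
for small $C_1$ and large $\sigma_2$, and the companion inequality for $-\underline h'(t)$ follows by symmetry of the kernels and of $(\underline u,\underline v)$ in $x$.

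Next I would establish the two reaction--diffusion inequalities pointwise on $|x|\in[0,\underline h(t)]\setminus\{\underline h(t)-(t+\sigma_2)^\beta,\psi_2(t)\}$, following the three-step architecture of Lemma \ref{alpha2} with $u$ and $v$ swapped. The goal is to show $\underline v_t\le\epsilon\max\{\underline u,\underline v\}$ and $\underline u_t\le\epsilon\max\bigl\{\underline u,\int_{-\underline h}^{\underline h}J_1(x-y)\underline u(y,t)\,dy\bigr\}$ under the same parameter hierarchy as before. The one genuinely new estimate is the outer-zone lower bound $\int_{-\underline h}^{\underline h}J_1(x-y)\underline u(y,t)\,dy\gtrsim \underline h(t)^{-1}$ for $\psi_2(t)\le|x|\le\underline h(t)$, derived exactly as in \eqref{J-1bound} using \eqref{J_1dom} with $\alpha=2$; pairing this with the pointwise bound $\underline u_t\lesssim\bigl((t+\sigma_2)^{-\beta}\ln(t+\sigma_2)\bigr)^{\lambda}$ is precisely where the condition $\lambda\beta>1$ is used. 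The linear inequalities \eqref{linear}--\eqref{linear1} then close the $u$- and $v$-equations on the inner zone $|x|\le\psi_2(t)$ and, writing $\underline u=s\delta_1$, $\underline v\le s\delta_2$ for a suitable $s\in[0,1]$, on the boundary-decay zone $\psi_2(t)\le|x|\le\underline h(t)$, while the convolution lower bounds $\int J_i\underline u\ge(1-\epsilon)\underline u$ and $\int J_i\underline v\ge(1-\epsilon)\underline v$ for $J_i\in\{J_1,J_2,K\}$ follow from Example \ref{example2}(c) applied with $L(t)=\underline h(t)$, $\tilde\eta_1(t)=\underline h(t)/(t+\sigma_2)^\beta$ and $\tilde\eta_2(t)=\psi_2(t)/\underline h(t)$ or $1$.

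The main obstacle I anticipate is the delicate parameter balancing in the outer zone $\psi_2(t)<|x|<\underline h(t)$, where $\underline u_t$ must be controlled by $\int J_1\underline u$; it is the logarithmic divergence of $\int_1^R yJ_1(y)\,dy$ when $\alpha=2$ that both forces the $t\ln t$ scaling of $\underline h$ and constrains the parameters through $\lambda\beta>1$. With all pointwise inequalities in place, the conclusion will follow from the comparison principle of Remark 2.4 in \cite{WangDu-JDE} (the jumping discontinuities of $\underline u_t,\underline v_t$ along $|x|=\underline h(t)-(t+\sigma_2)^\beta$ and $|x|=\psi_2(t)$ being permissible by Remark 2.4 of \cite{dn2022}), after choosing $t_0$ large enough that $[-\underline h(0),\underline h(0)]\subset[g(t_0),h(t_0)]$ and $(u(\cdot,t_0),v(\cdot,t_0))\ge(\delta_1,\delta_2)$ on $[-\underline h(0),\underline h(0)]$.
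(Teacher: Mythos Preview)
Your proposal is correct and matches the paper's proof essentially step for step: the same lower solution $(\underline u,\underline v,-\underline h,\underline h)$ with the $\Psi_2$-cutoff placed on $\underline u$, the free-boundary inequality verified via the $J_1$-contribution and $\underline u\equiv\delta_1$ on the inner region, the bounds $\underline v_t\le C(\underline u+\underline v)$ and $\underline u_t\le C\bigl(\underline u+\int J_1\underline u\bigr)$ handled zone by zone with the outer-zone estimate $\int J_1\underline u\gtrsim \underline h(t)^{-1}$, the use of \eqref{linear}--\eqref{linear1} and Example~\ref{example2}(c), and the comparison principle to finish.
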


\begin{proof}
Similar to the proof of Lemma \ref{alpha2}, we will prove the desired inequalities by constructing an appropriate lower solution to problem \eqref{1}.
Fix constants $\beta\in(0,\frac 12)$ and $\lambda> \frac{1}{\beta}>2$. Then define
\begin{equation*}
\begin{cases}
\underline{h}(t):=C_1(t+\sigma_2)\ln (t+\sigma_2),& t\geq0,\\
	\underline{u}(x,t):=\min\big\{1,\Psi_2(t,x)\lf(\frac{\underline{h}(t)-|x|}{(t+\sigma_2)^{\beta}}\rr)^{\lambda-1}\big\}
	\delta_1, & x\in[-\underline{h}(t),\underline{h}(t)],~t\geq0,\\
	\underline{v}(x,t):=\min\big\{1,\lf(\frac{\underline{h}(t)-|x|}{(t+\sigma_2)^{\beta}}\rr)^{\lambda}\big\}
	\delta_2, & x\in[-\underline{h}(t),\underline{h}(t)],~t\geq0,
	\end{cases}
\end{equation*}
where
\begin{equation*}
	\Psi_2(x,t):=
	\begin{cases}
		\dd	\frac{\underline h(t)-|x|}{(t+\sigma_2)^{\beta}},&|x|\leq \psi_2(t),\\[3mm]
		\dd	\frac{\underline h(t)-\psi_2(t)}{(t+\sigma_2)^{\beta}},& |x|\geq \psi_2(t),
	\end{cases}\ \ \  \ \psi_2(t):=\dd\frac{\underline h(t)-C_3\ln (t+\sigma_2)}{C_2(t+\sigma_2)^{-1}+1},
\end{equation*}
$(\delta_1,\delta_2)$ is selected by \eqref{linear} and \eqref{linear1},  while $C_1$, $C_2$, $C_3$
and $\sigma_2$ are positive constants to be determined.

To show $(\underline u, \underline v, -\underline h, \underline h)$ forms a lower solution, we first check the inequalities satisfied by $\underline{h}'(t)$ and $-\underline{h}'(t)$ for $t>0$.
 We calculate,  using \eqref{J_1dom} when needed,
 \begin{equation*}\begin{aligned}
& \mu\int_{-\underline{h}(t)}^{\underline{h}(t)}\int^{+\infty}_{\underline{h}(t)}\Big(J_1(x-y)\underline{u}(x,t)+\rho J_2(x-y)\underline{v}(x,t)\Big)\rd y \rd x\\
\geq &\ \mu\int_{-\underline{h}(t)}^{\underline{h}(t)}\int^{+\infty}_{\underline{h}(t)}J_1(x-y)\underline{u}(x,t)\rd y \rd x\geq\mu\delta_1
\int_{0}^{\underline{h}(t)-(t+\sigma_2)^{\beta}}\int^{+\infty}_{\underline{h}(t)}J_1(x-y)\rd y \rd x\\
=&\ \mu\delta_1
\int_{-\underline{h}(t)}^{-(t+\sigma_2)^{\beta}}\int^{+\infty}_{0}J_1(x-y)\rd y \rd x
=\mu\delta_1\int^{\underline{h}(t)}_{(t+\sigma_2)^{\beta}}\int^{+\infty}_{x}J_1(y)\rd y \rd x\\
=&\ \mu\delta_1\lf[
\int^{\underline{h}(t)}_{(t+\sigma_2)^{\beta}}\int^{y}_{(t+\sigma_2)^{\beta}}+
\int_{\underline{h}(t)}^{+\infty}\int^{\underline{h}(t)}_{(t+\sigma_2)^{\beta}}\rr]J_1(y)\rd x\rd y
\geq \mu\delta_1\int^{\underline{h}(t)}_{(t+\sigma_2)^{\beta}}\int^{y}_{(t+\sigma_2)^{\beta}}J_1(y)\rd x\rd y\\
=&\ \mu\delta_1\int^{\underline{h}(t)}_{(t+\sigma_2)^{\beta}}J_1(y)[y-(t+\sigma_2)^{\beta}]\rd y
\geq\mu\delta_1 \tilde{K}_1\int^{\underline{h}(t)}_{(t+\sigma_2)^{\beta}}\frac{y-(t+\sigma_2)^{\beta}}{y^2}\rd y\\
=&\ \mu\delta_1 \tilde{K}_1\Big(\ln (\underline{h}(t))-\beta\ln(t+\sigma_2)+\frac{(t+\sigma_2)^{\beta}}{\underline{h}(t)}-1\Big)\\
\geq&\ \mu\delta_1 \tilde{K}_1\big[(1-\beta)\ln (t+\sigma_2)+1\big]
\geq C_1\big[\ln (t+\sigma_2)+1\big]=\underline{h}'(t),\\
 \end{aligned}\end{equation*}
provided that $\sigma_2\gg1$ and  $C_1\leq \mu\delta_1 \tilde{K}_1(1-\beta)$.
The same calculations can be applied to obtain the desired inequality for $-\underline{h}'(t).$

Now we prove that for $t>0$ and $|x|\in [0, \underline h(t)]\setminus\{ \underline{h}(t)-(t+\sigma_2)^{\beta},\psi_2(t)\}$, 
\begin{equation}\label{lowerJ1-1}
\begin{cases}
	\dd\underline u_t\leq d_1\int^{\underline h(t)}_{-\underline h(t)}J_1(x-y)\underline{u}(y,t)dy-d_1\underline{u}
	-a_{11}\underline{u}+a_{12}\int^{\underline h(t)}_{-\underline h(t)}K(x-y)\underline{v}(y,t)dy,\\[3mm]
	\dd\underline{v}_t\leq d_2\int^{\underline h(t)}_{-\underline h(t)}J_2(x-y)\underline{v}(y,t)dy-d_2\underline{v}
-a_{22}\underline{v}+G(\underline{u}),
\end{cases}
\end{equation}
where $\{\pm[ \underline{h}(t)-(t+\sigma_2)^{\beta}],\pm\psi_2(t)\}$ is the set of discontinuous points of $\underline{u}_t$ and $|x|=\underline{h}(t)-(t+\sigma_2)^{\beta}$ is the discontinuous point of $\underline{v}_t$.

Let us note that, for $\sigma_2\gg1 $, $\underline{h}(t)-(t+\sigma_2)^{\beta}<\psi_2(t)$ for all $t>0$.
Similar to the proof of Lemma \ref{J1-upper1}, we deduce \eqref{lowerJ1-1} via the following inequalities; namely, for some constant $M>0$,
\begin{equation}\label{lower-inequa2}
 \begin{cases}
\underline{u}_t(x,t)\leq M\displaystyle \lf(\underline{u}+\int_{-\underline{h}(t)}^{\underline{h}(t)}J_1(x-y)\underline{u}(y,t)dy\rr)\\
\hspace{1.3cm}\leq \displaystyle d_1\int^{\underline h(t)}_{-\underline h(t)}J_1(x-y)\underline{u}(y,t)dy-d_1\underline{u}
	-a_{11}\underline{u}+a_{12}\int^{\underline h(t)}_{-\underline h(t)}K(x-y)\underline{v}(y,t)dy;\\
\displaystyle\underline{v}_t(x,t)\leq M(\underline{v}+\underline{u})\leq d_2\int^{\underline h(t)}_{-\underline h(t)}J_2(x-y)\underline{v}(y,t)dy-d_2\underline{v}
	-a_{22}\underline{v}+G(\underline{u}),\\
 \end{cases}\end{equation}
where $t>0$ and $|x|\in [0, \underline h(t)]\setminus\{ \underline{h}(t)-(t+\sigma_2)^{\beta},\psi_2(t)\}$.

So we first prove, for such $t$ and $|x|$, and any $M>0$,
\begin{equation}\label{left-inequal-1}
\underline{u}_t(x,t)\leq M \lf(\underline{u}+\int_{-\underline{h}(t)}^{\underline{h}(t)}J_1(x-y)\underline{u}(y,t)dy\rr),~\underline{v}_t(x,t)\leq M(\underline{v}+\underline{u}).
\end{equation}
 
For $0\leq|x|<\underline{h}(t)-(t+\sigma_2)^{\beta}$, clearly
\[
(\underline{u}_t,\underline{v}_t)=(0,0) \mbox{ for }t>0,
\]
and so \eqref{left-inequal-1} holds trivially in this case.

For $\underline{h}(t)-(t+\sigma_2)^{\beta}\leq|x|<\psi_2(t)$, we have
\[
(\underline{u}(x,t),\underline{v}(x,t))=\lf(\frac{\underline{h}(t)-|x|}{(t+\sigma_2)^{\beta}}\rr)^{\lambda}
(\delta_1,\delta_2) \mbox{ for }t>0,
\]
and so,
for $t>0$ and $\sigma_2\gg1$,
\begin{equation*}
 \begin{aligned}
\underline{u}_t(x,t)&=\delta_1\lambda\lf(\frac{\underline{h}(t)-|x|}{(t+\sigma_2)^{\beta}}\rr)^{\lambda-1}\lf[
\frac{C_1(1-\beta)[\ln(t+\sigma_2)+1]}{(t+\sigma_2)^{\beta}}+\frac{\beta|x|}{(t+\sigma_2)^{\beta+1}}
\rr]\\
&\leq \delta_1\lambda\lf(\frac{\underline{h}(t)-|x|}{(t+\sigma_2)^{\beta}}\rr)^{\lambda-1}\lf[
\frac{C_1\ln(t+\sigma_2)}{(t+\sigma_2)^{\beta}}+\frac{|x|}{(t+\sigma_2)^{\beta+1}}
\rr],
 \end{aligned}\end{equation*}
and similarly,
\begin{equation}\label{lowerv}
 \begin{aligned}
\underline{v}_t(x,t)&
\leq \delta_2\lambda\lf(\frac{\underline{h}(t)-|x|}{(t+\sigma_2)^{\beta}}\rr)^{\lambda-1}\lf[
\frac{C_1\ln(t+\sigma_2)}{(t+\sigma_2)^{\beta}}+\frac{|x|}{(t+\sigma_2)^{\beta+1}}
\rr] .\\
 \end{aligned}\end{equation}
Therefore, 
\begin{equation*}
 \begin{aligned}
&\underline{u}_t(x,t)-M \underline{u}(x,t)\\
&\leq\delta_1\lambda\lf(\frac{\underline{h}(t)-|x|}{(t+\sigma_2)^{\beta}}\rr)^{\lambda-1}\lf[
\frac{C_1\ln(t+\sigma_2)}{(t+\sigma_2)^{\beta}}+\frac{|x|}{(t+\sigma_2)^{\beta+1}}\rr]-M\delta_1
\lf(\frac{\underline{h}(t)-|x|}{(t+\sigma_2)^{\beta}}\rr)^{\lambda}\\
&\leq \frac{\delta_1}{(t+\sigma_2)^{\beta}}\lf(\frac{\underline{h}(t)-|x|}{(t+\sigma_2)^{\beta}}\rr)^{\lambda-1}
\lf[2\lambda C_1\ln(t+\sigma_2)-\frac{M(C_1C_2+C_3)\ln(t+\sigma_2)}{C_2(t+\sigma_2)^{-1}+1}\rr]\\
&\leq  \frac{\delta_1}{(t+\sigma_2)^{\beta}}\lf(\frac{\underline{h}(t)-|x|}{(t+\sigma_2)^{\beta}}\rr)^{\lambda-1}
\lf[2\lambda C_1\ln(t+\sigma_2)-\frac{M(C_1C_2+C_3)\ln(t+\sigma_2)}{2}\rr]<0
\end{aligned}\end{equation*}
provided that $\sigma_2\gg1 $ and $C_1, C_2, C_3$ are chosen such that
\[
M>\frac{4\lambda C_1}{C_1C_2+C_3}.
\]
Similarly, with such choice of $\sigma_2$ and $C_1, C_2, C_3$,  we have
\[
\underline{v}_t(x,t)-M \underline{v}(x,t)\leq0 \mbox{ for }\underline{h}(t)-(t+\sigma_2)^{\beta}\leq |x|\leq \psi_2(t),~t>0.
\]

For $\psi_2(t)\leq |x|\leq\underline{h}(t)$, since \eqref{lowerv} still holds in this case, we see that
\begin{equation*}
 \begin{aligned}
&\underline{v}_t(x,t)-M \underline{u}(x,t)\\
&\leq\delta_2\lambda\lf(\frac{\underline{h}(t)-|x|}{(t+\sigma_2)^{\beta}}\rr)^{\lambda-1}\lf[
\frac{C_1\ln(t+\sigma_2)}{(t+\sigma_2)^{\beta}}+\frac{|x|}{(t+\sigma_2)^{\beta+1}}
\rr]-M\Psi_2(t,x)\lf(\frac{\underline{h}(t)-|x|}{(t+\sigma_2)^{\beta}}\rr)^{\lambda-1}
	\delta_1\\
&\leq  \frac{1}{(t+\sigma_2)^{\beta}}\lf(\frac{\underline{h}(t)-|x|}{(t+\sigma_2)^{\beta}}\rr)^{\lambda-1}
\lf[2\lambda\delta_2 C_1\ln(t+\sigma_2)-\frac{\delta_1 M(C_1C_2+C_3)\ln(t+\sigma_2)}{2}\rr]
<0,
\end{aligned}\end{equation*}
provided that $\sigma_2\gg1 $ and
\[
M>\frac{4\lambda \delta_1C_1}{\delta_2(C_1C_2+C_3)}.
\]
We have thus proved  the second inequality in \eqref{left-inequal-1}, and for the first inequality in \eqref{left-inequal-1}, it only remains to prove it for $\psi_2(t)\leq |x|\leq \underline{h}(t)$ and $t>0$.
For $|x|$ in this range, 
since
\begin{equation*}
 \begin{aligned}
 &\Psi_2(x,t)=(C_1C_2+C_3)\frac{\ln(t+\sigma_2)}{[C_2(t+\sigma_2)^{-1}+1](t+\sigma_2)^{\beta}}
 \leq (C_1C_2+C_3)\frac{\ln(t+\sigma_2)}{(t+\sigma_2)^{\beta}},
 \\
&(\Psi_2)_t(x,t)=(C_1C_2+C_3)\frac{\big[1-\beta\ln(t+\sigma_2)\big](t+\sigma_2)
+C_2+(1-\beta)\ln(t+\sigma_2)}{(t+\sigma_2)^{\beta+2}[C_2(t+\sigma_2)^{-1}+1]^2}<0
 \end{aligned}\end{equation*}
 when $\sigma_2\gg1$,
we further have
\begin{equation*}
 \begin{aligned}
\underline{u}_t(x,t)=&\ (\Psi_2)_t\lf(\frac{\underline{h}(t)-|x|}{(t+\sigma_2)^{\beta}}\rr)^{\lambda-1}\delta_1
\\
&\ +\Psi_2(\lambda-1)\lf(\frac{\underline{h}(t)-|x|}{(t+\sigma_2)^{\beta}}\rr)^{\lambda-2}\lf[
\frac{C_1(1-\beta)[\ln(t+\sigma_2)+1]}{(t+\sigma_2)^{\beta}}+\frac{\beta|x|}{(t+\sigma_2)^{\beta+1}}
\rr]\\
\leq&\ (\Psi_2)^{\lambda-1}(\lambda-1)\lf[
\frac{C_1\ln(t+\sigma_2)}{(t+\sigma_2)^{\beta}}+\frac{\underline{h}(t)}{(t+\sigma_2)^{\beta+1}}
\rr]\\
\leq&\ 2\delta_1\lambda C_1\left(C_1C_2+C_3\right)^{\lambda-1}\left(\frac{\ln (t+\sigma_2)}{(t+\sigma_2)^{\beta}}\right)^\lambda.
 \end{aligned}\end{equation*}

We next estimate $\int^{\underline{h}(t)}_{-\underline{h}(t)}J_1(x-y)\underline{u}(y,t)\rd y$.
By the symmetry of $J_1(x)$ and $\underline{u}(x, t)$ for $x\in [-\underline{h}(t),\underline{h}(t)]$, we only consider the case where $x\in[\psi_2(t),\underline{h}(t)]$  since the same argument can be applied to $x\in[-\underline{h}(t),-\psi_2(t)]$.
Due to
\[
\frac{\psi_2(t)}{\underline{h}(t)}=\frac{1-\frac{C_3}{C_1}(t+\sigma_2)^{-1}}{C_2(t+\sigma_2)^{-1}+1}\rightarrow 1 \mbox{ as }\sigma_2\rightarrow \infty \mbox{ uniformly for }t>0,
\]
we have, for $\sigma_2\gg1 $,  
\[
[-\underline{h}(t)/2,-\underline{h}(t)/4]\subset[-\underline{h}(t)-x,\underline{h}(t)-x] \mbox{ for } x\in[\psi_2(t),\underline{h}(t)]
\]
and
\[
x+y\in[\underline{h}(t)/4,3\underline{h}(t)/4] \mbox{ for } x\in[\psi_2(t),\underline{h}(t)] \mbox{ and }y\in[-\underline{h}(t)/2,-\underline{h}(t)/4].
\]
It follows that, for $x\in[\psi_2(t),\underline{h}(t)]$,
\begin{equation}\label{2-J-1bound}
 \begin{aligned}
&\int^{\underline{h}(t)}_{-\underline{h}(t)}J_1(x-y)\underline{u}(y,t)\rd y=
\int^{\underline{h}(t)-x}_{-\underline{h}(t)-x}J_1(y)\underline{u}(x+y,t)\rd y\\
\geq&\int^{-\underline{h}(t)/4}_{-\underline{h}(t)/2}J_1(y)\underline{u}(x+y,t)\rd y
\geq \tilde{K}_1\delta_1\int^{-\underline{h}(t)/4}_{-\underline{h}(t)/2}y^{-2} \rd y=\frac{2\tilde{K}_1\delta_1}{\underline{h}(t)}=\frac{2\tilde{K}_1\delta_1}{C_1(t+\sigma_2)\ln (t+\sigma_2)},
 \end{aligned}\end{equation}
where we have used $\underline{u}(x+y,t)=\delta_1$ for $x+y\leq 3\underline{h}(t)/4<\underline{h}(t)-(t+\sigma_2)^{\beta}$.
Thus we have, for $\psi_2(t)\leq |x|\leq \underline{h}(t)$ and $t>0$,
\begin{equation*}
 \begin{aligned}
&\underline{u}_t(x,t)-M\int^{\underline{h}}_{-\underline{h}}J_1(x-y)\underline{u}(y,t)\rd y\\
\leq
 &\ 2C_1\delta_1\lambda\left(C_1C_2+C_3\right)^{\lambda-1}\left(\frac{\ln (t+\sigma_2)}{(t+\sigma_2)^{\beta}}\right)^{\lambda}-
 \frac{2M\tilde{K}_1\delta_1}{C_1(t+\sigma_2)\ln (t+\sigma_2)}
 <0
 \end{aligned}\end{equation*}
provided that $\lambda\beta>1$ and  $\sigma_2\gg 1$. We have now completed the proof of the first inequality in \eqref{left-inequal-1}, keeping in mind that it is possible to choose $C_1, C_2$ and $C_3$ to meet all the stated requirements.

Next we prove the remaining inequalities in \eqref{lower-inequa2}.
By Proposition \ref{prop2.3} and Example \ref{example2} $(c)$, for any small $\epsilon>0$, there exists a large  $L(\epsilon)>0$ such that
for $\sigma_2\gg L(\epsilon)$, $x\in[-\underline{h}(t),\underline{h}(t)]$ and $t>0$, we have
\begin{align*}
	&\int^{\underline h(t)}_{-\underline h(t)}J_1(x-y)\underline{u}(y,t)dy\geq (1-\epsilon)\underline u(x,t),\\
	&\int^{\underline h(t)}_{-\underline h(t)}J(x-y)\underline{v}(y,t)dy\geq (1-\epsilon)\underline v(x,t) \mbox{ for } J\in\{J_2, K\}.
\end{align*}
Moreover, for $0\leq |x|<\psi_2(t)$ and $t>0$, it follows from \eqref{linear1} that
\[
-a_{11}\underline{u}+a_{12}\underline{v}\geq \rho(\underline{u}+\underline{v} )\mbox{ and }
-a_{22}\underline{v}+G(\underline{u})\geq\rho(\underline{u}+\underline{v} ).
\]
Therefore, for such $|x|$, $t$ and  $\tilde{M}\in (0, \min\{\frac{\rho-2d_1\epsilon}{4},d_1\})$ we have
\begin{equation*}\begin{aligned}
		& (d_1-\tilde{M})\int^{\underline h(t)}_{-\underline h(t)}J_1(x-y)\underline{u}(y,t)dy-d_1\underline{u}
	-a_{11}\underline{u}+a_{12}\int^{\underline h(t)}_{-\underline h(t)}K(x-y)\underline{v}(y,t)dy\\[2mm]
	&\geq  ( d_1-\tilde{M})(1-\epsilon)\underline{u}-d_1\underline{u}-\epsilon a_{12}\underline v+\rho(\underline{u}+\underline{v})\geq \frac{\rho}{2}(\underline{u}+\underline{v})\geq  \tilde{M}\underline{u},
\end{aligned}
\end{equation*}
if $\epsilon>0$ is small enough. This proves the desired inequality for $\underline u$ with $M=\tilde M$ for the case $0\leq |x|<\psi_2(t)$.

Moreover, for such $\tilde M$, $\epsilon$ and $|x|$, 
\begin{equation*}\begin{aligned}
		& d_2\int^{\underline h(t)}_{-\underline h(t)}J_2(x-y)\underline{v}(y,t)dy-d_2\underline{v}
	-a_{22}\underline{v}+G(\underline u) \\[2mm]
	&\geq  -d_2\epsilon\underline{v}+\rho(\underline{u}+\underline{v})\geq \frac{\rho}{2}(\underline{u}+\underline{v})\geq  \tilde{M}(\underline{u}+\underline v).
\end{aligned}
\end{equation*}
This proves the desired inequality for $\underline v$ with $M=\tilde M$ for the case $0\leq |x|<\psi_2(t)$.

Now we prove these inequalities for the case $\psi_2(t)\leq |x| \leq \underline{h}(t)$, where with
\[
s:=\min\Big\{1,\Psi_2(x,t)\lf(\frac{\underline{h}(t)-|x|}{(t+\sigma_2)^{\beta}}\rr)^{\lambda-1}\Big\}
\]
we have $\underline u=s\delta_1,\ \underline v\leq s\delta_2$, and therefore, by \eqref{linear1}, 
\begin{equation*}
-a_{22}\underline{v}+G(\underline{u})-\rho(\underline{u}+\underline{v})\geq -a_{22}s\delta_2+G(s\delta_1)-\rho(s\delta_1+s\delta_2)\geq 0.
\end{equation*}
Hence
\begin{equation*}\begin{aligned}
		 &d_2\int^{\underline h(t)}_{-\underline h(t)}J_2(x-y)\underline{v}(y,t)dy-d_2\underline{v}
	-a_{22}\underline{v}+G(\underline{u}) \\
	&\geq-d_2\epsilon \underline{v}+\rho(\underline{u}+\underline{v})\geq \frac{\rho}{2}(\underline{u}+\underline{v})\geq \tilde{M}(\underline{u}+\underline{v}),
\end{aligned}
\end{equation*}
for $\psi_2(t)\leq |x| \leq \underline{h}(t)$, $t>0$ with $\epsilon>0$ small and $\tilde M>0$ as before.
Moreover, for such $|x|$ we have
\[
\underline{u}(x,t)\leq \delta_1 \lf(\frac{\underline{h}(t)-\psi_2(t)}{(t+\sigma_2)^{\beta}}\rr)^{\lambda}\leq  \delta_1\lf(\frac{(C_1C_2+C_3)\ln(t+\sigma_2)}{(t+\sigma_2)^{\beta}}\rr)^{\lambda},
\]
and by \eqref{2-J-1bound},
\[
\int^{\underline{h}(t)}_{-\underline{h}(t)}J_1(x-y)\underline{u}(y,t)\rd y\geq \frac{2\tilde{K}_1\delta_1}{C_1(t+\sigma_2)\ln(t+\sigma_2)}.
\]
It follows that
\begin{equation*}\begin{aligned}
		& (d_1-\tilde{M})\int^{\underline h(t)}_{-\underline h(t)}J_1(x-y)\underline{u}(y,t)dy-(d_1+\tilde{M})\underline{u}
	-a_{11}\underline{u}+a_{12}\int^{\underline h(t)}_{-\underline h(t)}K(x-y)\underline{v}(y,t)dy\\
&\geq (d_1-\tilde{M})\int^{\underline h(t)}_{-\underline h(t)}J_1(x-y)\underline{u}(y,t)dy-(d_1+a_{11}+\tilde{M})\underline{u}\geq0\\
\end{aligned}
\end{equation*}
due to $\lambda\beta>1$  and $\sigma_2\gg1$. 

We have now proved all the  inequalities in \eqref{lower-inequa2} with $M=\tilde M$, and
hence \eqref{lowerJ1-1} holds. 
It remains to check the initial and  boundary conditions for $(\underline u, \underline v, -\underline h, \underline h)$ to be a lower solution, which are similar to the proof of Lemma \ref{alpha2}.
Since spreading happens to \eqref{1}, we have
\[\lim_{t\rightarrow \infty}(g(t),h(t))=(-\infty, \infty)\]
and 
\[
\lim_{t\rightarrow \infty}(u(x,t),v(x,t)) =(u^*,v^*) \mbox{ locally uniformly in }\R.
\]
Therefore, we can find a sufficiently large $T_0>0$ such that,
\[
[-\underline{h}(0),\underline{h}(0)]\subset[g(T_0),h(T_0)],
\]
and  for $x\in[-\underline{h}(0),\underline{h}(0)]$,
\[
u(x,T_0)\geq\delta_1\geq\underline{u}(x,0)\mbox{ and }v(x,T_0)\geq\delta_2>\underline{v}(x,0).
\]
Clearly $(\underline{u}(x,t),\underline{v}(x,t))=(0,0)$ for $x=\pm \underline{h}(t)$ and $t\geq0$. Therefore, we can apply the comparison principle to obtain that
\[
-\underline h(t)\geq g(T_0+t),
		 \ \ \underline{h}(t)\leq h(T_0+t) \mbox{ for }t\geq0,
\]
which clearly implies the desired inequality.
\end{proof}

\begin{lemma}\label{upper-J1dom}
Suppose that $(\mathbf{J})$ is satisfied. If spreading happens and \eqref{J_1dom} holds with $\alpha\in(1,2]$,
then there exists $ \tilde D:=\tilde D(\alpha)$ such that
\begin{equation*}
-g(t),\ h(t)\leq \begin{cases} \tilde D\,t^{\frac{1}{\alpha-1}} &\mbox{ if }\alpha\in(1,2);\\
\tilde D\, t\ln t &\mbox{ if }\alpha=2.
\end{cases}
\end{equation*}

\end{lemma}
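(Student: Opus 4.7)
The plan is to mirror the proof of Lemma~\ref{lemma-upper}, since the dominance of $J_1$ over $J_2$ here plays exactly the role that the dominance of $J_2$ played there. For $\alpha\in(1,2)$, I propose the same upper-solution ansatz
\[
\bar h(t):=(Ct+\sigma)^{\frac{1}{\alpha-1}},\quad \bar g(t):=-\bar h(t),\quad \bar u(x,t)\equiv Mu^*,\quad \bar v(x,t)\equiv Mv^*,
\]
where $M>1$ is chosen so that $Mu^*\geq\|u_0\|_\infty$ and $Mv^*\geq\|v_0\|_\infty$, and the positive constants $C,\sigma$ will be determined. The reaction inequalities $-a_{11}\bar u+a_{12}\bar v\leq 0$ and $-a_{22}\bar v+G(\bar u)\leq 0$ follow from the equilibrium identities for $(u^*,v^*)$ together with the monotonicity of $G(z)/z$ in $(G2)$; since $\bar u_t=\bar v_t=0$, the two differential inequalities for $(\bar u,\bar v)$ hold trivially.

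The essential step is to verify
\[
\bar h'(t)\geq \mu\int_{-\bar h(t)}^{\bar h(t)}\int_{\bar h(t)}^{\infty}\bigl[J_1(x-y)\bar u+\rho J_2(x-y)\bar v\bigr]\,dy\,dx.
\]
Using $J_2\leq \tilde K_3 J_1$ from \eqref{J_1dom}, the right-hand side is at most $\mu(\bar u+\rho\tilde K_3\bar v)\int_{-\bar h}^{\bar h}\int_{\bar h}^{\infty}J_1(x-y)\,dy\,dx$. The change-of-variable and order-of-integration argument used in \eqref{3.24} rewrites this double integral as
\[
\int_{0}^{2\bar h(t)}y\,J_1(y)\,dy+2\bar h(t)\int_{2\bar h(t)}^{\infty}J_1(y)\,dy,
\]
and the pointwise bound $J_1(y)\leq \tilde K_2|y|^{-\alpha}$ for $|y|\geq 1$ from \eqref{J_1dom} gives an estimate of the form $\tilde K_2\bigl[1+\tfrac{2^{2-\alpha}}{(2-\alpha)(\alpha-1)}\bar h(t)^{2-\alpha}\bigr]$. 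Comparing with $\bar h'(t)=\frac{C}{\alpha-1}\bar h(t)^{2-\alpha}$ shows the inequality holds once $C$ is sufficiently large and $\sigma\gg 1$. By symmetry of $J_i$ and $(\bar u,\bar v)$ in $x$, the analogous inequality for $-\bar h'(t)$ is automatic.

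For $\alpha=2$, the same construction works with $\bar h(t):=(Ct+\sigma)\ln(Ct+\sigma)$: the integral estimate becomes $\tilde K_2[\,2+\ln(2\bar h(t))\,]$, which is dominated by $\bar h'(t)=C[\ln(Ct+\sigma)+1]$ for $C$ large. Enlarging $\sigma$ so that $\bar h(0)\geq h_0$ ensures $\bar u(x,0)\geq u_0(x)$ and $\bar v(x,0)\geq v_0(x)$ on $[-h_0,h_0]$, and the comparison principle from \cite{WangDu-JDE} then yields $[g(t),h(t)]\subset[-\bar h(t),\bar h(t)]$ for all $t\geq 0$, hence the stated bound. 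The only step that needs care is the power-type tail estimate of $J_1$ in both regimes, but since $J_1$ satisfies precisely the same decay as $J_2$ did in Lemma~\ref{lemma-upper}, I expect no substantive new obstacle: the whole argument should go through by substituting $J_1$ for $J_2$ and invoking \eqref{J_1dom} in place of \eqref{J_1J_2} throughout.
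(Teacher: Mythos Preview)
Your proposal is correct and follows essentially the same approach as the paper's own proof: the paper likewise uses the constant upper solution $(\bar u,\bar v)=(Mu^*,Mv^*)$ with $\bar h(t)=(Ct+\sigma)^{1/(\alpha-1)}$ (respectively $(Ct+\sigma)\ln(Ct+\sigma)$ for $\alpha=2$), bounds $J_2$ by $\tilde K_3 J_1$ via \eqref{J_1dom}, and then repeats the tail-integral estimate from \eqref{3.24} with $J_1$ in place of $J_2$. The only cosmetic difference is that the paper uses the cruder constant $\mu(1+\rho\tilde K_3)(\bar u+\bar v)$ where you write $\mu(\bar u+\rho\tilde K_3\bar v)$.
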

\begin{proof}
We will construct suitable upper solutions to \eqref{1} for $\alpha\in(1,2)$ and $\alpha=2$, respectively.

We first consider the case  $\alpha\in(1,2)$ and define 
\begin{equation*}
\begin{aligned}
&\bar{h}(t)=(Ct+\sigma)^{\frac{1}{\alpha-1}},~~\bar{g}(t)=-\bar{h}(t),\\
&\bar{u}(x,t)=M u^*, \mbox{ for }x\in[-\bar{h}(t),\bar{h}(t)] \mbox{ and }t>0,\\
&\bar{v}(x,t)=M v^*,\mbox{ for }x\in[-\bar{h}(t),\bar{h}(t)]\mbox{ and }t>0,\\
\end{aligned}
\end{equation*}
where $C,\sigma>0$ are constants to be determined, and $M>1 $ satisfies $Mu^*\geq\|u_0\|_{\infty}$ and $Mv^*\geq\|v_0\|_{\infty}$.

We first prove the desired inequalities for $-\bar{h}'(t)$ and $\bar{h}'(t)$. By \eqref{J_1dom} and simple calculations we obtain
\begin{align*}
&\mu\int_{-\bar{h}(t)}^{\bar{h}(t)}\int^{+\infty}_{\bar{h}(t)}\Big(J_1(x-y)\bar{u}(x,t)+\rho J_2(x-y)\bar{v}(x,t)\Big) \rd y\rd x\\
\leq&\ \mu(\bar u+ \bar v)\int_{-2\bar{h}(t)}^{0}\int^{+\infty}_{0}[J_1(x-y)+\rho J_2(x-y)]\rd y\rd x\\
=&\ \mu(\bar u+ \bar v)\int^{2\bar{h}(t)}_{0}\int^{+\infty}_{x}[J_1(y)+\rho J_2(y)]\rd y\rd x\\
\leq &\ \mu
(1+\rho\tilde{K}_3)(\bar u+ \bar v)\int^{2\bar{h}(t)}_{0}\int^{+\infty}_{x}J_1(y)\rd y\rd x\\
= &\ \mu(1+\rho\tilde{K}_3)(\bar u+ \bar v)
\lf(\int^{2\bar{h}(t)}_{0}\int^{y}_0+\int^{+\infty}_{2\bar{h}(t)}\int^{2\bar{h}(t)}_0\rr)J_1(y)\rd x\rd y\\
=&\ \mu(1+\rho\tilde{K}_3)(\bar u+ \bar v)
\lf(\int^{2\bar{h}(t)}_{0}yJ_1(y)\rd y+\int^{+\infty}_{2\bar{h}(t)}2\bar{h}(t)J_1(y)\rd y\rr)\\
\leq&\ \mu
(1+\rho\tilde{K}_3)\tilde{K}_2(\bar u+ \bar v)
\lf(\int^1_0\rd y+\int^{2\bar{h}(t)}_{1}  y^{1-\alpha}\rd y+\int^{+\infty}_{2\bar{h}(t)}2\bar{h}(t)y^{-\alpha}\rd y \rr)\\
=&\ \mu(1+\rho\tilde{K}_3)\tilde{K}_2(\bar u+ \bar v)\lf(1+\frac{1}{2-\alpha}\lf[2^{2-\alpha}(Ct+\sigma)^{\frac{2-\alpha}{\alpha-1}}-1\rr]+\frac{2^{2-\alpha}}{\alpha-1}
(Ct+\sigma)^{\frac{2-\alpha}{\alpha-1}}\rr)\\
\leq &\ \frac C{\alpha-1}(Ct+\sigma)^{\frac{2-\alpha}{\alpha-1}}=\bar h'(t)
\end{align*}
provided that $C$ and $\sigma$ are sufficiently large.
Since $J_1(x)$ and $(\bar u, \bar v)$ are even in $x\in[-\bar h(t),\bar h(t)]$, the same argument gives that
\[
-\bar{h}'(t)\leq-\mu \int_{-\bar{h}(t)}^{\bar{h}(t)}\int_{-\infty}^{-\bar{h}(t)}\Big(J_1(x-y)\bar{u}(x,t)+\rho J_2(x-y)\bar{v}(x,t)\Big)dydx.
\]

Next we prove the desired inequalities for $\bar u$ and $\bar v$. By the definition  of $(\bar u,\bar v)$, clearly
\begin{align*}
&d_1\int^{\bar{h}(t)}_{-\bar{h}(t)}J_1(x-y)\bar u(y,t)dy-d_1\bar u-a_{11}\bar u+
a_{12}\int^{\bar{h}(t)}_{-\bar{h}(t)}K(x-y)\bar v(y,t)dy\\
& \leq d_1\bar u-d_1\bar u -a_{11}\bar u+a_{12}\bar v=0= \bar u_t,\\
&  d_2\int^{\bar{h}(t)}_{-\bar{h}(t)}J_2(x-y)\bar v(y,t)dy-d_2\bar v-a_{22}\bar v+G(\bar u)
\leq d_2\bar v-d_2\bar v -a_{22}\bar v+G (\bar u)\\
&=-a_{22}M v^*+G(Mu^*)\leq M[-a_{22}v^*+G(u^*)]=0= \bar v_t,
\end{align*}
where we have used the assumption $(G2)$.

Since $\sigma\gg1$ we have
\[
\bar{h}(0)=\sigma^{\frac{1}{\alpha-1}}\geq h_0 \mbox{ for }\alpha\in(1,2).
\]
By definition,
\[
\bar u(x,0)\geq u(x,0) \mbox{ and }\bar v(x,0)\geq v(x,0) \mbox{ for }x\in[-h_0,h_0],
\]
and
\[
\bar u(x,t)>0,~\bar v(x,t)> 0 \mbox{ for }x=\pm \bar h(t).
\]
Hence we can use the comparison principle to conclude that
\begin{equation*}
h(t)\leq \bar h(t) \mbox{ and }g(t)\geq -\bar h(t)\mbox{ for }t\geq0,
\end{equation*}
which yields the desired inequality for the case $\alpha \in (1,2)$.

For the case $\alpha=2$, we define
\begin{equation*}
\begin{aligned}
&\bar{h}(t):=(Ct+\sigma)\ln(Ct+\sigma),~~\bar{g}(t):=-\bar{h}(t), ~t>0\\
\end{aligned}
\end{equation*}
with positive constants $C$ and $\sigma$ to be determined, and $(\bar u,\bar v)$ the same as in the case $\alpha\in(1,2).$

It suffices to prove that the desired inequality  for $ \bar{h}(t)$, since the other desired inequalities can be proved in exactly the same way as for $\alpha\in(1,2).$
We have 
\begin{align*}
&\mu\int_{-\bar{h}(t)}^{\bar{h}(t)}\int^{+\infty}_{\bar{h}(t)}\Big(J_1(x-y)\bar{u}(x,t)+\rho J_2(x-y)\bar{v}(x,t)\Big) \rd y\rd x\\
\leq &\ \mu
(1+\rho\tilde{K}_3)(\bar u+ \bar v)\int^{2\bar{h}(t)}_{0}\int^{+\infty}_{x}J_1(y)\rd y\rd x\\
\leq&\ \mu(1+\rho\tilde{K}_3)(\bar u+ \bar v)
\lf(\int^{2\bar{h}(t)}_{0}yJ_1(y)\rd y+\int^{+\infty}_{2\bar{h}(t)}2\bar{h}(t)J_1(y)\rd y\rr)\\
\leq&\ \mu
(1+\rho\tilde{K}_3)\tilde{K}_2(\bar u+ \bar v)
\lf(\int^1_0\rd y+\int^{2\bar{h}(t)}_{1}  y^{-1}\rd y+\int^{+\infty}_{2\bar{h}(t)}2\bar{h}(t)y^{-2}\rd y \rr)\\
=&\ \mu(1+\rho\tilde{K}_3)\tilde{K}_2(\bar u+ \bar v)\lf(2+\ln [2(Ct+\sigma)\ln(Ct+\sigma)] \rr)\\
=&\ \mu(1+\rho\tilde{K}_3)\tilde{K}_2(\bar u+ \bar v)\lf(2+\ln 2 +\ln(Ct+\sigma)+\ln\ln(Ct+\sigma)\rr)\\
\leq &\ C\ln(Ct+\sigma)+C=\bar h'(t),
\end{align*}
if $C$ and $\sigma$ are sufficiently large. We may then apply the comparison principle to obtain the desired result.
\end{proof}
%-------------------------------------------------------------------------

\end{document}